\documentclass[11pt]{article}
\usepackage{amsmath,amsfonts,amsthm,amscd,amssymb,graphicx}
\usepackage{subfigure}

\numberwithin{equation}{section}

\usepackage{hyperref}
\usepackage{verbatim} 

\setlength{\evensidemargin}{0in} \setlength{\oddsidemargin}{0in}
\setlength{\textwidth}{6.5in} \setlength{\topmargin}{0in}
\setlength{\textheight}{8in}
%

\newtheorem{theorem}{Theorem}[section]

\newtheorem{lemma}[theorem]{Lemma}

\newtheorem{proposition}[theorem]{Proposition}
\newtheorem{prop}[theorem]{Proposition}

\newtheorem{remark}[theorem]{Remark}

\newcommand{\RR}{\mathbb{R}}

\newcommand{\CC}{\mathbb{C}}

\newcommand{\ZZ}{{\mathbb Z}}
\newcommand{\TT}{{\mathbb T}}


\def\beq{\begin{equation}}
\def\eeq{\end{equation}}
\def\bb1{{1\!\!1}}
%

%




%

%

%

\def\R{\mbox{Re }}

%



\def\triangle{\Delta}
\def\bega{\begin{aligned}}
\def\enda{\end{aligned}}

\begin{document}

\title{The inviscid limit of Navier-Stokes equations for analytic data on the half-space} 

\author{Toan T. Nguyen\footnotemark[1] \and Trinh T. Nguyen\footnotemark[1]
}

\maketitle

\renewcommand{\thefootnote}{\fnsymbol{footnote}}

\footnotetext[1]{Department of Mathematics, Penn State University, State College, PA 16803. Emails: nguyen@math.psu.edu; txn5114@psu.edu.}



\begin{abstract}

In their classical work \cite{SammartinoCaflisch1,SammartinoCaflisch2}, Caflisch and Sammartino proved the inviscid limit of the incompressible Navier-Stokes equations for well-prepared data with analytic regularity in the half-space. Their proof is based on the detailed construction of Prandtl's boundary layer asymptotic expansions. In this paper, we give a direct proof of the inviscid limit for general analytic data without having to construct Prandtl's boundary layer correctors. Our analysis makes use of the boundary vorticity formulation and the abstract Cauchy-Kovalevskaya theorem on analytic boundary layer function spaces that capture unbounded vorticity. 

\end{abstract}



\section{Introduction}
In this paper, we are interested in the inviscid limit of the Navier-Stokes equations for incompressible fluids
\beq \label{NS}
\begin{aligned} 
\partial_{t}u+u\cdot\nabla u+\nabla p&=\nu\triangle u
\\
\nabla\cdot u&=0
\end{aligned}
\eeq 
posed on the half space $\TT \times \RR_+$, with the classical no-slip boundary condition
\beq \label{NS-BC}
\begin{aligned}
u_{\vert_{z=0}} &=0 .
\end{aligned}
\eeq 
In the inviscid limit: $\nu \to 0$, one would expect that solutions $u^\nu$ converge to solutions of the corresponding Euler equations for incompressible fluids.

However, the inviscid limit problem for the no-slip boundary condition \eqref{NS-BC} is open due to the appearance of boundary layers and the creation of unbounded vorticity near the boundary. On the one hand, the friction causes the fluid to stick to the boundary, the no-slip condition \eqref{NS-BC}. On the other hand, the inviscid flow allows the fluid to slip along the boundary. The rapid change of the tangential velocity on the boundary gives rise to transition or boundary layers in the small viscosity limit. As a consequence, the vorticity is of order $\delta_\nu^{-1}$ on the boundary, in which $\delta_\nu$ denotes the thickness of boundary layers, within which the velocity changes rapidly. This leads to a possible large energy production, due to the large convection $u \cdot \nabla u$, within the boundary layers. 

In his seminal paper \cite{Pra:1904}, Prandtl postulated a boundary layer ansatz that balances the inertial and viscous forces in the dynamics of \eqref{NS}, leading to the well-known Prandtl's boundary layer equation, a simplification of complicated Navier-Stokes equations in a thin layer of thickness $\delta_\nu = \sqrt \nu$. In the half space, the Prandtl's ansatz reads
\beq \label{Ansatz}
u(t,x,z) =   u_E(t,x,z) + u_P \Bigl(t,x, {z \over \sqrt{\nu}}\Bigr) + o(1)_{L^\infty}
\eeq
where $u_E$ solves the corresponding Euler equations, and $u_P$ is introduced to correct the no-slip boundary condition of the Navier-Stokes equations which does not satisfy by Euler solutions $u_E$. Here, the velocity field  $u_E(t,x,0) + u_P(t,x, Z)$ solves the well-known Prandtl's boundary layer equation, and the remainder $o(1)_{L^\infty}$ is expected to converge to zero in $L^\infty$ as $\nu \to 0$. Formally, it is even possible to write a higher order asymptotic
expansion for $u^\nu$ in terms of powers of $\sqrt{\nu}$. Since then, the Prandtl boundary layers have been intensively studied in the mathematical literature. Notably, solutions to the Prandtl equations 
have been constructed for monotonic data (\cite{Oleinik, Oleinikbook, Alex, MW}) or data with Gevrey or analytic regularity (\cite{SammartinoCaflisch1, GVMasmoudi15, VV}, among others). 
In the case of non-monotonic data with Sobolev regularity, the Prandtl boundary layer equations are ill-posed (\cite{GVDormy,GVN1,GN1}). 

The validity of Prandtl's Ansatz \eqref{Ansatz}, and hence the inviscid limit, were established in \cite{SammartinoCaflisch1,SammartinoCaflisch2}
for well-prepared data with analytic regularity (precisely, analytic data that initially satisfy the boundary layer expansion \eqref{Ansatz} with remainder of order $\sqrt\nu$). A similar result is also obtained in \cite{Maekawa2,Wang}. 
The Ansatz \eqref{Ansatz}, with a specific boundary layer shear profile, has been recently justified for data with Gevrey regularity \cite{GVMM}. When  only data with Sobolev regularity are assumed, except for data with special symmetry data with special symmetry (\cite{Maz1,Maz5} and the references therein), the asymptotic expansions \eqref{Ansatz} are false due to the strong instability of boundary layers (\cite{Grenier00CPAM,GrN4,GrN5,GrN2,GrN3}), including those boundary layers that are spectrally stable to the Euler equations. Finally, we mention that the inviscid limit holds for other types of boundary conditions other than the no-slip one \eqref{NS-BC}; see, for instance, \cite{IftimieSueur,MasRou1,Rugo}, and also the review papers \cite{BT,MaeM}. In particular, \cite{MasRou1} proves the inviscid limit for Navier-type boundary conditions via energy estimates without dealing directly with the boundary layer asymptotic expansions.   

On the other hand, back to the no-slip case \eqref{NS-BC}, Kato \cite{Kato} constructed a much thinner boundary layer of thickness $\delta_\nu = \nu$, as compared to the Prandtl's thickness $\delta_\nu = \sqrt \nu$, aiming precisely to control the large convection near the boundary. This leads to his well-known criterium, which asserts that the inviscid limit holds in the energy norm if and only if the energy dissipation near the boundary vanishes in the inviscid limit: precisely
\begin{equation}\label{Kato} \nu \int_0^T \iint_{\{z\lesssim \nu\}} |\nabla u|^2 \; dxdzdt \quad\longrightarrow \quad 0, \qquad \mbox{as}\quad \nu \to 0.\end{equation}
See also its many variants: for instance, \cite{BT,ConstVV, CKV,ConVV17,Kel}, among others.

In this paper, we give a direct proof of the inviscid limit for data with analytic regularity without having to construct Prandtl's boundary layer correctors. The proof relies on the boundary vorticity formulation, the pointwise bounds on the Green function, and the abstract Cauchy-Kovalevskaya theorem on boundary layer function spaces.

\subsection{Boundary vorticity formulation}

We shall work with the boundary vorticity formulation and the solution representation as in the recent work by Maekawa \cite{Maekawa1,Maekawa2}; see also \cite{Anderson}. Precisely, let $\omega = \partial_z u_1 - \partial_x u_2$ be the corresponding vorticity in $\TT\times \RR_+$. Then, the vorticity equation reads
\beq\label{NS-vor}
\partial_{t}\omega-\nu\Delta\omega=-u\cdot\nabla\omega
\eeq
with $u = \nabla^\perp \Delta^{-1} \omega$. Here and throughout the paper, $\Delta^{-1}$ denotes the inverse of the Laplacian operator with the Dirichlet boundary condition: precisely, $\phi = \Delta^{-1}\omega$ solves $\Delta \phi = \omega$ on the half-space $\TT \times \RR_+$, with $\phi_{\vert_{z=0}} =0$.

To ensure the no-slip boundary condition, we impose $\partial_t u_1 = 0$ on the boundary. This leads to 
$$ 0 = \partial_t u_1 = \partial_z \Delta^{-1} \partial_t \omega = \partial_z \Delta^{-1} (\nu \Delta \omega - u \cdot \nabla \omega)$$
on the boundary. 
Introduce $\omega_*$ so that $\Delta \omega_* = 0$ with $\omega_* = \omega$ on the boundary. This yields $\partial_z \Delta^{-1}\Delta \omega = \partial_z (\omega - \omega_*) = (\partial_z + |\partial_x|)\omega$, in which $|\partial_x|$ denotes the Dirichlet-to-Neumann operator on the half space. Thus, the boundary condition on vorticity reads
\beq\label{BC-vor}
\nu (\partial_z + |\partial_x|)\omega_{\vert_{z=0}} = [\partial_z \Delta^{-1} ( u \cdot \nabla \omega)]
_{\vert_{z=0}}.\eeq

Throughout this paper, we shall deal with the Navier-Stokes solutions that solve \eqref{NS-vor}-\eqref{BC-vor}, together with the Biot-Savart law $u = \nabla^\perp \Delta^{-1} \omega$. Such a solution will  be constructed via the Duhamel's integral representation, treating the nonlinearity as a source term.  

\subsection{Analytic boundary layer function spaces}\label{sec-defBL}

In this paper, we shall deal with analytic boundary layer spaces introduced in \cite{GrN2, GrNbook}. Precisely, we consider holomorphic functions on the pencil-like complex domain:
\begin{equation}\label{def-pencil}\Omega_{\sigma} =\Big\{z\in\mathbb{C}:\quad|\Im z|<\min\{\sigma \Re z ,\sigma \}\Big\} ,\end{equation}
for $\sigma>0$. Let $\delta = \sqrt \nu$ be the classical boundary layer thickness. We introduce the analytic boundary layer function spaces ${\cal B}^{\sigma,\delta}$ that consists of holomorphic functions on $\Omega_\sigma$ with a finite norm 
\begin{equation}\label{def-bl0}
\| f \|_{\sigma,\delta}  = \sup_{z\in \Omega_\sigma} | f(z) | e^{\beta \Re z} 
\Bigl( 1 + \delta^{-1} \phi_{P} (\delta^{-1} z)  \Bigr)^{-1}
\end{equation}
for some small $\beta>0$, and for boundary layer weight function $$ \phi_P(z) = \frac{1}{1+|\Re z|^P} $$
for some fixed constant $P>1$. Here, we suppress the dependence on $\beta,P$ as they are fixed throughout the paper. We expect that the vorticity function $\omega (t,x,z)$, for each fixed $t,x$, will be in ${\cal B}^{\sigma,\delta}$, precisely describing the behavior near the boundary and near infinity. In fact, there is an additional initial layer of thickness $\delta_t = \sqrt{\nu t}$ that appears near the boundary. To capture this, we introduce the time-dependent boundary layer norm: 
\begin{equation}\label{def-blt}
\| f\|_{\sigma,\delta(t)}  = \sup_{z\in \Omega_\sigma} | \omega(z) | e^{\beta \Re z} 
\Bigl( 1 + \delta_t^{-1} \phi_{P} (\delta_t^{-1} z)  +  \delta^{-1} \phi_{P} (\delta^{-1} z)  \Bigr)^{-1} ,
\end{equation}
with $\delta_t = \sqrt{\nu t}$, $\delta = \sqrt \nu$, and with the same boundary layer weight function $\phi_P(\cdot)$. By convention, the norm $\| \cdot \|_{\sigma,\delta(0)}$ at time $t=0$ is replaced by $\| \cdot\|_{\sigma,\delta}$, the boundary layer norm with precisely one boundary layer behavior of thickness $\delta$, and $\| \cdot \|_{\sigma,0}$ denotes the norm without the boundary layer behavior.

For functions depending on two variables $f(x,z)$, we introduce the partial Fourier transform
in variable $x$ 
$$f(x,z)=\sum_{\alpha\in\mathbb{Z}}f_{\alpha}(z)e^{i\alpha x}$$
and introduce the following analytic norm 
$$||f|| _{\rho,\sigma,\delta(t)}=\sum_{\alpha\in\mathbb{Z}} e^{\rho|\alpha|} ||f_{\alpha}||_{\sigma,\delta(t)}$$
for $\rho,\sigma>0$. We denote by $B^{\rho,\sigma,\delta(t)}$ the corresponding spaces. 
In Section \ref{sec-analyticBL}, we shall recall some basic properties of such analytic function spaces.

\subsection{Main results}

Our main results are as follows. 

\begin{theorem}\label{theo-main} Let $M_0>0$ and let $\omega_0$ be in ${\cal B}^{\rho_0,\sigma_0,\delta}$ for $\rho,\sigma>0$ and for $\delta = \sqrt \nu$, with $\|\omega_0 \|_{\rho_0,\sigma_0,\delta}\le M_0$. Then, there is a positive time $T$ so that the solution $\omega(t)$ to the Navier-Stokes equations \eqref{NS-vor}-\eqref{BC-vor}, with the initial data $\omega(0) = \omega_0$, exists in $C^1([0,T]; {\cal B}^{\rho,\sigma, \delta(t)})$ for $0<\rho <\rho_0$ and $0<\sigma<\sigma_0$. In particular, there is a $C_0$ so that the vorticity $\omega(t)$ satisfies 
\begin{equation}\label{bdry-propagate}|\omega(t,x,z)| \le C_0 e^{-\beta z} \Bigl( 1 + \delta_t^{-1} \phi_{P} (\delta_t^{-1} z)  +  \delta^{-1} \phi_{P} (\delta^{-1} z)  \Bigr)\end{equation}
for $(t,x,z)\in [0,T]\times \TT \times \RR_+$, with $\delta_t = \sqrt{\nu t}$ and $\delta = \sqrt \nu$. 
\end{theorem}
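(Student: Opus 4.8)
The plan is to set up a Duhamel/Picard iteration for the vorticity equation \eqref{NS-vor}--\eqref{BC-vor} and close it in the scale of analytic boundary layer spaces $B^{\rho,\sigma,\delta(t)}$ by means of the abstract Cauchy-Kovalevskaya theorem recalled in Section~\ref{sec-analyticBL}. First I would write the solution as
\beq
\omega(t) = e^{t\nu\Delta_{BC}}\omega_0 - \int_0^t e^{(t-s)\nu\Delta_{BC}} \bigl( u(s)\cdot\nabla\omega(s)\bigr)\, ds + (\text{boundary correction}),
\eeq
where $e^{t\nu\Delta_{BC}}$ is the semigroup generated by $\nu\Delta$ together with the Robin-type boundary condition \eqref{BC-vor}; the boundary term is nonlocal in the nonlinearity and must be folded into the Duhamel integral via the Green function of the heat operator on the half-space with that boundary condition. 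The key analytic input is pointwise bounds on this Green function: it should have a Gaussian-in-$(x,z)$ core at scale $\sqrt{\nu(t-s)}$, plus a boundary-layer correction that creates exactly the $\delta_t^{-1}\phi_P$ and $\delta^{-1}\phi_P$ profiles appearing in \eqref{def-blt}, which is why the time-dependent norm is the right space to propagate. These bounds are the technical heart and presumably occupy a separate section; I would invoke them as given.

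Next I would verify the two hypotheses of the abstract Cauchy-Kovalevskaya scheme. The first is that the nonlinear map $\omega \mapsto u\cdot\nabla\omega$ (with $u = \nabla^\perp\Delta^{-1}\omega$) loses one derivative in the analyticity radius in a controlled, bilinear way: one needs the algebra/product estimate $\|fg\|_{\rho,\sigma,\delta(t)} \lesssim \|f\|_{\rho,\sigma,\delta(t)}\|g\|_{\rho,\sigma,\delta(t)}$ together with the Cauchy estimate $\|\partial_x f\|_{\rho',\sigma,\delta(t)} \lesssim (\rho-\rho')^{-1}\|f\|_{\rho,\sigma,\delta(t)}$, and similarly that the Biot-Savart operator $\nabla^\perp\Delta^{-1}$ maps $B^{\rho,\sigma,\delta(t)}$ boundedly into an analytic velocity space (here one must be careful: $u_1$ is bounded and $O(1)$ while $\omega$ carries the unbounded $\delta^{-1}$ weight, so the $z$-integration in $\Delta^{-1}$ must gain back the boundary-layer factor). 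The second hypothesis is the smoothing-in-time estimate for the Duhamel operator: $\|\int_0^t e^{(t-s)\nu\Delta_{BC}} F(s)\,ds\|_{\rho,\sigma,\delta(t)}$ should be controllable by $\sup_s \|F(s)\|$ with an integrable-in-time singular kernel, where the viscous smoothing $\nu\partial_z^2$ compensates the one lost tangential derivative. This is the standard mechanism behind analytic-regularity boundary-layer results (as in \cite{SammartinoCaflisch1, Maekawa1, GrN2}).

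With those two estimates in hand, the fixed-point argument is routine: on a short time interval $[0,T]$ with $T$ depending only on $M_0$, $\rho_0$, $\sigma_0$, $\beta$, $P$ (and crucially \emph{not} on $\nu$), the Picard iterates form a Cauchy sequence in $C^1([0,T]; B^{\rho,\sigma,\delta(t)})$ for any $0<\rho<\rho_0$, $0<\sigma<\sigma_0$, converging to the unique solution; the $C^1$-in-time regularity follows by reading off $\partial_t\omega$ from the equation, since $\nu\Delta\omega$ and $u\cdot\nabla\omega$ both lie in a slightly smaller analytic space. Finally, the pointwise bound \eqref{bdry-propagate} is just the unpacking of the definition of $\|\omega(t)\|_{\rho,\sigma,\delta(t)}\le C_0$ evaluated at real $z$, using that the holomorphic extension dominates its boundary trace and that $e^{\rho|\alpha|}$-summability in the Fourier index controls the sup over $x\in\TT$.

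I expect the main obstacle to be the interplay between the unbounded vorticity weight and the boundary condition \eqref{BC-vor}: the boundary term $[\partial_z\Delta^{-1}(u\cdot\nabla\omega)]_{z=0}$ is nonlocal and, a priori, of size $\nu^{-1}$ relative to the flux it balances, so one must show the Green function's boundary-layer part absorbs this exactly and produces the $\sqrt{\nu t}$ initial layer rather than a genuinely divergent contribution. Getting the constants in the Green function bounds uniform in $\nu$, and checking that the Biot-Savart law does not destroy the analytic boundary-layer structure (i.e.\ that the Dirichlet-to-Neumann operator $|\partial_x|$ and the $\Delta^{-1}$ smoothing act well on $\Omega_\sigma$), is where the real work lies; everything downstream is a standard abstract Cauchy-Kovalevskaya deployment.
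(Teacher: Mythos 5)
Your proposal follows essentially the same route as the paper: Duhamel representation with the boundary term folded in via the Green function of the Stokes problem with the Robin-type condition \eqref{BC-vor}, pointwise Green function bounds producing the $\delta_t^{-1}\phi_P$ and $\delta^{-1}\phi_P$ profiles, bilinear and Cauchy estimates in the analytic scale, and an abstract Cauchy--Kovalevskaya fixed point; the pointwise bound \eqref{bdry-propagate} is then just the unpacking of the norm, as you say.

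One correction to the stated mechanism: the lost tangential derivative in $u\cdot\nabla\omega$ is \emph{not} recovered by the viscous smoothing $\nu\partial_z^2$ --- that smoothing degenerates as $\nu\to 0$ and could not give $\nu$-uniform constants. It is recovered purely by the Cauchy--Kovalevskaya device of shrinking the analyticity radius linearly in time ($\rho<\rho_0-\gamma t$), so that $\int_0^t(\rho_0-\rho-\gamma s)^{-1-\zeta}\,ds\lesssim \gamma^{-1}(\rho_0-\rho-\gamma t)^{-\zeta}$; the semigroup itself is only required to be bounded (with the harmless $\sqrt{t/s}$ factor), not smoothing. A second structural point you elide: the paper runs the iteration twice, first closing in the unweighted analytic $L^1$ norms ($A(\gamma)$) and only then in the boundary-layer norms ($B(\gamma)$), because the estimate for the trace operator $\Gamma$ in the boundary-layer norm consumes one extra derivative of the nonlinearity ($|||g|||_{\rho,k+1}$ with a factor $\sqrt\nu$), which is supplied by the already-closed $L^1$ iteration. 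Neither point changes the overall architecture, which matches the paper's.
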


\begin{theorem}\label{theo-limit}  Let $M_0>0$ and let $u_0^\nu$ be divergence-free analytic initial data so that $u_0^\nu = 0$ on the boundary and $\omega_0^\nu = \nabla \times u_0^\nu$ is in ${\cal B}^{\rho_0,\sigma_0,\delta}$ for $\rho,\sigma>0$ and for $\delta = \sqrt \nu$, with $\|\omega_0^\nu\|_{\rho_0,\sigma_0,\delta}\le M_0$. Then, the inviscid limit holds for Navier-Stokes solutions with the initial data $u_0$. Precisely, there are unique local solutions $u^\nu(t)$ to the Navier-Stokes equations \eqref{NS}-\eqref{NS-BC}, for small $\nu>0$, and a unique solution $u^0(t)$ to the corresponding Euler equations, with initial data $u^0(0) = \lim_{\nu\to 0}u_0^\nu$, so that 
$$\sup_{t\in [0,T]}\| u^\nu(t) - u^0(t)\|_{L^p} \to 0$$
for $2\le p<\infty$, as $\nu \to 0$.  
\end{theorem}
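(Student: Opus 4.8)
The plan is to read off the inviscid limit \emph{directly} from the uniform-in-$\nu$ analytic bounds of Theorem~\ref{theo-main}, by running a weighted energy estimate on the velocity difference $w^{\nu}:=u^{\nu}-u^{0}$; the no-slip condition and the boundary-layer blow-up of the vorticity enter only through a single boundary term in that estimate, which is tamed by the pointwise bound \eqref{bdry-propagate}.

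First I would fix the limiting Euler flow. Running the same abstract Cauchy--Kovalevskaya argument underlying Theorem~\ref{theo-main} in the degenerate case $\delta=0$ — for which, by the stated convention, $\|\cdot\|_{\rho,\sigma,0}$ carries no boundary-layer weight — or simply invoking classical analytic local well-posedness for Euler, one obtains a unique $u^{0}\in C^{1}([0,T];B^{\rho,\sigma,0})$ solving \eqref{NS} with $\nu=0$, with $u^{0}(0)=\lim_{\nu\to0}u^{\nu}_{0}$ (the existence of this limit being understood in $L^{2}$), on the same interval $[0,T]$ as in Theorem~\ref{theo-main} after shrinking it. In particular $u^{0}$ is analytic, $\nabla u^{0}\in L^{\infty}([0,T]\times\TT\times\RR_{+})$, $u^{0}\in L^{\infty}_{t}L^{2}$, and $u^{0}\cdot n=0$ on $\{z=0\}$. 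On the viscous side, Theorem~\ref{theo-main} gives, uniformly in small $\nu$, the bound \eqref{bdry-propagate}; combined with the Biot--Savart estimates of Section~\ref{sec-analyticBL} (the boundary-layer weight is integrable in $z$ since $P>1$, so $\omega^{\nu}$ is uniformly bounded in $L^{\infty}_{t}L^{1}_{z}$ with analytic tails in $x$) this yields a uniform bound $\|u^{\nu}\|_{L^{\infty}([0,T]\times\TT\times\RR_{+})}\le C(M_{0})$. Taking the trace $z=0$ in \eqref{bdry-propagate} and using $u^{\nu}_{\vert z=0}=0$ — so that $u^{\nu}_{2\,\vert z=0}=0$ and $\partial_{z}u^{\nu}_{1\,\vert z=0}=\omega^{\nu}_{\vert z=0}$ — gives the crucial trace estimate $\|\omega^{\nu}(t,\cdot,0)\|_{L^{2}(\TT)}\lesssim \delta_{t}^{-1}+\delta^{-1}$, with $\delta_{t}=\sqrt{\nu t}$, $\delta=\sqrt{\nu}$.

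Next I would estimate $w^{\nu}$. Subtracting the momentum equations gives $\partial_{t}w^{\nu}+u^{\nu}\cdot\nabla w^{\nu}+w^{\nu}\cdot\nabla u^{0}+\nabla(p^{\nu}-p^{0})=\nu\Delta w^{\nu}+\nu\Delta u^{0}$; since $w^{\nu}\cdot n=0$ on $\{z=0\}$ and $\nabla\cdot w^{\nu}=0$, pairing with $w^{\nu}$ annihilates the transport and pressure terms and leaves
\begin{equation*}
\tfrac12\tfrac{d}{dt}\|w^{\nu}\|_{L^{2}}^{2}+\nu\|\nabla w^{\nu}\|_{L^{2}}^{2}
= -\int (w^{\nu}\cdot\nabla u^{0})\cdot w^{\nu}\,dxdz\;-\;\nu\int_{\{z=0\}}\partial_{z}w^{\nu}\cdot w^{\nu}\,dx\;+\;\nu\int \Delta u^{0}\cdot w^{\nu}\,dxdz .
\end{equation*}
The first right-hand term is $\le\|\nabla u^{0}\|_{L^{\infty}}\|w^{\nu}\|_{L^{2}}^{2}$ and the last is $\lesssim\nu(1+\|w^{\nu}\|_{L^{2}}^{2})$. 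For the boundary term, the normal component drops out because $w^{\nu}_{2\,\vert z=0}=0$, while $w^{\nu}_{1\,\vert z=0}=-u^{0}_{1\,\vert z=0}$ is bounded and $\partial_{z}w^{\nu}_{1\,\vert z=0}=\omega^{\nu}(t,x,0)-\partial_{z}u^{0}_{1}(t,x,0)$, so
\begin{equation*}
\Bigl|\nu\int_{\{z=0\}}\partial_{z}w^{\nu}\cdot w^{\nu}\,dx\Bigr|
\;\lesssim\;\nu\bigl(\|\omega^{\nu}(t,\cdot,0)\|_{L^{2}(\TT)}+1\bigr)\|u^{0}_{1}(t,\cdot,0)\|_{L^{2}(\TT)}
\;\lesssim\;\sqrt{\nu}\,(t^{-1/2}+1),
\end{equation*}
using $\nu\delta_{t}^{-1}=\sqrt{\nu}\,t^{-1/2}$, $\nu\delta^{-1}=\sqrt{\nu}$, and $\|u^{0}(t,\cdot,0)\|_{L^{2}(\TT)}\lesssim1$ by analyticity. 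Since $t^{-1/2}\in L^{1}(0,T)$, Gr\"onwall's inequality then yields $\sup_{t\in[0,T]}\|w^{\nu}(t)\|_{L^{2}}^{2}\lesssim\bigl(\|u^{\nu}_{0}-u^{0}(0)\|_{L^{2}}^{2}+\sqrt{\nu}\bigr)e^{CT}\to0$ as $\nu\to0$.

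Finally, for $2\le p<\infty$ I would interpolate, $\|w^{\nu}(t)\|_{L^{p}}\le\|w^{\nu}(t)\|_{L^{2}}^{2/p}\|w^{\nu}(t)\|_{L^{\infty}}^{1-2/p}$, and since $\|w^{\nu}\|_{L^{\infty}([0,T]\times\TT\times\RR_{+})}\le\|u^{\nu}\|_{L^{\infty}}+\|u^{0}\|_{L^{\infty}}\le C(M_{0})$ is uniformly bounded, the right side tends to $0$ uniformly in $t$, which is the claimed convergence $\sup_{t\in[0,T]}\|u^{\nu}(t)-u^{0}(t)\|_{L^{p}}\to0$ (note it genuinely fails at $p=\infty$, consistent with the $O(1)$ boundary layer in the tangential velocity). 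Uniqueness of $u^{\nu}$ and of the analytic $u^{0}$ is inherited from the contraction in the Cauchy--Kovalevskaya scheme. I expect the only genuinely delicate point to be the boundary term in the energy identity: it is essential that after the cancellations one pairs $\partial_{z}w^{\nu}_{1}$ against the \emph{tangential} trace $w^{\nu}_{1\,\vert z=0}$ (so the normal part vanishes), that no-slip converts $\partial_{z}u^{\nu}_{1\,\vert z=0}$ into $\omega^{\nu}_{\vert z=0}$, and that the unavoidable blow-ups $\delta^{-1}$ and $\delta_{t}^{-1}=(\nu t)^{-1/2}$ of the boundary vorticity supplied by \eqref{bdry-propagate} are beaten by the viscous prefactor $\nu=\delta^{2}$ — the borderline initial-layer contribution being rescued only after integrating the harmless singularity $t^{-1/2}$ in time.
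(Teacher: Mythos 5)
Your proof is correct and reaches the stated conclusion, but it takes a genuinely different route from the paper's. The paper performs no energy estimate on $w^\nu=u^\nu-u^0$ at all: it verifies Kato's criterion \eqref{Kato} by squaring the pointwise bound \eqref{bdry-propagate} and integrating over the whole half-space, getting $\nu\int_0^T\iint|\nabla u^\nu|^2\,dx\,dz\,dt\le C_T\sqrt\nu\to0$, and then invokes Kato's theorem \cite{Kato} as a black box to conclude $\sup_t\|u^\nu-u^0\|_{L^2}\to0$; the $L^p$ statement follows by the same interpolation with the uniform $L^\infty$ bound that you use. You instead unpack the energy identity for the difference and control the single surviving boundary term $\nu\int_{\{z=0\}}\partial_z w^\nu_1\,w^\nu_1\,dx$ through the \emph{trace} of \eqref{bdry-propagate} at $z=0$, using no-slip to identify $\partial_z u^\nu_{1\,\vert z=0}$ with $\omega^\nu_{\vert z=0}$, so that $\nu\|\omega^\nu(t,\cdot,0)\|_{L^2(\TT)}\lesssim\sqrt\nu\,(1+t^{-1/2})$ is integrable in time. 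This is the "vorticity-trace" reformulation of Kato's condition (in the spirit of \cite{Kel}); it buys a self-contained argument with an explicit rate $\|w^\nu\|_{L^2}^2\lesssim\sqrt\nu+\|u^\nu_0-u^0(0)\|_{L^2}^2$, at the cost of having to justify the energy identity and the boundary integrations by parts, whereas the paper's route is shorter but outsources that work to Kato's theorem. Both arguments rest on exactly the same input from Theorem~\ref{theo-main}, namely \eqref{bdry-propagate}, exploited through two different integrated consequences (bulk dissipation versus boundary trace). Two loose ends you share with the paper and should not be penalized for: the membership of $u^\nu-u^0$ in $L^2(\TT\times\RR_+)$ (the zero Fourier mode of the velocity need not vanish at infinity, so this requires a word), and the construction and uniqueness of the analytic Euler solution, which both texts treat briefly.
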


As mentioned, the proof of the main theorems is direct, using the vorticity formulation \eqref{NS-vor}-\eqref{BC-vor}. In fact, the existence of analytic solutions is proved, without having to derive the pointwise bounds on the Green function for the Stokes problem; see Sections \ref{sec-convS} and \ref{sec-nonlinear}. However, in order to prove the propagation of boundary layer behaviors as described in \eqref{bdry-propagate}, the detailed estimates on the Green function are crucial. The main results apply in particular for well-prepared analytic data that satisfy the Prandtl's ansatz \eqref{Ansatz}.  For general analytic data, beside the Prandtl's layers, the initial layers whose thickness is of order $\sqrt{\nu t}$ appear as captured in \eqref{bdry-propagate}.

Finally, we mention that the analysis avoids dealing directly with the Prandtl's layers and Prandtl's asymptotic expansions, and hence appears robust to resolve the inviscid limit problem (for analytic data) in domains with curved boundaries.     

The paper proceeds with some basic properties of the analytic boundary layer norms and elliptic estimates in Section \ref{sec-analyticBL}. The main analysis of the paper is presented in Section \ref{sec-Stokes123} where we study in details the Stokes problem with boundary data and sources in the boundary layer function spaces.  The nonlinear iteration and the proof of the main theorems are given in Section \ref{sec-proof}. 


\section{Analytic function spaces}\label{sec-analyticBL}
In this section, we shall prove some basic properties of the analytic norms as well as the elliptic estimates that yield bounds on velocity in term of vorticity. These norms and estimates can be found in \cite{GrNbook}. See also \cite{SammartinoCaflisch1,SammartinoCaflisch2}. 

\subsection{Analytic spaces}\label{sec-analytic}
Let $f(x,z)$ be holomorphic functions on $\TT \times \Omega_{\sigma}$, with $\Omega_\sigma$ being the pencil-like complex domain defined as in \eqref{def-pencil}. 
For $\rho,\sigma>0$ and $p\ge 1$, we introduce the analytic function spaces denoted by $\mathcal L^p_{\rho,\sigma}$ with the finite norm 
\begin{equation}\label{def-Anorm} \| f\| _{\mathcal L^p_{\rho,\sigma}} :=\sum_{\alpha\in\mathbb{Z}} e^{\rho|\alpha|} \|f_{\alpha}\|_{L^p_\sigma}, \qquad \| f_\alpha \|_{L^p_\sigma} := \sup_{0\le \theta < \sigma}\Big( \int_{\partial\Omega_\theta} |f_\alpha(z)|^p\; |dz|\Big)^{1/p},\end{equation}
in which $f_\alpha = f_\alpha(z)$ denotes the Fourier transform of $f(x,z)$. In the case when $p=\infty$, we replace the 
$L^p$ norm by the sup norm over $\Omega_\sigma$. Recalling the analytic boundary layer space $B^{\rho,\sigma,\delta(t)}$ introduced in Section \ref{sec-defBL}, we have

\begin{lemma}\label{lem-emL1} There holds the embedding ${\cal B}^{\rho,\sigma,\delta(t)} \subset \mathcal L^1_{\rho,\sigma}$. 
\end{lemma}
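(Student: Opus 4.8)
The plan is to reduce the claimed embedding $\mathcal{B}^{\rho,\sigma,\delta(t)}\subset \mathcal{L}^1_{\rho,\sigma}$ to a single one-dimensional estimate on each Fourier mode $f_\alpha$, since both norms are built by the same weighted $\ell^1$-sum $\sum_\alpha e^{\rho|\alpha|}(\cdots)$ over $\alpha\in\ZZ$. Concretely, it suffices to show that there is a constant $C$, independent of $\alpha$ and of the viscosity parameters, with $\|f_\alpha\|_{L^1_\sigma}\le C\|f_\alpha\|_{\sigma,\delta(t)}$; summing against $e^{\rho|\alpha|}$ then gives $\|f\|_{\mathcal{L}^1_{\rho,\sigma}}\le C\|f\|_{\rho,\sigma,\delta(t)}$. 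So the whole lemma is really the scalar statement: a holomorphic $g$ on $\Omega_\sigma$ with $|g(z)|\le C_0 e^{-\beta\Re z}\bigl(1+\delta_t^{-1}\phi_P(\delta_t^{-1}z)+\delta^{-1}\phi_P(\delta^{-1}z)\bigr)$ lies in $L^1_\sigma$, i.e. $\sup_{0\le\theta<\sigma}\int_{\partial\Omega_\theta}|g(z)|\,|dz|<\infty$, with the integral controlled by $C_0$.

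The key step is therefore the pointwise-to-$L^1$ passage. Fix $0\le\theta<\sigma$ and parametrize $\partial\Omega_\theta$; along this contour $\Re z$ ranges over $[0,\infty)$ (roughly, $|dz|\sim (1+\text{const})\,d(\Re z)$ near the vertex and exactly proportional to $d(\Re z)$ on the two rays), so it is enough to bound $\int_0^\infty e^{-\beta r}\bigl(1+\delta_t^{-1}\phi_P(\delta_t^{-1}r)+\delta^{-1}\phi_P(\delta^{-1}r)\bigr)\,dr$ uniformly in $\delta_t,\delta$. The first term contributes $\int_0^\infty e^{-\beta r}\,dr=\beta^{-1}$. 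For a boundary-layer term $\delta^{-1}\phi_P(\delta^{-1}r)=\delta^{-1}/(1+(\delta^{-1}r)^P)=\delta^{-1}/(1+(r/\delta)^P)$, the substitution $s=r/\delta$ gives $\int_0^\infty e^{-\beta\delta s}\,\frac{ds}{1+s^P}\le\int_0^\infty\frac{ds}{1+s^P}$, which is finite precisely because $P>1$ — this is where the hypothesis $P>1$ in the definition of $\phi_P$ is used. The same computation handles the $\delta_t$-term. Hence $\int_{\partial\Omega_\theta}|g|\,|dz|\le C(\beta,P)\|g\|_{\sigma,\delta(t)}$ with $C$ independent of $\theta$, and taking the supremum over $\theta<\sigma$ yields $\|g\|_{L^1_\sigma}\le C\|g\|_{\sigma,\delta(t)}$.

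I do not anticipate a serious obstacle; the only point requiring a little care is bookkeeping the geometry of $\partial\Omega_\theta$ so that $|dz|$ is genuinely comparable to $d(\Re z)$ (uniformly in $\theta<\sigma$, the slopes $\pm\tan\theta$ are bounded by $\tan\sigma$, so the arclength element is $\sqrt{1+\tan^2\theta}\,d(\Re z)\le\sec\sigma\,d(\Re z)$ on each ray), and confirming that the weight in $\|\cdot\|_{\sigma,\delta(t)}$ is evaluated using $\Re z$ exactly as written in \eqref{def-blt} so the substitution above is legitimate with $r=\Re z$. One should also note the integrability is uniform down to $\delta_t=0$ (the $t=0$ convention), since dropping a term only decreases the integral. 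This is exactly the type of elementary estimate the paper refers to as "recalled from \cite{GrNbook}", so a short argument along these lines is appropriate.
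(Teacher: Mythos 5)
Your proposal is correct and follows the same route as the paper: reduce to a per-mode estimate $\|f_\alpha\|_{L^1_\sigma}\lesssim\|f_\alpha\|_{\sigma,\delta(t)}$ and sum against $e^{\rho|\alpha|}$; the paper simply asserts the per-mode bound as clear, while you supply the (correct) verification that the weight is uniformly integrable on $\partial\Omega_\theta$ via the scaling substitution and $P>1$. The only cosmetic quibble is that on $\partial\Omega_\theta$ the slope of the rays is $\pm\theta$ rather than $\pm\tan\theta$, which does not affect the argument.
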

\begin{proof}
For the holomorphic functions $f_\alpha(z)$ satisfying 
$$ |f_\alpha(z)| \le \| f_\alpha\|_{\sigma, \delta(t)}e^{-\beta \Re z} \Bigl( 1 + \delta_t^{-1} \phi_{P} (\delta_t^{-1} z)  +  \delta^{-1} \phi_{P} (\delta^{-1} z)  \Bigr),$$
it is clear that $\| f_\alpha \|_{L^1_\sigma} \le \| f_\alpha \|_{\sigma,\delta(t)}$. By taking the summation over $\alpha \in \ZZ$, the lemma follows. 
\end{proof}

\begin{lemma} \label{loss-1}
For any $0<\sigma'<\sigma$, $0<\rho'<\rho$, and $\psi(z)=\frac{z}{1+z}$, there hold
\begin{equation}\label{alg1}\| fg\|  _{\mathcal L^1_{\rho,\sigma}}  \le\| f\| _{\mathcal L^\infty_{\rho,\sigma}}\| g\|_{\mathcal L^1_{\rho,\sigma}}, \end{equation}
\begin{equation}\label{alg2}\|\partial_{x}f\|_{\mathcal L^1_{\rho',\sigma}}\le \frac{C}{\rho- \rho'}\| f\|_{\mathcal L^1_{\rho,\sigma}}, \qquad \| \psi(z)\partial_{z}f\| _{\mathcal L^1_{\rho,\sigma'}}\le \frac{C}{\sigma-\sigma'}\| f\|_{\mathcal L^1_{\rho,\sigma}}. \end{equation}
The same estimates hold for boundary layer norms $\| \cdot \|_{\rho,\sigma,\delta}$ replacing $\|\cdot \|_{\mathcal L^1_{\rho,\sigma}}$ in the above three inequalities. 
\end{lemma}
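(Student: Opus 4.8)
The plan is to reduce each of the three inequalities, via the partial Fourier decomposition $f=\sum_\alpha f_\alpha(z)e^{i\alpha x}$, to an estimate on the scalar holomorphic modes $f_\alpha$ on the pencil $\Omega_\sigma$, combined with the convolution structure $\sum_\alpha e^{\rho|\alpha|}\|\cdot\|$ of the analytic norm. For the boundary layer versions the only extra input is the elementary fact that the pointwise weight $e^{\beta\Re z}(1+\delta_t^{-1}\phi_P(\delta_t^{-1}z)+\delta^{-1}\phi_P(\delta^{-1}z))^{-1}$ — and likewise the one-layer weight of \eqref{def-bl0} — is comparable, up to a universal constant, at two points $z,w\in\Omega_\sigma$ whose distance is small relative to $|z|$, equivalently to $|\psi(z)|\sim|z|/(1+|z|)\sim\min\{|z|,1\}$ on $\Omega_\sigma$.

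\emph{Algebra bound \eqref{alg1}.} Write $(fg)_\alpha=\sum_{\gamma}f_{\alpha-\gamma}g_\gamma$, bound $|f_{\alpha-\gamma}(z)|\le\|f_{\alpha-\gamma}\|_{L^\infty_\sigma}$ pointwise on $\partial\Omega_\theta$, take the $L^1(\partial\Omega_\theta,|dz|)$ norm and then the supremum over $\theta<\sigma$ to get $\|(fg)_\alpha\|_{L^1_\sigma}\le\sum_\gamma\|f_{\alpha-\gamma}\|_{L^\infty_\sigma}\|g_\gamma\|_{L^1_\sigma}$; multiplying by $e^{\rho|\alpha|}\le e^{\rho|\alpha-\gamma|}e^{\rho|\gamma|}$ and summing over $\alpha$ factors the double series into $\|f\|_{\mathcal L^\infty_{\rho,\sigma}}\|g\|_{\mathcal L^1_{\rho,\sigma}}$. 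The boundary layer version follows from the same computation by carrying the weight of $g_\gamma(z)$ through: it is exactly the weight wanted on $(fg)_\alpha(z)$, while $|f_{\alpha-\gamma}(z)|\le\|f_{\alpha-\gamma}\|_{L^\infty_\sigma}$ is weight-free.

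\emph{Derivative bounds \eqref{alg2}.} The $\partial_x$ bound is immediate: $(\partial_x f)_\alpha=i\alpha f_\alpha$, so $\|(\partial_x f)_\alpha\|_{L^1_\sigma}=|\alpha|\,\|f_\alpha\|_{L^1_\sigma}$, and $|\alpha|e^{-(\rho-\rho')|\alpha|}\le\frac1{e(\rho-\rho')}$ inside the sum gives the claim; the computation ignores the $z$-variable, so it holds verbatim for $\|\cdot\|_{\rho,\sigma,\delta}$. The $\partial_z$ bound is the one substantive estimate, a Cauchy estimate adapted to the pencil. First I would check the geometric fact $\mathrm{dist}(z,\partial\Omega_\sigma)\ge c_0(\sigma-\sigma')|\psi(z)|$ for all $z\in\Omega_{\sigma'}$ — this is where the shape of $\Omega_\sigma$ enters: a cone of slope $\sigma$ for $\Re z\le1$, a strip of half-width $\sigma$ for $\Re z\ge1$, the transition at $\Re z=1$ being independent of $\sigma$, while $|\psi(z)|\sim\min\{|z|,1\}$ — so that with $r(z):=c(\sigma-\sigma')|\psi(z)|$ and $c$ small one has $\overline{B(z,2r(z))}\subset\Omega_\sigma$. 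For the weighted-$\sup$ norm $\|\cdot\|_{\sigma,\delta}$, the disk form of Cauchy's formula, $|\psi(z)\partial_z f_\alpha(z)|\le\frac{|\psi(z)|}{r(z)}\sup_{|w-z|\le r(z)}|f_\alpha(w)|=\frac{1}{c(\sigma-\sigma')}\sup_{|w-z|\le r(z)}|f_\alpha(w)|$, together with the slowly-varying weight, already yields the loss $\frac{C}{\sigma-\sigma'}$. For the $L^1_z$ norm $\mathcal L^1_{\rho,\sigma}$ I would instead use the area form $|\partial_z f_\alpha(z)|\le\frac{C}{r(z)^3}\int_{B(z,r(z))}|f_\alpha|\,dA$, multiply by $|\psi(z)|$, integrate over $\partial\Omega_{\theta'}$ for arbitrary $\theta'<\sigma'$, and apply Fubini; the resulting integral runs over the thin tube $\{w:\mathrm{dist}(w,\partial\Omega_{\theta'})\lesssim r(w)\}$ against a kernel $\lesssim\frac{\sigma-\sigma'}{|\psi(w)|}$, and a change of coordinates in this tube (using the foliation of $\Omega_\sigma$ by the curves $\partial\Omega_\theta$, whose Jacobian cancels the $1/|\psi|$ on the conical part) bounds it by $\frac{C}{\sigma-\sigma'}\|f_\alpha\|_{L^1_\sigma}$, the constant $c$ having cancelled. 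Summing $e^{\rho|\alpha|}\|\cdot\|$ over $\alpha$ completes both versions, with the stated losses $\frac{C}{\rho-\rho'}$ and $\frac{C}{\sigma-\sigma'}$.

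\emph{Main obstacle.} The genuine difficulty is the $\partial_z$ estimate: choosing the Cauchy radius $r(z)$ jointly adapted to the geometry of $\Omega_\sigma$ and to the weight $\psi(z)=z/(1+z)$ that degenerates at the vertex $z=0$; verifying the disk inclusion near both the vertex and the cone/strip transition at $\Re z=1$; and — for the $L^1_z$ norm — pushing the Cauchy bound through Fubini and the thin-tube change of coordinates while tracking that the loss is exactly first order in $(\sigma-\sigma')$. For the boundary layer versions one must in addition confirm that all the weights are slowly varying on the scale $r(z)$, which is where the specific form $\phi_P(z)=1/(1+|\Re z|^P)$ is convenient. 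Everything else — the algebra bound and the $\partial_x$ bound — is a soft consequence of the $e^{\rho|\alpha|}$-weighted convolution structure of the analytic norms.
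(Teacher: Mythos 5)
Your proposal is correct, and for \eqref{alg1} and the $\partial_x$ bound it coincides with the paper's proof (convolution in $\alpha$ plus $|\alpha|e^{-(\rho-\rho')|\alpha|}\lesssim(\rho-\rho')^{-1}$). For the $\psi(z)\partial_z$ bound you also make the same key choice as the paper — a Cauchy radius $r(z)\sim c(\sigma-\sigma')\min\{\Re z,1\}\sim c(\sigma-\sigma')|\psi(z)|$ adapted to the cone/strip geometry of $\Omega_\sigma$ — but you close the $L^1_\sigma$ estimate differently. The paper keeps the circle form of Cauchy's formula and observes that, for each fixed angle $w$, the map $z\mapsto z+R_ze^{iw}$ sends the leaf $\partial\Omega_{\theta'}$ into a single other leaf $\partial\Omega_\theta$ with $\theta<\sigma$ (a scaling on the conical part, a translation on the strip part), so the shifted $L^1$ integral is immediately dominated by $\|f_\alpha\|_{L^1_\sigma}$ with no Fubini. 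You instead use the area mean-value form $|\partial_zf_\alpha(z)|\lesssim r(z)^{-3}\int_{B(z,r(z))}|f_\alpha|\,dA$, Fubini over the thin tube, and a co-area change of variables along the foliation $\{\partial\Omega_\theta\}$; your bookkeeping is right ($r^{-3}\cdot r(w)\cdot$ ($\theta$-width $\sim c(\sigma-\sigma')$) $\cdot$ (Jacobian $\sim|\psi|$) gives exactly one power of $(\sigma-\sigma')^{-1}$, though the constant $c$ survives as $1/c$ rather than cancelling — harmless since $c$ is universal). Your route is more robust (it does not need the shifted contour to be exactly a leaf) at the cost of the extra Fubini/co-area step; the paper's is slicker but more tied to the exact self-similar form of $\Omega_\sigma$. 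You also treat the boundary-layer versions explicitly via the slowly-varying property of $e^{\beta\Re z}$ and $\delta^{-1}\phi_P(\delta^{-1}z)$ on the multiplicative scale $r(z)\lesssim\Re z$ (uniformly in $\delta$), which the paper asserts without proof; that verification is correct and worth keeping.
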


\begin{proof} 
By definition, we write $$fg(x,z)=\sum_{\alpha\in\mathbb{Z}}e^{i\alpha x}\sum_{\beta\in\mathbb{Z}}f_{\alpha-\beta}(z)g_{\beta}(z)$$
and hence, we estimate 
$$
\begin{aligned}
\| fg\|_{\mathcal L^1_{\rho,\sigma}}&=\sum_{\alpha\in\mathbb{Z}}e^{\rho|\alpha|}\|  \sum_{\beta\in\mathbb{Z}}f_{\alpha-\beta}(\cdot)g_{\beta}(\cdot)\| _{L^1_\sigma}
\le\sum_{\alpha\in\mathbb{Z}}\sum_{\beta\in\mathbb{Z}}e^{\rho|\alpha-\beta|}e^{\rho|\beta|}\| f_{\alpha-\beta}\| _{L^\infty_\sigma}\| g_{\beta}\|  _{L^1_\sigma}\\
\end{aligned}
$$
which proves the first inequality. As for the second, we compute 
$$\| \partial_{x}f\|_{\mathcal L^1_{\rho',\sigma}}\le\sum_{\alpha}\|  f_{\alpha}\|_{L^1_{\sigma}}|\alpha|e^{\rho'|\alpha|}$$
Using the fact that $(\rho-\rho')|\alpha|e^{(\rho'-\rho)|\alpha|}$ is bounded, the second inequality follows.

Finally, we check the third inequality. By the Cauchy integral formula, we have 
$$\partial_{z}f_\alpha(z)=\frac{1}{2\pi i}\int_{C(z,R_z)}\frac{f_\alpha(y)}{(y-z)^{2}}dy$$
where $C(z,R_z)$ is the circle, centered at $z$ and of radius $R_z$ so that $C(z,R_z) \in \Omega_{\sigma}$. 
Let us take 
\[
R_z=c_0(\sigma-\sigma')
\left\{ \begin{aligned}
\Re(z)\qquad &\text{if}\quad \Re(z)<1\\
1\qquad &\text{if}\quad \Re(z)\ge 1\\
\end{aligned}\right.
\]
for some small and positive $c_0$.
Thus, using the parametrization $y=z+e^{i w}R_z$ with $0\le w\le 2\pi$, we get 
\[
\bega
\partial_z f_\alpha(z)&=\frac{1}{2\pi i}\int_0^{2\pi}\frac{f_\alpha(z+R_ze^{iw})}{R_z^2 e^{2iw}}(R_z ie^{iw})dw\\
&=\frac{1}{2\pi R_z}\int_0^{2\pi}f_\alpha(z+R_ze^{iw})e^{-iw}dw
.\enda
\]
Now for any $0\le \theta ' < \sigma'$, we compute 
\[ \bega
\int_{\partial \Omega_{\theta'}}|\psi(\Re z)\partial_zf_\alpha(z)||dz|&\le \int_{\partial\Omega_{\theta'}}\int_0^{2\pi}\frac{\psi(\Re z)}{2\pi R_z}|f_\alpha(z+R_ze^{iw})||dw||dz|\\
&\le \frac{C_0}{\sigma-\sigma'} \int_{\partial\Omega_{\theta'}}\int_0^{2\pi}|f_\alpha(z+R_ze^{iw})||dw||dz|\\
&\le \frac{2\pi C_0}{\sigma-\sigma'} \sup_{0\le w\le 2\pi}\int_{\partial\Omega_{\theta'}}|f_\alpha(z+R_ze^{iw})||dz|.
\enda
\]

It remains to show that the above integral is bounded by $2\| f\|_{\mathcal L^1_{\rho,\sigma}}$. 
To this end, it suffices to show that for each fixed $w\in [0,2\pi]$, there is a positive constant $\theta <\sigma$ so that  
\begin{equation}\label{newline} z+R_ze^{iw}\in \partial\Omega_{\theta} , \qquad \forall ~z\in \partial\Omega_{\theta'}.\end{equation}
{\em Case 1: $\Re(z)\le 1$.} Recalling $R_z=c_0(\sigma-\sigma')\Re(z)$ and $\Im (z) = \theta' \Re(z)$ on $\partial \Omega_{\theta'}$, we compute 
\[ \bega
\Re(z+R_ze^{iw})&=\Re(z)+R_z\cos(w) = \Re(z)(1+c_0(\sigma-\sigma')\cos w)\\
\Im(z+R_ze^{iw})&=\Im(z)+R_z\sin(w) = \Re(z)(\theta'+c_0(\sigma-\sigma')\sin w).
\enda
\]
Hence, $z+R_ze^{iw}\in \partial\Omega_{\theta}$ for $\theta=\frac{\theta'+c_0(\sigma-\sigma')\sin w}{1+c_0(\sigma-\sigma')\cos w}. $ We now check $\theta<\sigma$. Indeed, we need 
\[
\theta'+c_0(\sigma-\sigma')\sin w< \sigma (1+c_0(\sigma-\sigma')\cos w)
\]
which is equivalent to \[
\sigma-\theta'> c_0(\sigma-\sigma')(\sin w-\sigma \cos w) .
\]
Since $\theta'<\sigma'$ and $c_0$ can be taken arbitrarily small (independent of $\sigma, \sigma'$), the above inequality follows. 

~\\
{\em Case 2: $\Re(z)\ge 1$.} Similarly, using $R_z=c_0(\sigma-\sigma')$ and $\Im(z)=\theta'$ on $\partial\Omega_{\theta'}$, we compute 

\[ 
\Im(z+R_ze^{iw})=\Im(z)+R_z\sin w=\theta'+c_0(\sigma-\sigma')\sin w=\widetilde\theta, 
\]
where $\widetilde \theta < \sigma $ for sufficiently small $c_0$ and for $\theta' < \sigma' < \sigma$. This proves \eqref{newline}. 
\end{proof}

\subsection{Elliptic estimates}
Next, we recall the elliptic estimates, which are adapted from \cite{GrNbook,GrN2}. 

\begin{prop} \label{inverseLaplace}
Let $\phi$ be the solution of
$
- \Delta \phi = \omega
$
with the zero Dirichlet boundary condition, and set $u = \nabla^\perp \phi$. Then, there hold 
\beq \label{laplace-5}
\| u_1\|_{\mathcal L^\infty_{\rho,\sigma}}+\| u_2\| _{\mathcal L^\infty_{\rho,\sigma}}\le C\| \omega\| _{\mathcal L^1_{\rho,\sigma}} ,
\eeq 
\beq 
\label{laplace-6}
\| \partial_{x}u_1\| _{\mathcal L^\infty_{\rho,\sigma}}+\| \nabla u_2\| _{\mathcal L^\infty_{\rho,\sigma}} + \| \psi^{-1}u_2\| _{L^\infty_{\rho,\sigma}} \le C\| \omega\|  _{\mathcal L^1_{\rho,\sigma}}+C\|  \partial_{x}\omega\| _{\mathcal L^1_{\rho,\sigma}} ,
\eeq 
\beq \label{laplace-7}
\| \nabla u_1 \|_{\mathcal L^1_{\rho,\sigma}} + \|\nabla u_2 \|_{\mathcal L^1_{\rho,\sigma}} \le C\| \omega\|  _{\mathcal L^1_{\rho,\sigma}}, 
\eeq 
 with $\psi(z) = z/ (1+z)$, for some constant $C$.
\end{prop}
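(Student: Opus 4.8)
The plan is to prove the elliptic estimates \eqref{laplace-5}--\eqref{laplace-7} by working Fourier-mode by Fourier-mode in $x$ and solving the ordinary differential equation $-\partial_z^2 \phi_\alpha + \alpha^2 \phi_\alpha = \omega_\alpha$ on $\RR_+$ with $\phi_\alpha(0) = 0$ via its explicit Green function, then reassembling the analytic norms by summation over $\alpha$. Concretely, for $\alpha \neq 0$ one has
\[
\phi_\alpha(z) = \frac{1}{2|\alpha|}\int_0^\infty \Bigl( e^{-|\alpha||z-y|} - e^{-|\alpha|(z+y)} \Bigr) \omega_\alpha(y)\,dy,
\]
so that $u_{1,\alpha} = \partial_z \phi_\alpha$ and $u_{2,\alpha} = -i\alpha \phi_\alpha$ are given by kernels whose $L^1_y$ norm (uniformly in $z$) is $O(1/|\alpha|)$ for $u_2$ and $O(1)$ for $u_1$; the case $\alpha = 0$ is handled separately, where $\phi_0(z) = \int_0^\infty \min\{z,y\}\,\omega_0(y)\,dy$ after using that $u$ must decay, giving $u_{1,0}(z) = \int_z^\infty \omega_0(y)\,dy$ and $u_{2,0} = 0$. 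The first inequality \eqref{laplace-5} then follows from $\|u_{j,\alpha}\|_{L^\infty_\sigma} \le C\|\omega_\alpha\|_{L^1_\sigma}$, which is Schur's test applied on each contour $\partial\Omega_\theta$ (one must check the kernels remain integrable after deforming the contour into the pencil domain, which is where analyticity of $\omega_\alpha$ and a contour shift are used), followed by multiplying by $e^{\rho|\alpha|}$ and summing.

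For \eqref{laplace-6} the new point is the gain of a tangential derivative: $\partial_x u_1$ and $\partial_x u_2$ carry an extra factor $i\alpha$, which is compensated by the $1/|\alpha|$ (resp.\ $O(1)$ with the extra $|\alpha|$ absorbed into $\|\partial_x\omega\|$) in the kernel bounds, so these terms are controlled by $\|\omega\|_{\mathcal L^1_{\rho,\sigma}} + \|\partial_x\omega\|_{\mathcal L^1_{\rho,\sigma}}$. The normal derivatives $\partial_z u_2 = -i\alpha\partial_z\phi_\alpha$ are likewise fine since $\partial_z$ hitting the Green function only produces bounded kernels. The genuinely delicate term is the weighted quantity $\|\psi^{-1} u_2\|_{L^\infty_{\rho,\sigma}}$ with $\psi(z) = z/(1+z)$, i.e.\ bounding $(1 + z^{-1}) u_{2,\alpha}(z)$ near $z = 0$: since $u_{2,\alpha}(0) = 0$ one writes $u_{2,\alpha}(z) = \int_0^z \partial_z u_{2,\alpha}(y)\,dy$ (using analyticity to make sense of this on the pencil) so $|u_{2,\alpha}(z)/z| \le \sup |\partial_z u_{2,\alpha}|$, and the large-$z$ part is trivial since $\psi^{-1} \to 1$; this reduces it to the $\partial_z u_2$ bound already obtained. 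The same Hardy-type argument handles the contour version, being careful that the segment from $0$ to $z$ inside $\Omega_\sigma$ can be taken and that $|\partial_z u_{2,\alpha}|$ is uniformly controlled there.

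Finally \eqref{laplace-7}, the $\mathcal L^1 \to \mathcal L^1$ bound on the full gradient of $u$, follows from the stronger statement that the Green-function kernels are bounded in $L^1$ in \emph{both} variables simultaneously (a Schur test with both $\sup_z \int dy$ and $\sup_y \int dz$ finite), so that convolution against them maps $L^1_\sigma \to L^1_\sigma$ with constant $O(1)$ for $\partial_z u_1, \partial_z u_2$ and $O(1/|\alpha|)\cdot|\alpha| = O(1)$ for $\partial_x u_1, \partial_x u_2$; summing $e^{\rho|\alpha|}\|\cdot\|_{L^1_\sigma}$ gives the claim. The last sentence of the proposition, that all of this persists for the boundary layer norms $\|\cdot\|_{\rho,\sigma,\delta}$, is obtained by noting that the Green-function convolutions essentially commute with the boundary layer weight $1 + \delta^{-1}\phi_P(\delta^{-1}z)$ up to bounded factors: for the non-differentiated and $\partial_x$ estimates the kernel is a genuine averaging operator in a $z$-neighborhood of size $O(1/|\alpha|) \le O(1)$, on which the weight varies by a bounded multiplicative amount (this uses $P > 1$ and the explicit form of $\phi_P$), while for $\psi^{-1}u_2$ the Hardy argument again transfers the weight through. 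The main obstacle I expect is precisely this last compatibility-with-the-weight check together with the contour deformations: one must verify that shifting the integration contour for $\omega_\alpha$ from the boundary of $\Omega_{\theta'}$ to that of $\Omega_\theta$ (as in the proof of Lemma \ref{loss-1}) does not destroy the uniform kernel bounds and does not pick up a growing factor from the boundary layer weight near $z = 0$, which is handled by choosing the deformation parameter small relative to $\sigma - \sigma'$ exactly as done there.
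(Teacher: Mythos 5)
Your proposal is correct and follows essentially the same route as the paper: Fourier reduction to the one-dimensional problem $\partial_z^2\phi_\alpha-\alpha^2\phi_\alpha=\omega_\alpha$ with the explicit Green function, the kernel bound $|G_\pm(y,z)|\lesssim \alpha^{-1}e^{-\alpha|y-z|}$ extended to the contours $\partial\Omega_\theta$ of the pencil domain, and Schur-type convolution bounds summed over $\alpha$. Your Hardy-type argument for $\|\psi^{-1}u_2\|_{\mathcal L^\infty_{\rho,\sigma}}$ (using $u_{2,\alpha}(0)=0$) supplies a detail the paper's proof leaves implicit, while your closing paragraph on boundary layer norms is not needed here, since that claim belongs to Lemma \ref{loss-1} rather than to this proposition.
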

\begin{proof} Taking the Fourier transform, it suffices to study the classical one-dimensional Laplace equation
\beq \label{Lap1}
\partial_z^2 \phi_\alpha - \alpha^2 \phi_\alpha = \omega_\alpha
\eeq
on $\Omega_{\sigma}$, with the Dirichlet boundary condition $ \phi_\alpha(0) = 0$, and $\alpha>0$. For real values $z$, the solution $\phi_\alpha $ of (\ref{Lap1}) is explicitly
given by 
$$\phi_\alpha(z)=\int_{0}^z G_-(y,z)\omega_\alpha(y)dy + \int_z^\infty G_+(y,z)\omega_\alpha(y)dy$$
with $$G_\pm(y,z) = -\frac{1}{2\alpha} \Big( e^{\pm\alpha (z-y) } - e^{-\alpha (y+z)} \Big).
$$
This expression may be extended to complex values of $z$. Indeed, for $z\in \Omega_\sigma$, there is a positive $\theta$ so that $z\in \partial\Omega_\theta$. We then write  $\partial\Omega_\theta = \gamma_-(z) \cup \gamma_+(z) $, consisting of complex numbers $y \in \partial\Omega_\theta$ so that $\Re y < \Re z$ and $\Re y > \Re z$, respectively. Then, we write 
\beq \label{laplace-3}
\begin{aligned}\phi_\alpha(z)&= \int_{\gamma_-(z)} G_-(y,z)\omega_\alpha(y)dy + \int_{\gamma_+(z)} G_+(y,z)\omega_\alpha(y)dy
.\end{aligned}\eeq
We note in particular that for $y \in \gamma_\pm(z)$, there holds 
$$ |G_\pm(y,z)|\le \alpha^{-1} e^{-\alpha |y-z|} .$$
This proves that 
\begin{equation}\label{bd-phiz} |\phi_\alpha(z)| \le \int_{\partial \Omega_\theta} \alpha^{-1} e^{-\alpha |y-z|} |\omega_\alpha(y)|\; |dy| \le  \alpha^{-1} \int_{\partial \Omega_\theta}  |\omega_\alpha(y)|\; |dy| ,\end{equation}
which by definition yields $\sup_{\Omega_\sigma} |\alpha \phi_\alpha(z)| \le \| \omega_\alpha\|_{L^1_\sigma}$. The same proof holds for $\partial_z \phi_\alpha(z)$. This completes the proof of \eqref{laplace-5}. The estimate \eqref{laplace-6} follows by treating $\partial_x$ as multiplication by $\alpha$ in the Fourier space. 

Finally, taking $L^1$ norm of the estimate \eqref{bd-phiz} and upon noting that the kernel $\alpha e^{-\alpha|y-z|}$ is bounded in $L^1$ norm, we obtain the estimate \eqref{laplace-7} for $\alpha^2 \phi_\alpha$. The second derivative in $z$, we use $\partial_z^2 \phi_\alpha = \alpha^2 \phi_\alpha + \omega_\alpha$. This completes the proof of the lemma. 
\end{proof}

\subsection{Bilinear estimates}

\begin{lemma}\label{lem-bilinear}
For any $\omega$ and $\tilde \omega$,
denoting by $v$ the velocity related to $\omega$, we have
$$
\begin{aligned}
 \| v\cdot \nabla \tilde \omega \|_{\mathcal L^1_{\rho,\sigma}} &\le C \| \omega\|_{\mathcal L^1_{\rho,\sigma}}\| \tilde \omega_x\|_{\mathcal L^1_{\rho,\sigma}} + C (\| \omega\|_{\mathcal L^1_{\rho,\sigma}} +\|\omega_x\|_{\mathcal L^1_{\rho,\sigma}} ) \| \psi(z)\partial_z\tilde \omega\|_{\mathcal L^1_{\rho,\sigma}}
 \\
  \| v\cdot \nabla \tilde \omega \|_{\rho,\sigma,\delta} &\le C \| \omega\|_{\rho,\sigma,\delta}\| \tilde \omega_x\|_{\rho,\sigma,\delta} + C (\| \omega\|_{\rho,\sigma,\delta} +\|\omega_x\|_{\rho,\sigma,\delta} ) \| \psi(z)\partial_z\tilde \omega\|_{\rho,\sigma,\delta}
 \end{aligned}$$
\end{lemma}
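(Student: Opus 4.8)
The plan is to reduce the bilinear estimate to an application of the algebra property \eqref{alg1} together with the elliptic bounds of Proposition \ref{inverseLaplace} and the derivative estimates \eqref{alg2} from Lemma \ref{loss-1}. First I would expand the convection term as $v\cdot\nabla\tilde\omega = v_1 \partial_x\tilde\omega + v_2 \partial_z\tilde\omega$ and treat the two summands separately. For the first summand, I would apply \eqref{alg1} with $f = v_1$ in $\mathcal L^\infty_{\rho,\sigma}$ and $g = \partial_x\tilde\omega = \tilde\omega_x$ in $\mathcal L^1_{\rho,\sigma}$, and then bound $\|v_1\|_{\mathcal L^\infty_{\rho,\sigma}} \le C\|\omega\|_{\mathcal L^1_{\rho,\sigma}}$ by \eqref{laplace-5}; this yields the $C\|\omega\|_{\mathcal L^1_{\rho,\sigma}}\|\tilde\omega_x\|_{\mathcal L^1_{\rho,\sigma}}$ term directly.

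The second summand $v_2\partial_z\tilde\omega$ is where the weight $\psi(z) = z/(1+z)$ must be used to compensate for the lack of a full $z$-derivative estimate on $\tilde\omega$ and the degeneracy of $v_2$ at the boundary. The key point is to write $v_2 \partial_z\tilde\omega = (\psi^{-1}(z) v_2)\cdot(\psi(z)\partial_z\tilde\omega)$, so that by \eqref{alg1} one gets $\|v_2\partial_z\tilde\omega\|_{\mathcal L^1_{\rho,\sigma}} \le \|\psi^{-1}v_2\|_{\mathcal L^\infty_{\rho,\sigma}}\,\|\psi(z)\partial_z\tilde\omega\|_{\mathcal L^1_{\rho,\sigma}}$. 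Now $\|\psi^{-1}v_2\|_{L^\infty_{\rho,\sigma}}$ is exactly the quantity controlled in \eqref{laplace-6}, which is bounded by $C(\|\omega\|_{\mathcal L^1_{\rho,\sigma}} + \|\partial_x\omega\|_{\mathcal L^1_{\rho,\sigma}})$. Combining the two contributions gives the first displayed inequality.

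The second displayed inequality, with the boundary layer norms $\|\cdot\|_{\rho,\sigma,\delta}$ in place of $\|\cdot\|_{\mathcal L^1_{\rho,\sigma}}$, I would prove in exactly the same way. The only thing to check is that the analogues of \eqref{alg1} and of the elliptic bounds \eqref{laplace-5}, \eqref{laplace-6} hold for the boundary layer norm. The algebra property for $\|\cdot\|_{\rho,\sigma,\delta}$ is asserted in the last sentence of Lemma \ref{loss-1}. For the elliptic estimates: when multiplying the $L^\infty$-in-$z$ bound on $v_1$ (resp. $\psi^{-1}v_2$) against the boundary-layer-weighted $L^1$ bound on $\tilde\omega$, the boundary layer weight factor $\bigl(1 + \delta_t^{-1}\phi_P(\delta_t^{-1}z) + \delta^{-1}\phi_P(\delta^{-1}z)\bigr)$ is carried along by $\tilde\omega$ while $v$ is estimated in the plain $L^\infty$ norm — the velocity is bounded (no boundary layer), which is consistent with the Biot--Savart law smoothing out the unbounded vorticity. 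Concretely one replaces \eqref{bd-phiz} and its $\partial_z$ analogue by the same kernel estimate against $|\omega_\alpha(y)|$, now bounded using the boundary layer norm, and uses $\int_{\partial\Omega_\theta}\alpha e^{-\alpha|y-z|}(1+\cdots)\,|dy|$ being comparable to $(1+\cdots)$ evaluated at $z$, up to a harmless constant, since the weight $\phi_P$ varies slowly on the scale $\alpha^{-1}$ relative to $\delta,\delta_t$; this was already recorded in the elliptic estimates taken from \cite{GrNbook,GrN2}.

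The main obstacle is the bookkeeping in this last step: verifying that the convolution of the Biot--Savart kernel $\alpha e^{-\alpha|y-z|}$ against the boundary layer weight reproduces that weight (up to constants), uniformly in $\alpha$, $\delta$, $\delta_t$, and $z \in \Omega_\sigma$ — in particular near the boundary $\Re z \to 0$ where the weight blows up like $\delta^{-1}$ or $\delta_t^{-1}$, and at the corner of the pencil domain. Once that kernel-versus-weight comparison is in hand — and it is essentially the content of the elliptic estimates cited from \cite{GrNbook} — the bilinear bound follows mechanically from the algebra and derivative-loss estimates. Everything else, splitting the convection term and inserting $\psi^{-1}\psi = 1$, is routine.
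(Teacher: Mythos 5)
Your decomposition and the treatment of both terms match the paper's proof exactly: split $v\cdot\nabla\tilde\omega = v_1\partial_x\tilde\omega + v_2\partial_z\tilde\omega$, use the algebra property \eqref{alg1} with $v_1$ (resp.\ $\psi^{-1}v_2$) in $\mathcal L^\infty_{\rho,\sigma}$ controlled by \eqref{laplace-5} (resp.\ \eqref{laplace-6}), and for the boundary layer norm use the mixed product estimate $\|fg\|_{\rho,\sigma,\delta}\le \|f\|_{\mathcal L^\infty_{\rho,\sigma}}\|g\|_{\rho,\sigma,\delta}$ so that the weight rides entirely on $\tilde\omega$.

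One correction to your last paragraph: the ``main obstacle'' you identify is not actually part of the proof, and the claim you make there is false. You do not need the convolution of the Biot--Savart kernel against the boundary layer weight to reproduce that weight at $z$; indeed it does not (for instance, at $z\sim\sqrt\delta$ and $\alpha\sim\delta^{-1/2}$ the convolution is of size $\delta^{-1/2}$ while $1+\delta^{-1}\phi_P(\delta^{-1}z)$ is much smaller for $1<P<2$). What is true, and all that is needed, is that the convolution is bounded by a \emph{constant} times $\|\omega_\alpha\|_{\sigma,\delta}$: since the weight $e^{-\beta\Re y}\bigl(1+\delta^{-1}\phi_P(\delta^{-1}y)\bigr)$ is integrable with integral bounded uniformly in $\delta$ (this is Lemma \ref{lem-emL1}, i.e.\ $\|\omega\|_{\mathcal L^1_{\rho,\sigma}}\le C\|\omega\|_{\rho,\sigma,\delta}$), Proposition \ref{inverseLaplace} applies as stated and gives $\|v\|_{\mathcal L^\infty_{\rho,\sigma}}+\|\psi^{-1}v_2\|_{\mathcal L^\infty_{\rho,\sigma}}\lesssim \|\omega\|_{\rho,\sigma,\delta}+\|\omega_x\|_{\rho,\sigma,\delta}$ with no further kernel-versus-weight analysis. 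The velocity carries no boundary layer and is never compared against the weight; only $\tilde\omega$ is. With that simplification your argument is complete and coincides with the paper's.
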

\begin{proof}
We write $$
(v \cdot \nabla) \tilde \omega = v_1 \partial_x \tilde \omega+ v_2 \partial_z \tilde \omega.
$$
Using \eqref{alg1} and \eqref{laplace-5}, the first term is clear. On the other hand, the second term is due to \eqref{alg1} and \eqref{laplace-6}. Finally, for the boundary layer norms, we note that 
$$\| fg\|  _{\rho,\sigma,\delta}  \le\| f\| _{\mathcal L^\infty_{\rho,\sigma}}\| g\|_{\rho,\sigma,\delta}.$$
The lemma thus follows, upon using Proposition \ref{inverseLaplace} and noting $\| f \|_{\mathcal L^1_{\rho,\sigma}} \le \| f \|_{\rho,\sigma,\delta}$.
\end{proof}

\section{The Stokes problem}\label{sec-Stokes123}
In this section, we study the inhomogenous Stokes problem 
\beq 
\begin{aligned}\label{stokes}
\omega_{t}-\nu\triangle\omega&=f(t,x,z), \qquad \mbox{in}\quad \TT \times \Omega_\sigma,\\
\nu (\partial_{z}+|\partial_{x}|)\omega &=g(t,x), \quad\qquad \mbox{on} \quad z=0,\\
\end{aligned}
\eeq 
together with the initial data $\omega_{\vert_{t=0}} = \omega_0$. Let $e^{\nu t B}$ denote the semigroup of the corresponding Stokes problem: namely, the heat equation $\partial_{t}\omega-\nu\Delta\omega = 0$ on $\TT \times \Omega_\sigma$ with the homogenous boundary condition $
\nu (\partial_z + |\partial_x|)\omega_{\vert_{z=0}} =0.
$ Solutions to the linear Stokes problem is then constructed via the following Duhamel's integral representation, which will be proved in the next subsection,  
 \beq\label{Duh-w}
\omega(t)=e^{\nu tB}\omega_{0} +\int_{0}^{t}e^{\nu(t-s)B} f(s) \; ds + \int_0^t \Gamma(\nu (t-s))g(s) \;ds
\eeq 
in which $\Gamma(\nu t) = e^{\nu tB}(g\mathcal{H}^1_{\mathbb{T}\times \{y=0\}})$, where $\mathcal{H}^1_{\mathbb{T}\times \{y=0\}}$ is the one-dimensional Hausdorff measure restricted on the boundary; precisely, see \eqref{semigroup-exp}-\eqref{semigroup-exp1} for the explicit construction of $e^{\nu tB}$ and $\Gamma(\nu t)$ in term of the Green function for the Stokes problem. 

In this section, we shall derive uniform bounds for the Stokes semigroup in analytic spaces, with the analytic norm 
$$\| \omega\|  _{\rho,\sigma,\delta(t)}=\sum_{\alpha\in\mathbb{Z}} e^{\rho|\alpha|} \|\omega_{\alpha}\|_{\sigma,\delta(t)}$$
with the boundary layer norm defined by 
\begin{equation}\label{def-blnormFull}
\| \omega_\alpha\|_{\sigma,\delta(t)}  = \sup_{z\in \Omega_{\sigma}} | \omega_\alpha(z) | e^{\beta \Re z} 
\Bigl( 1 + \delta_t^{-1} \phi_{P} (\delta_t^{-1} z)  +  \delta^{-1} \phi_{P} (\delta^{-1} z)  \Bigr)^{-1} ,
\end{equation}
in which the boundary thicknesses are $\delta_t = \sqrt{\nu t}$ and $\delta = \sqrt \nu $.  As for the initial data, the norm is measured by $\| \omega_\alpha\|_{\sigma,\delta(0)}$, which consists of precisely one boundary layer behavior whose thickness is $\delta = \sqrt \nu$. We introduce 
$$ ||| \omega(t) |||_{\rho,\sigma,\delta(t),k}  = \sum_{j+\ell \le k} \|\partial_x^j (\psi(z)\partial_z)^\ell \omega(t)\|_{\rho,\sigma,\delta(t)}$$
and 
$$ ||| \omega |||_{\mathcal{W}^{k,1}_{\rho,\sigma}} = \sum_{j+\ell \le k} \|\partial_x^j (\psi(z)\partial_z)^\ell \omega(t)\|_{\mathcal L^1_{\rho,\sigma}} .$$ 
We also denote $||| g|||_{\rho,k}$ the corresponding analytic norm for $g = g(x)$. We obtain the following key proposition. 
 
\begin{proposition}\label{prop-Stokes} Let $e^{\nu t B}$ be the semigroup for the linear Stokes problem, and $\Gamma(\nu t)$ be the operator $e^{\nu tB}(g\mathcal{H}^1_{\mathbb{T}\times \{y=0\}})$, where $\mathcal{H}^1_{\mathbb{T}\times \{y=0\}}$ is the one-dimensional Hausdorff measure restricted on the boundary. Then, $\partial_x$ commutes with both $e^{\nu t B}$ and $\Gamma(\nu t)$. In addition, for any $k\ge 0$, and for any $0\le s< t\le T$, there hold 
$$
\begin{aligned}
 ||| e^{\nu t B} f  |||_{\rho,\sigma,\delta(t),k}  \lesssim  ||| f|||_{\rho,\sigma, \delta(0),k}, & \qquad   ||| e^{\nu (t-s) B} f  |||_{\rho,\sigma,\delta(t),k}  \lesssim \sqrt{\frac ts} ||| f|||_{\rho,\sigma, \delta(s),k},
  \\ ||| \Gamma(\nu (t-s))g  |||_{\rho,\sigma,\delta(t),k} & \lesssim \sqrt{\frac t{t-s}} ||| g|||_{\rho,k} + \sqrt \nu  ||| g|||_{\rho,k+1} ,
   \end{aligned}$$
uniformly in the inviscid limit. Similarly, we also obtain 
$$
\begin{aligned}
 ||| e^{\nu t B} f  |||_{\mathcal{W}^{k,1}_{\rho,\sigma}}  \lesssim  ||| f|||_{\mathcal{W}^{k,1}_{\rho,\sigma}}, \qquad ||| \Gamma(\nu t)g  |||_{\mathcal{W}^{k,1}_{\rho,\sigma}} & \lesssim ||| g|||_{\rho,k} ,
   \end{aligned}$$
uniformly in the inviscid limit. 
\end{proposition}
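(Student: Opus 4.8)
## Proof Proposal for Proposition~\ref{prop-Stokes}

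The plan is to reduce everything to pointwise kernel bounds for the one-dimensional Stokes problem via the partial Fourier transform in $x$, and then to integrate these kernels against the boundary layer weight $1 + \delta_t^{-1}\phi_P(\delta_t^{-1}z) + \delta^{-1}\phi_P(\delta^{-1}z)$. First I would record that $\partial_x$ commutes with $e^{\nu tB}$ and $\Gamma(\nu t)$: this is immediate from translation invariance in $x$, since the boundary condition $\nu(\partial_z + |\partial_x|)\omega|_{z=0}=0$ is Fourier-diagonal. Hence it suffices to bound each Fourier mode $\omega_\alpha$, and the $(\psi(z)\partial_z)^\ell$ derivatives will be handled exactly as in Lemma~\ref{loss-1} — by a Cauchy integral on circles $C(z,R_z)$ with $R_z \sim \Re z$ near the boundary and $R_z\sim 1$ away — so that the $k$-th order estimates follow from the $k=0$ case applied on slightly shrunk domains $\Omega_{\sigma'}$, modulo the harmless loss $(\sigma-\sigma')^{-1}$ absorbed into the implied constant (here $\partial_x$ costs a factor $|\alpha|$ which is dominated in the $e^{\rho|\alpha|}$ sum, exactly as in \eqref{alg2}). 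So the heart of the matter is the scalar heat semigroup on $\Omega_\sigma$ with the Robin-type condition $\nu(\partial_z + |\alpha|)\omega_\alpha|_{z=0}=0$.

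Next I would write down the Green function $G_\alpha(t,y,z)$ for this scalar problem explicitly. On the half-line it is the heat kernel plus a reflected term: schematically $G_\alpha(t,y,z) = \frac{1}{\sqrt{\nu t}}\big( e^{-|z-y|^2/c\nu t} + R_\alpha(t,y,z)\big)$, where the reflection piece $R_\alpha$ encodes the boundary operator $(\partial_z+|\alpha|)$ and satisfies Gaussian-type bounds $|R_\alpha(t,y,z)| \lesssim \frac{1}{\sqrt{\nu t}} e^{-|z+y|^2/c\nu t} + |\alpha| e^{-|\alpha|(z+y)}$ (the second term being the nonlocal contribution from $|\alpha|=|\partial_x|$), together with analogous bounds for $\partial_z G_\alpha$ carrying an extra $(\nu t)^{-1/2}$ or $|\alpha|$. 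The boundary kernel is $\Gamma_\alpha(\nu t)(z) = \nu \, \partial_n G_\alpha(\nu t, 0, z)$ up to the normalization from the Hausdorff measure, so it obeys $|\Gamma_\alpha(\nu t)(z)| \lesssim \frac{1}{\sqrt{\nu t}} e^{-z^2/c\nu t} + |\alpha| e^{-|\alpha| z}$ — crucially \emph{without} the $\nu$ prefactor, which is where the boundary-layer-thickness gain $\delta = \sqrt\nu$ will come from. These are the pointwise Green function bounds alluded to in the introduction; I would cite or reproduce them from \cite{Maekawa1,Maekawa2}.

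With the kernels in hand, the estimates become integral inequalities for the weights. For $e^{\nu tB}$ acting on the initial data: one checks that convolving the heat-type kernel $\frac{1}{\sqrt{\nu t}}e^{-|z-y|^2/c\nu t}$ against the weight $1 + \delta^{-1}\phi_P(\delta^{-1}y)$ produces $\lesssim 1 + \delta_t^{-1}\phi_P(\delta_t^{-1}z) + \delta^{-1}\phi_P(\delta^{-1}z)$ — i.e. the time-layer of thickness $\sqrt{\nu t}$ emerges from smoothing the thin $\sqrt\nu$-layer, while the $1+\delta^{-1}\phi_P$ part is preserved; the reflected piece only helps. This uses that $\phi_P(z)=(1+|\Re z|^P)^{-1}$ is essentially scale-monotone and that $e^{-y^2/c\nu t}$ has mass $\sqrt{\nu t}$. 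The factor $\sqrt{t/s}$ in the two-time estimate comes from comparing the source layer $\delta_s = \sqrt{\nu s}$ at time $s$ with $\delta_t$ at the later time: the kernel at time $t-s$ smooths the $\sqrt{\nu s}$-layer into a $\sqrt{\nu(t-s)}+\sqrt{\nu s}\le\sqrt{\nu t}$-layer, and the worst ratio of weights is $\sqrt{t/s}$. For $\Gamma(\nu(t-s))g$: integrating $\big(\frac{1}{\sqrt{\nu(t-s)}}e^{-z^2/c\nu(t-s)} + |\alpha|e^{-|\alpha|z}\big)$ against the target weight, the Gaussian term contributes $\sqrt{t/(t-s)}\,|g_\alpha|$ (the $(\nu(t-s))^{-1/2}$ is exactly the size of $\delta_t^{-1}$ near the boundary up to the $\sqrt{t/(t-s)}$ loss), while the nonlocal term $|\alpha|e^{-|\alpha|z}$ is bounded against the $\delta^{-1}=\nu^{-1/2}$-layer by $\sqrt\nu|\alpha|\,|g_\alpha|$, giving the $\sqrt\nu\,|||g|||_{\rho,k+1}$ term after summing $e^{\rho|\alpha|}$. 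The $\mathcal{W}^{k,1}$ estimates are the same computation with $L^1_\sigma$ in place of the sup, using that $\|\frac{1}{\sqrt{\nu t}}e^{-\cdot^2/c\nu t}\|_{L^1}\lesssim 1$ and $\||\alpha|e^{-|\alpha|\cdot}\|_{L^1}\lesssim 1$ — here no $\sqrt\nu$ loss appears because the $L^1$ norm is insensitive to the layer thickness, and the contour-shift argument of Lemma~\ref{loss-1} (inequality \eqref{newline}) is needed to move the integration from $\partial\Omega_{\theta'}$ onto the translated contour hit by the Gaussian convolution.

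The main obstacle I anticipate is \emph{the bookkeeping of the three competing weights} $1$, $\delta_t^{-1}\phi_P(\delta_t^{-1}z)$, $\delta^{-1}\phi_P(\delta^{-1}z)$ through the Gaussian convolutions, uniformly as $\nu\to 0$ and uniformly for $s$ near $0$ or $t$. One must show that the convolution never generates a layer thinner than $\sqrt\nu$ and never one thicker than $\sqrt{\nu t}$, that the algebraic (not exponential) decay rate $P$ of $\phi_P$ is exactly preserved — this is why $\phi_P$ is taken polynomial rather than Gaussian, so that $e^{-y^2/\nu t}*\phi_P(\delta^{-1}y) \lesssim \phi_P(\delta_t^{-1}z)$ survives — and that the singular time weights $\sqrt{t/s}$, $\sqrt{t/(t-s)}$ are integrable in $s$ (they are, being $s^{-1/2}$ and $(t-s)^{-1/2}$ type), which is what makes the Duhamel iteration in Section~\ref{sec-proof} close. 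A secondary technical point is verifying the Gaussian kernel bounds on the complex pencil domain $\Omega_\sigma$ rather than on $\RR_+$: one analytically continues the representation \eqref{laplace-3}-style along $\partial\Omega_\theta$ and checks $|e^{-|z-y|^2/c\nu t}|\le e^{-c'|\Re z-\Re y|^2/\nu t}$ for $y,z$ on the same ray, which holds because the imaginary parts are controlled by $\sigma$ times the real parts.
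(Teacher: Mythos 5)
Your overall route coincides with the paper's: reduce by Fourier transform in $x$ to a scalar half-line problem, decompose the Green function as a Neumann heat kernel plus a residual term coming from the boundary operator, prove pointwise kernel bounds, and then convolve against the boundary layer weights, extending to $\Omega_\sigma$ by contour arguments. However, two specific steps as you propose them would not deliver the stated estimates.

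First, your bound on the residual kernel, $|R_\alpha|\lesssim (\nu t)^{-1/2}e^{-|y+z|^2/c\nu t}+|\alpha|e^{-|\alpha|(y+z)}$, has the wrong spatial localization for the non-Gaussian piece. The correct decay rate (Proposition \ref{prop-Stokes-Green}) is $\mu_f=|\alpha|+\nu^{-1/2}$, i.e.\ $|R_\alpha|\lesssim \mu_f e^{-\theta_0\mu_f|y+z|}+\cdots$: the boundary correction is confined to a layer of thickness $\min(|\alpha|^{-1},\sqrt\nu)$, not $|\alpha|^{-1}$. This matters precisely for the $\Gamma$ estimate. With the paper's bound, $\mu_f e^{-\mu_f z}\lesssim (|\alpha|+\nu^{-1/2})e^{-z/\sqrt\nu}\lesssim (1+|\alpha|\sqrt\nu)\,\delta^{-1}\phi_P(\delta^{-1}z)$, which is exactly how the term $\sqrt\nu\,|||g|||_{\rho,k+1}$ arises in \eqref{Green-bdry}. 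With your version, for low frequencies (say $\alpha=1$ and $\nu$ small) the profile $e^{-z}$ is not contained in the $\sqrt\nu$-layer weight, and the best one can do is compare $|\alpha|e^{-|\alpha|z}$ against the constant part of the weight, yielding $|||g|||_{\rho,k+1}$ \emph{without} the factor $\sqrt\nu$ --- a strictly weaker statement than claimed. Obtaining the correct rate $\mu_f$ is the whole point of the contour analysis in the proof of Proposition \ref{prop-Stokes-Green} (the contour is kept at distance $M$ from the origin so that $\Re\mu\gtrsim \nu^{-1/2}$ on it), and cannot simply be quoted as a generic reflection bound.

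Second, your plan to handle the conormal derivatives $(\psi(z)\partial_z)^\ell$ by the Cauchy integral formula on shrunk domains, with the loss $(\sigma-\sigma')^{-1}$ ``absorbed into the implied constant,'' does not give the proposition as stated: the estimates are claimed with the \emph{same} $\sigma$ on both sides, and a $(\sigma-\sigma')^{-1}$ factor is a genuine loss of analyticity radius, not a constant. Note also that the right-hand norms $|||\cdot|||_{\rho,\sigma,\delta(\cdot),k}$ already contain $k$ conormal derivatives of the input, so no derivative needs to be manufactured: the paper instead uses $\partial_zH=-\partial_yH$ and integrates by parts, splitting the $y$-integral at $y=z/2$ (where $\psi(z)\le 2\psi(y)$) and absorbing the boundary term and the near-field piece into the Gaussian, so that $(\psi\partial_z)^\ell$ of the output is controlled by the convolution against $(\psi\partial_y)^\ell\omega_\alpha$ with no shrinking of $\sigma$ or $\rho$. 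Reserving all analyticity loss for the nonlinearity is what allows the Cauchy--Kovalevskaya iteration of Section \ref{sec-proof} to close in the form written. The remaining ingredients of your sketch --- the commutation with $\partial_x$, the weight bookkeeping producing the $\sqrt{t/s}$ and $\sqrt{t/(t-s)}$ factors, the $L^1$ estimates, and the complex extension --- match the paper's Lemmas \ref{lem-Lapconv}--\ref{lem-Heatconv} and Section \ref{sec-Grcomplex}.
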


\subsection{Duhamel principle}
We first treat the Stokes problem on $\TT \times \RR_+$. By  taking the Fourier transform in $x$, the problem is reduced to 
\beq\label{Stokes-a} 
\begin{aligned} 
\partial_t\omega_\alpha-\nu \Delta_\alpha\omega_\alpha&=f_\alpha(t,x,z), \qquad \mbox{in}\quad \RR_+\\
\nu (\partial_z + |\alpha|)\omega_\alpha &=g_\alpha(t), \qquad \qquad \mbox{on} \quad z=0,
\end{aligned}
\eeq 
in which $\omega_\alpha$ denotes the Fourier transform of $\omega$ with respect to $x$, and $\Delta_\alpha = \partial_z^2 - \alpha^2$. Let $G_\alpha(t,z;y)$ be the corresponding Green function of the linear Stokes problem \eqref{Stokes-a}. That is, for each fixed $y \ge 0$, the function $G_\alpha(t,z;y)$ solves 
\beq\label{Stokes-Gr} 
\begin{aligned} 
( \partial_t-\nu \Delta_\alpha) G_\alpha(t,z;y) &=0, \qquad \mbox{in}\quad \RR_+\\
\nu (\partial_z + |\alpha|) G_\alpha(t,z;y) &=0, \qquad \mbox{on} \quad z=0,
\end{aligned}
\eeq 
 together with the initial data $G_\alpha(0,z;y) = \delta_y(z)$. The Green function will be constructed so that $G_\alpha (t,\cdot;y)\in L^1$ for each $t,y$. It follows that 
 
 \begin{lemma}[Duhamel's principle]\label{lem-Duh} For any $T>0$, and for any $f_\alpha \in L^\infty(0,T; L^1(\RR_+)) $ and $g_\alpha \in L^\infty(0,T)$, the unique solution to the linear Stokes problem \eqref{Stokes-a}, with the initial data $\omega_\alpha(0,z) = \omega_{0,\alpha}(z)$ in $L^1(\RR_+)$, satisfies 
 \begin{equation}\label{Duh-Stokes}
\begin{aligned}
 \omega_\alpha(t,z) &= \int_0^\infty  G_\alpha(t,y;z) \omega_{0,\alpha} (y) \; dy + \int_0^t G_\alpha(t-s, 0; z) g_\alpha (s)\; ds
 \\&\quad+ \int_0^t\int_0^\infty G_\alpha(t-s,y;z) f_\alpha (s,y)\; dyds. 
\end{aligned} \end{equation}
 \end{lemma}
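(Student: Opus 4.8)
\emph{Overview and symmetry of the Green function.} I would prove \eqref{Duh-Stokes} as the standard heat-potential decomposition — the initial datum carried by the Stokes semigroup, plus a Duhamel integral of the interior source $f_\alpha$, plus a single-layer potential carrying the boundary datum $g_\alpha$. The structural fact that must come first is that the Green function of \eqref{Stokes-Gr} is symmetric in its two spatial slots, $G_\alpha(t,z;y)=G_\alpha(t,y;z)$; this is exactly what makes it legitimate to write \eqref{Duh-Stokes} with the evaluation point $z$ in the second argument. Symmetry holds because the spatial generator $A_\alpha:=\nu(\partial_z^2-\alpha^2)$, acting on functions $v$ on $\RR_+$ subject to the homogeneous Robin condition $\nu(\partial_z+|\alpha|)v|_{z=0}=0$, is self-adjoint on $L^2(\RR_+)$: in the Green identity $\int_0^\infty\big((A_\alpha v)w-v(A_\alpha w)\big)\,dz=\nu\big[\partial_z v\,w-v\,\partial_z w\big]_0^\infty$ the endpoint at infinity drops out, and at $z=0$ the bracket equals $-\nu\big(\partial_z v(0)\,w(0)-v(0)\,\partial_z w(0)\big)=\nu|\alpha|v(0)w(0)-\nu|\alpha|v(0)w(0)=0$ by the Robin relations $\partial_z v(0)=-|\alpha|v(0)$, $\partial_z w(0)=-|\alpha|w(0)$. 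Hence $e^{tA_\alpha}$ has a symmetric kernel, which is precisely $G_\alpha$; in particular $\int_0^\infty G_\alpha(t,y;z)h(y)\,dy=(e^{tA_\alpha}h)(z)$ solves the homogeneous Stokes problem with datum $h$, identifying the first term of \eqref{Duh-Stokes}.

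\emph{Uniqueness.} Since the statement refers to \emph{the} unique solution, I would also record that \eqref{Stokes-a} has at most one solution in the natural class. Subtracting two solutions (so zero data, zero forcing), pairing with $\omega_\alpha$ and integrating by parts yields $\tfrac12\tfrac{d}{dt}\|\omega_\alpha\|_{L^2}^2=-\nu\|\partial_z\omega_\alpha\|_{L^2}^2-\nu\alpha^2\|\omega_\alpha\|_{L^2}^2+\nu|\alpha|\,\omega_\alpha(t,0)^2$, where the sign of the boundary term comes from the Robin condition; the trace inequality $\omega_\alpha(t,0)^2=-2\int_0^\infty\omega_\alpha\,\partial_z\omega_\alpha\,dz\le 2\|\omega_\alpha\|_{L^2}\|\partial_z\omega_\alpha\|_{L^2}$ absorbs it into the first two terms, so $\|\omega_\alpha(t)\|_{L^2}$ is non-increasing and therefore identically zero; parabolic smoothing for $t>0$ makes this rigorous starting from $L^1$ data. (This is the same computation — now keeping $g_\alpha$ — that will later produce a priori bounds.)

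\emph{Derivation of the identity.} Given uniqueness, it suffices to check that the right-hand side of \eqref{Duh-Stokes} solves \eqref{Stokes-a}; equivalently, and more transparently, one derives \eqref{Duh-Stokes} by pairing against the backward Green function. Fix $t\in(0,T]$ and $z_0>0$ and, for $s\in[0,t)$, put $\Phi(s,z):=G_\alpha(t-s,z;z_0)$, so that $\partial_s\Phi=-A_\alpha\Phi$, $\Phi$ satisfies the homogeneous Robin condition in $z$, $\Phi(0,\cdot)=G_\alpha(t,\cdot;z_0)$, and $\Phi(s,\cdot)\rightharpoonup\delta_{z_0}$ as $s\to t^-$. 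Differentiate $I(s):=\int_0^\infty\omega_\alpha(s,z)\,\Phi(s,z)\,dz$, insert the equations for $\omega_\alpha$ and $\Phi$, and integrate the two $\partial_z^2$ terms by parts: the $-\alpha^2$ pieces cancel, and one is left with $\int_0^\infty f_\alpha(s,\cdot)\Phi(s,\cdot)\,dz$ plus $\nu\big[\partial_z\omega_\alpha\,\Phi-\omega_\alpha\,\partial_z\Phi\big]_0^\infty$; the endpoint at infinity vanishes by the exponential decay of $G_\alpha$ together with $\omega_\alpha\in L^1$, while at $z=0$, inserting $\nu(\partial_z+|\alpha|)\omega_\alpha(s,0)=g_\alpha(s)$ and $\nu(\partial_z+|\alpha|)\Phi(s,0)=0$, the bracket collapses to the single contribution $g_\alpha(s)\,G_\alpha(t-s,0;z_0)$ (the precise sign being fixed by the normalization of the boundary condition in \eqref{Stokes-a}). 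Integrating $I'(s)$ over $s\in(0,t)$ and using $I(0)=\int_0^\infty\omega_{0,\alpha}(y)G_\alpha(t,y;z_0)\,dy$ and $I(s)\to\omega_\alpha(t,z_0)$ as $s\to t^-$ then yields \eqref{Duh-Stokes}, after relabelling $z_0\leftrightarrow z$ and using the symmetry of the Green function.

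\emph{Main obstacle.} The real content sits in two places. The boundary-term bookkeeping must be handled carefully because the Robin operator carries the extra zeroth-order term $|\alpha|\omega_\alpha$; it is precisely the matched pair of Robin conditions — for $\omega_\alpha$ with datum $g_\alpha$, and for $\Phi$ with datum zero — that makes every cross-term cancel, leaving only $g_\alpha(s)G_\alpha(t-s,0;z_0)$. More substantively, one must justify the concentration limits in $s$: that $I(s)\to\omega_\alpha(t,z_0)$ as $s\to t^-$, a Dirac-mass limit relying on the continuity of $\omega_\alpha$ near $(t,z_0)$ furnished by parabolic regularity, and — along the alternative route of verifying directly that the right-hand side of \eqref{Duh-Stokes} solves \eqref{Stokes-a} — that the single-layer term $\int_0^t G_\alpha(t-s,0;z)g_\alpha(s)\,ds$ indeed reproduces the Robin datum at $z=0$. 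That last statement is the classical jump relation for heat potentials: the rescaling $t-s=z^2r/(4\nu)$ displays the kernel as a family concentrating with unit mass as $z\to0^+$, here adapted to the $\alpha$-dependent Robin boundary operator (the $|\alpha|\omega_\alpha$ part staying continuous up to the boundary and contributing nothing to the jump). I expect this verification, not the symmetry or the energy estimate, to be where the work lies.
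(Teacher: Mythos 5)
Your derivation is the same integration-by-parts argument the paper itself uses: pair $\omega_\alpha$ against the backward Green function $G_\alpha(t-s,\cdot;z)$, let the interior terms cancel by the Green function's equation, and collect the temporal endpoint terms and the $y=0$ bracket, which the matched Robin conditions collapse to the single-layer contribution; the extra material you supply (symmetry via self-adjointness of the Robin Laplacian, uniqueness by the energy/trace estimate, the jump relation) is correct and is simply left implicit in the paper. Your caution about the sign of the boundary contribution is warranted: carrying the bookkeeping through gives $-\,g_\alpha(s)\,G_\alpha(t-s,0;z)$ (consistent with the paper's own intermediate display and with the mass balance $\frac{d}{dt}\int_0^\infty\omega_\alpha\,dz=-g_\alpha$ when $\alpha=0$, $f=0$), rather than the $+$ sign written in \eqref{Duh-Stokes} — a discrepancy in the paper's statement that is immaterial for the subsequent norm estimates.
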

 
\begin{proof} Using \eqref{Stokes-a}, we compute 
$$
\begin{aligned} 
&
\int_0^t\int_0^\infty G_\alpha(t-s,y;z) f_\alpha (s,y)\; dyds 
\\& = 
\int_0^t\int_0^\infty  G_\alpha(t-s,y;z)  (\partial_s + \nu \alpha^2 - \nu \partial_y^2) \omega_\alpha (s,y) \; dyds 
\\& = 
\int_0^t\int_0^\infty (\partial_s + \nu \alpha^2 - \nu \partial_y^2)  G_\alpha(t-s,y;z)  \omega_\alpha (s,y) \; dyds + \int_0^\infty  G_\alpha(0,y;z) \omega_\alpha (t,y) \; dy 
\\&\quad -  \int_0^\infty  G_\alpha(t,y;z) \omega_{0,\alpha} (y) \; dy + \nu \int_0^t \Big( G_\alpha(t-s,y;z)  \partial_y \omega_\alpha - \partial_y G_\alpha(t-s,y;z) \omega_\alpha \Big)_{\vert_{y=0}}\; ds 
.\end{aligned}
$$
The lemma follows, upon using the initial data and boundary conditions on $G_\alpha(t,y;z)$. 
\end{proof}

\subsection{The Green function for the Stokes problem}
In this section, we derive sufficient pointwise bounds on the temporal Green function for the linear Stokes problem \eqref{Stokes-a}. Precisely, we prove the following. 

\begin{proposition}\label{prop-Stokes-Green} Let $G_{\alpha}(t,y;z)$ be the Green function of the Stokes problem \eqref{Stokes-a}. There holds 
\begin{equation}\label{Stokes-tG}
G_\alpha(t,y;z) = H_\alpha(t,y;z) + R_\alpha (t,y;z), 
\end{equation}
in which $H_\alpha(t,y;z) $ is exactly the one-dimensional heat kernel with the homogenous Neumann boundary condition and $R_\alpha(t,y;z)$ is the residual kernel due to the boundary condition. Precisely, 
There hold 
$$
\begin{aligned}
H_\alpha(t,y;z) & = \frac{1}{\sqrt{\nu t}} \Big( e^{-\frac{|y-z|^{2}}{4\nu t}} +  e^{-\frac{|y+z|^{2}}{4\nu t}} \Big) e^{-\alpha^{2}\nu t}, 
\\ | \partial_z^k R_\alpha (t,y;z)| &\lesssim \mu_f^{k+1} e^{-\theta_0\mu_f |y+z|} +  (\nu t)^{-\frac{k+1}{2}}e^{-\theta_0\frac{|y+z|^{2}}{\nu t}}  e^{-\frac18 \alpha^{2}\nu t} ,
\end{aligned}$$
for $y,z\ge 0$, $k\ge 0$, and for some $\theta_0>0$ and for $\mu_f = |\alpha| + \frac{1}{\sqrt \nu}$. 
\end{proposition}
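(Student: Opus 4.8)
The plan is to construct $G_\alpha$ explicitly via the Laplace transform in time, separate out the free heat contribution, and bound the residual. First I would take the Laplace transform of \eqref{Stokes-a} in $t$: writing $\widehat{G}_\alpha(\lambda,z;y)$ for the transform, the equation becomes the resolvent ODE $(\lambda + \nu\alpha^2)\widehat{G}_\alpha - \nu \partial_z^2 \widehat{G}_\alpha = \delta_y(z)$ on $\RR_+$ with the Robin condition $\nu(\partial_z + |\alpha|)\widehat{G}_\alpha = 0$ at $z=0$. This is solved in closed form: with $\mu = \mu(\lambda) = \sqrt{\alpha^2 + \lambda/\nu}$ (principal branch, $\Re\mu>0$), the half-line resolvent kernel is a combination of $e^{-\mu|z-y|}$ and $e^{-\mu(z+y)}$, and imposing the Robin condition fixes the reflection coefficient to be $r(\lambda) = \frac{\mu - |\alpha|}{\mu + |\alpha|}$. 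Thus
\[
\widehat{G}_\alpha(\lambda,z;y) = \frac{1}{2\nu\mu}\Bigl( e^{-\mu|z-y|} + e^{-\mu(z+y)} \Bigr) + \frac{1}{2\nu\mu}\bigl( r(\lambda) - 1\bigr) e^{-\mu(z+y)},
\]
where the first bracket, upon inverse Laplace transform, is exactly the Neumann heat kernel $H_\alpha(t,y;z)$ (this is a standard computation: inverting $\frac{1}{2\nu\mu}e^{-\mu|z-y|}$ gives $\frac{1}{\sqrt{\nu t}}e^{-|y-z|^2/4\nu t}e^{-\alpha^2\nu t}$ up to the harmless constant). The residual is therefore $R_\alpha = \mathcal{L}^{-1}\bigl[\frac{1}{2\nu\mu}(r(\lambda)-1)e^{-\mu(z+y)}\bigr]$, and since $r(\lambda) - 1 = \frac{-2|\alpha|}{\mu + |\alpha|}$, we get the clean formula
\[
\widehat{R}_\alpha(\lambda,z;y) = \frac{-|\alpha|}{\nu\mu(\mu+|\alpha|)}\, e^{-\mu(z+y)}.
\]

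Next I would estimate $R_\alpha$ and its $z$-derivatives by deforming the inverse Laplace contour. Each $\partial_z$ brings down a factor $-\mu$, so $\partial_z^k \widehat{R}_\alpha = \frac{(-1)^{k+1}|\alpha|\mu^{k-1}}{\nu(\mu+|\alpha|)}e^{-\mu(z+y)}$. I would split the Bromwich integral into two regimes according to the relative size of $|\lambda|$ and $\nu\mu_f^2$, where $\mu_f = |\alpha| + \nu^{-1/2}$. The idea is: the symbol $\frac{|\alpha|\mu^{k-1}}{\nu(\mu+|\alpha|)}e^{-\mu(z+y)}$ is, roughly, $\nu^{-1}\mu^{k-1}e^{-\mu(z+y)}$ when $|\alpha|\gg|\mu|$ and $\nu^{-1}|\alpha|\mu^{k-2}e^{-\mu(z+y)}$ when $|\mu|\gg|\alpha|$; in either case it is dominated by a multiple of $\mu_f^{k+1}\nu \cdot (\mu/\mu_f)^{k-1}e^{-\mu(z+y)}$ near $\lambda=0$ (giving, after integrating the slowly-decaying low-frequency tail, the standing boundary-layer term $\mu_f^{k+1}e^{-\theta_0\mu_f|y+z|}$) and by a Gaussian-type contribution for large $|\lambda|$ (giving the $(\nu t)^{-(k+1)/2}e^{-\theta_0 |y+z|^2/\nu t}e^{-\alpha^2\nu t/8}$ term, via the standard contour-shift/stationary-phase bound for $\mathcal{L}^{-1}[e^{-\mu(z+y)}]$, with the $e^{-\alpha^2\nu t/8}$ coming from the decay of $e^{-\mu(z+y)}$ along the shifted contour and the constant $\tfrac18$ absorbing losses in Young's inequality on the exponents). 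Concretely, I would pick the contour $\Re\lambda = -\tfrac12\nu\alpha^2$ (so that $e^{-\alpha^2\nu t}$ is retained with a safety factor) together with a hyperbolic/parabolic part $|\Im\lambda| \sim \nu|\Re\mu|^2$ hugging the spectrum, on which $\Re\mu$ stays bounded below by $c\min\{\mu_f, (\nu t)^{-1/2}\}$-type quantities, and then bound $|\partial_z^k\widehat{R}_\alpha|$ on it directly, integrating $d\lambda$.

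The main obstacle I anticipate is obtaining the two distinct decay structures simultaneously and uniformly in $\nu$: the exponential-in-$\mu_f|y+z|$ boundary-layer decay and the Gaussian-in-$|y+z|^2/\nu t$ decay must both fall out of the single kernel $\frac{|\alpha|\mu^{k-1}}{\nu(\mu+|\alpha|)}e^{-\mu(z+y)}$, which forces a careful choice of contour that is parabola-like near the branch point $\lambda = -\nu\alpha^2$ (to see the $\sqrt{\nu t}$ scale and extract the Gaussian) but opens up to capture the genuine pole-free low-frequency behavior near $\lambda=0$ responsible for the $\mu_f$-scale standing layer; the bookkeeping of how $|\alpha|$ versus $\nu^{-1/2}$ dominates in $\mu_f$, and checking that the constant $\theta_0$ can be chosen independent of $\alpha$ and $\nu$, is the delicate part. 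Everything else — the closed-form resolvent, the identification of $H_\alpha$, the reduction via Lemma~\ref{lem-Duh} — is routine. Once the pointwise bounds on $\partial_z^k R_\alpha$ are in hand, the decomposition \eqref{Stokes-tG} and the stated estimates follow immediately.
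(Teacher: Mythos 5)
Your overall strategy --- solve the resolvent ODE in closed form, peel off the Neumann heat kernel, and estimate the residual by deforming the Bromwich contour --- is exactly the paper's approach. However, there is a concrete algebraic error at the very first step that derails the rest: your reflection coefficient has the wrong structure. Imposing $\partial_z G + |\alpha| G = 0$ at $z=0$ on $G = \frac{1}{2\nu\mu}e^{-\mu|z-y|} + Ae^{-\mu z}$ gives $A(\mu - |\alpha|) = \frac{\mu+|\alpha|}{2\nu\mu}e^{-\mu y}$, i.e.\ the reflection coefficient is $\frac{\mu+|\alpha|}{\mu-|\alpha|}$, not $\frac{\mu-|\alpha|}{\mu+|\alpha|}$ (the latter is what you would get for the opposite-sign Robin condition $\partial_z - |\alpha|$). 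Consequently the residual resolvent kernel is
$R_{\lambda,\alpha} = \frac{|\alpha|}{\nu\mu(\mu-|\alpha|)}\,e^{-\mu(y+z)} = \frac{|\alpha|(|\alpha|+\mu)}{\lambda\mu}\,e^{-\mu(y+z)}$,
which has a genuine pole at $\lambda=0$ (where $\mu=|\alpha|$), rather than your pole-free $\frac{-|\alpha|}{\nu\mu(\mu+|\alpha|)}e^{-\mu(y+z)}$.

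This is not a cosmetic slip. The pole at $\lambda=0$ reflects the fact that $\alpha e^{-|\alpha| z}$ is an exact stationary solution of the homogeneous Stokes problem with this boundary condition, and its residue, $2\alpha e^{-\alpha|y+z|}$, is precisely the source of the time-independent term $\mu_f^{k+1}e^{-\theta_0\mu_f|y+z|}$ in the stated bound. The paper's contour analysis accordingly splits into the cases $\alpha^2\nu\le 1$ (contour kept to the right of the origin, the $\mu_f$-term coming from the arc $\Gamma_c$) and $\alpha^2\nu\ge 1$ (contour pushed past the origin, explicitly collecting the residue when it crosses the pole). Your description of the low-frequency regime as ``genuine pole-free behavior near $\lambda=0$'' is incompatible with the correct kernel; with your (incorrect) kernel the inverse Laplace transform would decay in time and would not produce the standing boundary-layer term at all, so the bookkeeping you outline cannot reproduce the claimed estimate. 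Once the reflection coefficient is corrected and the pole is tracked, the remainder of your contour plan coincides with the paper's argument.
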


\begin{remark}
{\em We note that the residual term $R_\alpha(t,y;z)$ contains a term without viscous dissipation $e^{-\alpha^2 \nu t}$, and this is precisely due to the $|\alpha|$ term in the boundary condition in the linear Stokes problem \eqref{Stokes-Gr}. Observe that $\omega_\alpha = \alpha e^{-\alpha z}$ is an exact stationary solution to the linear homogenous Stokes problem \eqref{Stokes-a}. 
}
\end{remark}

\begin{remark}
{\em By the reflection method (e.g., \cite{Maekawa2}), the residual Green kernel can be explicitly defined by 
\begin{equation}\label{Gr-Mae} R_\alpha(t,y;z) = 2e^{-\alpha^2 \nu t}(\alpha^2+\alpha \partial_z)(-\triangle_{\alpha}^{-1})G(\nu t,y+z),\end{equation}
with $G(t,z) = \frac{1}{\sqrt{4\pi t}}e^{-z^2/4t}.$ The pointwise bounds as derived in Proposition \ref{prop-Stokes-Green} are in particular useful in propagating unbounded vorticity with boundary layer behaviors.  
}
\end{remark}

We proceed the construction of the Green function via the resolvent equation. Namely, for each fixed $y\ge 0$, let $G_{\lambda,\alpha}(y,z)$ be the $L^1$ solution to the resolvent problem 
\beq\label{Stokes-res}
\bega 
( \lambda - \nu \Delta_\alpha )G_{\lambda,\alpha}(y,z)&=\delta_{y}(z)\\
\nu (\partial_z + |\alpha|) G_{\lambda,\alpha}(y,0) &=0.
\enda 
\eeq 
We then obtain the following. 
\begin{lemma} Let $\mu = \nu^{-1/2}\sqrt{\lambda + \alpha^2 \nu}$, having positive real part. There holds 
\beq \label{green}
G_{\lambda,\alpha}(y,z) = H_{\lambda,\alpha}(y,z) + R_{\lambda,\alpha}(y,z)
\eeq 
in which $H_{\lambda,\alpha}(y,z) $ denotes the resolvent kernel of the heat problem with homogenous Neumann boundary condition; namely, 
$$H_{\lambda,\alpha}(y,z) = \frac{1}{2\nu\mu}(e^{-\mu|y-z|}+e^{-\mu |y+z|}), \qquad R_{\lambda,\alpha}(y,z) = \frac{\alpha(\alpha+\mu)}{\lambda \mu }e^{-\mu|y+z|}.$$
In particular, $G_{\lambda,\alpha}(y,z)$ is meromorphic with respect to $\lambda$ in $\CC \setminus \{ -\alpha^2 \nu - \RR_+\}$ with a pole at $\lambda =0$. 
\end{lemma}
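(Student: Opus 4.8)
The plan is to solve the resolvent problem \eqref{Stokes-res} explicitly by constructing the $L^1$ solution as a heat resolvent kernel plus a harmonic-type correction that absorbs the Robin-type boundary condition $\nu(\partial_z + |\alpha|)G_{\lambda,\alpha}(y,0) = 0$. First I would recall that the free resolvent kernel for $\lambda - \nu\Delta_\alpha$ on $\RR$ is $\frac{1}{2\nu\mu}e^{-\mu|y-z|}$ with $\mu = \nu^{-1/2}\sqrt{\lambda + \alpha^2\nu}$, $\Re\mu > 0$, since $(\lambda - \nu\Delta_\alpha) = \nu(\mu^2 - \partial_z^2)$ acting on this kernel produces $\delta_y$. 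The heat-Neumann resolvent $H_{\lambda,\alpha}(y,z) = \frac{1}{2\nu\mu}(e^{-\mu|y-z|} + e^{-\mu|y+z|})$ is then the standard even reflection: it solves the resolvent equation on $\RR_+$ (the reflected term $e^{-\mu|y+z|}$ is smooth across $z=y$ hence contributes no delta for $y>0$) and satisfies $\partial_z H_{\lambda,\alpha}(y,0) = 0$.

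Next I would write $G_{\lambda,\alpha} = H_{\lambda,\alpha} + R_{\lambda,\alpha}$ and demand that $R_{\lambda,\alpha}$ be a homogeneous solution of $(\mu^2 - \partial_z^2)R = 0$ on $\RR_+$ that decays as $z\to\infty$, so $R_{\lambda,\alpha}(y,z) = c(y)e^{-\mu z}$ for some coefficient $c(y)$ to be determined. Imposing the full boundary condition $\nu(\partial_z + |\alpha|)(H_{\lambda,\alpha} + R_{\lambda,\alpha})(y,0) = 0$ and using $\partial_z H_{\lambda,\alpha}(y,0) = 0$ together with $H_{\lambda,\alpha}(y,0) = \frac{1}{\nu\mu}e^{-\mu y}$, one gets $\nu|\alpha|\cdot\frac{1}{\nu\mu}e^{-\mu y} + \nu(-\mu + |\alpha|)c(y) = 0$, hence $c(y) = \frac{|\alpha|}{\nu\mu(\mu - |\alpha|)}e^{-\mu y}$. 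A short algebraic simplification — multiplying numerator and denominator by $(\mu + |\alpha|)$ and using $\mu^2 - \alpha^2 = \lambda/\nu$ — converts this into $c(y) = \frac{\alpha(\alpha + \mu)}{\lambda\mu}e^{-\mu y}$, giving exactly $R_{\lambda,\alpha}(y,z) = \frac{\alpha(\alpha+\mu)}{\lambda\mu}e^{-\mu|y+z|}$ (the modulus is harmless since $y,z\ge 0$). I should also check that $G_{\lambda,\alpha}(\cdot,z)\in L^1$: each exponential $e^{-\mu|y\pm z|}$ is integrable because $\Re\mu > 0$, and the prefactors are finite for $\lambda$ away from the pole.

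For the meromorphy claim I would observe that $\mu = \mu(\lambda)$ is a well-defined analytic function of $\lambda$ on $\CC \setminus (-\alpha^2\nu - \RR_+)$, the branch cut of the square root, with $\Re\mu > 0$ there; consequently $H_{\lambda,\alpha}$ and the factor $e^{-\mu|y+z|}$ and $\alpha+\mu$ are analytic on that domain. The only possible singularity of $R_{\lambda,\alpha}$ comes from the explicit $1/\lambda$ factor, which is a simple pole at $\lambda = 0$ (note $\mu(0) = |\alpha|$ is nonzero when $\alpha\ne 0$, so $\mu$ itself introduces no pole there), and $H_{\lambda,\alpha}$ has a removable behavior at $\lambda=0$ since $\mu(0)\ne 0$. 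Hence $G_{\lambda,\alpha}$ is meromorphic on $\CC\setminus(-\alpha^2\nu - \RR_+)$ with its only pole a simple one at $\lambda=0$, matching the stationary solution $\omega_\alpha = \alpha e^{-\alpha z}$ noted in the remark.

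The main obstacle is not conceptual but the algebraic bookkeeping: one must be careful with the sign conventions in $|\alpha|$ versus $\alpha$ (the statement uses $\alpha>0$ in the relevant regime so $|\alpha| = \alpha$), with the branch of $\mu$ so that $\Re\mu>0$ uniformly on the cut plane, and with verifying that the candidate $R_{\lambda,\alpha}$ truly satisfies the homogeneous resolvent equation on all of $\RR_+$ rather than producing a spurious delta at $z=y$ — this is why $R$ must be built from $e^{-\mu(y+z)}$ (which stays away from the diagonal for $y,z\ge 0$) rather than $e^{-\mu|y-z|}$. Once the coefficient $c(y)$ is pinned down and simplified, the representation \eqref{green} and the meromorphy statement both follow by inspection.
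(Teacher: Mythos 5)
Your proof is correct and fills in, via the standard even-reflection plus boundary-correction construction, exactly the details the paper compresses into its one-line remark that $G_{\lambda,\alpha}$ is a combination of $e^{\pm\mu z}$ satisfying the jump conditions at $z=y$ and the Robin condition at $z=0$; your algebra for $c(y)$ and the simplification using $\mu^2-\alpha^2=\lambda/\nu$ reproduce the stated $R_{\lambda,\alpha}$, and the meromorphy discussion (pole only from the explicit $1/\lambda$, with $\mu(0)=|\alpha|$ and the $\alpha=0$ mode trivially regular) is sound. This is essentially the same approach as the paper's, just written out in full.
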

\begin{proof} The construction is standard, upon noting that $G_{\lambda,\alpha}(y,z)$ is a linear combination of $e^{\pm \mu z}$ and satisfies the following jump conditions across $z=y$:
$$ [G_{\lambda,\alpha}(y,z)]_{\vert_{z=y}} = 0, \qquad [\nu \partial_z G_{\lambda,\alpha}(y,z)]_{\vert_{z=y}} = 1.$$
The lemma follows. \end{proof}

\begin{proof}[Proof of Proposition \ref{prop-Stokes-Green}]
The temporal Green function $G_\alpha(t,z;y)$ can then be constructed via the inverse Laplace transform: 
\begin{equation}\label{Stokes-Green} G_{\alpha}(t,y;z)=\frac{1}{2\pi i}\int_\Gamma e^{\lambda t}G_{\lambda,\alpha}(y,z)d\lambda\end{equation}
in which the contour of integration $\Gamma$ is taken such that it remains on the right of the (say, $L^2$) spectrum of the linear operator $\lambda - \nu \Delta_\alpha$, which is $-\alpha^2 \nu - \RR_+$. 

In view of \eqref{green}, we set $H_{\alpha}(t,y;z)$ and $R_{\alpha}(t,y;z)$ to be the corresponding temporal Green function of $H_{\lambda,\alpha}(y,z)$
 and $R_{\lambda,\alpha}(y,z)$, respectively. It follows that $H_{\alpha}(t,y;z)$ is the temporal Green function of the one-dimensional heat problem with the homogenous Neumann boundary condition, yielding 
 $$H_{\alpha}(t,y;z) = \frac{1}{\sqrt{4\pi \nu t}} \Big( e^{-\frac{|y-z|^2}{4\nu t}} + e^{-\frac{|y+ z|^2}{4\nu t}} \Big) e^{-\nu \alpha^2 t}.$$
It remains to compute the residual Green function $R_\alpha(t,y;z)$:
\begin{equation}\label{def-Ra}R_{\alpha}(t,y;z)=\frac{1}{2\pi i}\int_\Gamma e^{\lambda t} e^{-\mu|y+z|} \frac{\alpha(\alpha+\mu)}{\lambda \mu } \; d\lambda .\end{equation}
Note that the $z$-derivative of $R_\alpha (t,y;z) $ gains an extra $\mu$ in the above integral. 

~\\
{\bf Case 1: $\alpha^2 \nu \le 1$.} By the Cauchy's theory, we may decompose the contour of integration as $\Gamma = \Gamma_\pm \cup \Gamma_c$, having 
\begin{equation}\label{def-Gc} \begin{aligned}
\Gamma_\pm : &= \Big\{ \lambda  =- \frac12\nu \alpha^2 + \nu (a^2 - b^2 ) + 2\nu ia b \pm iM , \quad \pm b \in \RR_+\Big\} ,
\\
\Gamma_c : &= \Big\{ \lambda  = - \frac12 \nu \alpha^2 + \nu a^2 + M e^{i\theta}, \quad \theta \in [-\pi/2, \pi/2]\Big\} ,
\end{aligned}
\end{equation}
for some positive number $M$ and for $a =\frac{|y+z|}{2\nu t} $. Since $\alpha^2\nu \le 1$, we take $M$ large enough so that the pole $\lambda=0$ remains on the left of the contour $\Gamma$. It is clear that $|\lambda |\gtrsim 1$ on $\Gamma$. 
 
On $\Gamma_c$, we note that 
$$
\begin{aligned}
\Re \mu &=  \nu^{-1/2} \Re \sqrt{\frac12 \nu \alpha^2 + \nu a^2 + M e^{i\theta}} \ge \nu^{-1/2} \sqrt{\frac12 \nu \alpha^2 + \nu a^2}  \ge a,
\\
\Re \mu &=  \nu^{-1/2} \Re \sqrt{\frac12 \nu \alpha^2 + \nu a^2 + M e^{i\theta}} \ge \nu^{-1/2} \sqrt{M}
.\end{aligned}$$
This implies that $\Re \mu \ge \frac a2 + \theta_0 \mu_f $ for some positive constant $\theta_0$, recalling $\mu_f = \alpha + 1/\sqrt \nu$  and $\alpha \le \nu^{-1/2}$. In particular, $|\mu| \gtrsim \mu_f \ge \alpha$.  This proves that 
$$ \begin{aligned}
\Big |\int_{\Gamma_c} e^{\lambda t} e^{-\mu|y+z|} \frac{\alpha(\alpha+\mu)}{\lambda \mu } \; d\lambda\Big| 
&\lesssim \int_{-\pi/2}^{\pi/2} e^{Mt} e^{ a^2 \nu t} e^{ - \frac a2 |y+z|} e^{-\theta_0 \mu_f|y+z| } \mu_f  d\theta  
\\
&\lesssim \mu_f e^{-\theta_0 \mu_f|y+z| } e^{ a^2 \nu t} e^{ - \frac a2 |y+z|} 
\\
&\lesssim \mu_f e^{-\theta_0 \mu_f|y+z| } ,
\end{aligned}$$ 
in which we used $e^{ a^2 \nu t} e^{ - \frac a2 |y+z|}   = 1$ by definition of $a$. As for derivatives, we estimate  
$$ \begin{aligned}
\Big |\int_{\Gamma_c} e^{\lambda t} e^{-\mu|y+z|} \frac{\alpha(\alpha+\mu)}{\lambda} \; d\lambda\Big| & = \Big |\int_{\Gamma_c} e^{\lambda t} e^{-\mu|y+z|} \frac{\alpha}{\nu (\mu - \alpha)} \; d\lambda\Big| 
\\&\lesssim \nu^{-1} \int_{-\pi/2}^{\pi/2} e^{Mt} e^{ a^2 \nu t} e^{ - \frac a2 |y+z|} e^{-\theta_0 \mu_f|y+z| } d\theta  
\\
&\lesssim \mu_f^2 e^{-\theta_0 \mu_f|y+z| } ,
\end{aligned}$$ 
upon recalling $\mu_f = \alpha + 1/\sqrt \nu$.

Meanwhile, on $\Gamma_\pm$, we note that  
$$
\begin{aligned}\Re\mu &= \Re \sqrt{\frac12 \alpha^2 + (a+ib)^2 \pm i \nu^{-1} M} \ge \Re \sqrt{(a+ib)^2} = a,\end{aligned}$$
upon noting that the sign of $b$ and $\pm M$ is the same on $\Gamma_\pm$. Similarly, we note that $\Re \mu \gtrsim M/\sqrt \nu$. 
By definition of $a$, we have 
$$ | e^{\lambda t} e^{-\mu|y+z|} |\le  e^{-\frac12\nu \alpha^2 t} e^{-\frac{|y+z|^2}{4\nu t}} e^{-\nu b^2 t},$$ 
and together with the fact that $\lambda = \nu (\mu^2 - \alpha^2)$, we compute 
$$\frac{\alpha(\alpha+\mu)}{\lambda \mu } \; d\lambda  =\frac{2i\alpha (a+ib) }{\mu (\mu - \alpha)} db.$$
Since $\alpha^2 \nu\lesssim 1$, we have $\alpha \lesssim |\mu|$. In addition, we have $(a+ib)^2 \pm i\nu^{-1/2}M = \mu^2 - \alpha^2$  on $\Gamma_\pm$ with $b$ having the same sign as does $\pm   M$. This implies that $|a+ib|^2 \le |\mu^2 - \alpha^2| \lesssim |\mu|^2$.  Putting the above computations together, we obtain 
$$
\begin{aligned}
\Big |\int_{\Gamma_\pm} e^{\lambda t} e^{-\mu|y+z|} \frac{\alpha(\alpha+\mu)}{\lambda \mu } \; d\lambda\Big|  
&\le  C_0 e^{-\frac12\nu \alpha^2 t} e^{-\frac{|y+z|^2}{4\nu t}} \int_\RR e^{-\nu b^2 t} \frac{\alpha |a+ib| }{|\mu (\mu - \alpha)|} db 
\\
&\le  C_0 e^{-\frac12\nu \alpha^2 t} e^{-\frac{|y+z|^2}{4\nu t}} \int_\RR e^{-\nu b^2 t} db 
\\&\le  C_0 (\nu t)^{-1/2}e^{-\frac12\nu \alpha^2 t} e^{-\frac{|y+z|^2}{4\nu t}} .\end{aligned} $$
As for derivatives, we estimate 
$$
\begin{aligned}
\Big |\int_{\Gamma_\pm} e^{\lambda t} e^{-\mu|y+z|} \frac{\alpha(\alpha+\mu)}{\lambda } \; d\lambda\Big|  
&\le  C_0 e^{-\frac12\nu \alpha^2 t} e^{-\frac{|y+z|^2}{4\nu t}} \int_\RR e^{-\nu b^2 t}  (a + |b|)db 
\\&\le  C_0 (\nu t)^{-1} \Big( 1 + \frac{|x+y|}{\sqrt{\nu t}}\Big) e^{-\frac12\nu \alpha^2 t} e^{-\frac{|y+z|^2}{4\nu t}} 
\\&\le  C_0 (\nu t)^{-1}e^{-\frac12\nu \alpha^2 t} e^{-\frac{|y+z|^2}{8\nu t}} 
.\end{aligned} $$

~\\
{\bf Case 2: $\alpha^2 \nu \ge 1$.} Take $a =\frac{|y+z|}{2\nu t} $ as in the previous case. Consider first the case when $|a-\alpha|\ge \frac12\alpha$. In this case, we move the contour of integration to  
$$\Gamma_1 := \Big\{ \lambda  =- \nu \alpha^2 + \nu (a^2 - b^2 ) + 2\nu ia b, \quad \pm b \in \RR_+\Big\} $$
which may pass the pole at $\lambda =0$ (precisely, it does when $a =\alpha$). 
By the Cauchy's theory, we have $$R_{\alpha}(t,y;z)=\frac{1}{2\pi i}\int_{\Gamma_1} e^{\lambda t} e^{-\mu|y+z|} \frac{\alpha(\alpha+\mu)}{\lambda \mu } \; d\lambda  + \mathrm{Res}_0$$
in which the residue at the pole $\lambda =0$ is given by \begin{equation}\label{residue} \mathrm{Res}_0 = 2\alpha e^{-\alpha |y+z|} \end{equation}
when $a<\alpha$. We take $ \mathrm{Res}_0 =0$ when $a>\alpha$. Note that the residue does not decay in time. This accounts for the contribution of the inhomogenous Neumann boundary condition. Since $\alpha^2 \nu \ge1$, we have $\mu_f = \alpha + 1/\sqrt \nu \le 2\alpha$, and hence 
$$ \mathrm{Res}_0 \le 2\mu_fe^{-\frac12\mu_f |y+z|}.$$

As for the integral term, we note that $\mu = a + ib$ and hence 
$$ \frac{\alpha(\alpha+\mu)}{\lambda \mu } \; d\lambda = \frac{2i \nu \alpha (\alpha + \mu)}{\lambda}db.$$
Note that $\Gamma_1$ cuts the real axis at $\nu (a^2 - \alpha^2)$ and the imaginary axis at $2\nu ab_0$ (when $a>\alpha$), with $b_0 = \pm\sqrt{a^2-\alpha^2}$.  This in particular yields $|\lambda|\gtrsim \nu \alpha (a+\alpha)$ and $|\mu| =\nu^{-1/2}|\sqrt{\lambda + \alpha^2 \nu}|\lesssim \nu^{-1/2}|\lambda|^{1/2}. $ In particular, 
$$ \Big|\frac{\alpha(\alpha+\mu)}{\lambda \mu } \; d\lambda \Big| \le db.$$
We thus obtain 
\begin{equation}\label{est-G1}
\begin{aligned}
\Big |\int_{\Gamma_1} e^{\lambda t} e^{-\mu|y+z|} \frac{\alpha(\alpha+\mu)}{\lambda \mu } \; d\lambda\Big|  
&\le  C_0 e^{-\nu \alpha^2 t} e^{-\frac{|y+z|^2}{4\nu t}} \int_\RR e^{-\nu b^2 t} db 
\\&\le  C_0 (\nu t)^{-1/2}e^{-\nu \alpha^2 t} e^{-\frac{|y+z|^2}{4\nu t}} .\end{aligned} \end{equation}

It remains to consider the case when $|a-\alpha| \le \frac 12\alpha$ and $\alpha^2 \nu \ge 1$. We note in particular that $\frac 12 \alpha \le a\le \frac 32\alpha$. In this case, we simply modify the contour of integration as follows: we take 
$$\Gamma_1 := \Big\{ \lambda  =- \frac18\nu \alpha^2 + \nu (a^2 - b^2 ) + 2\nu ia b, \quad \pm b \in \RR_+\Big\} .$$
Observe that the contour $\Gamma_1$ leaves the origin on the left, with $|\lambda|\gtrsim \nu \alpha^2$. The integral is thus estimated exactly as done in \eqref{est-G1}.  The derivative estimates follow as in the previous case. 

The proof of Proposition \ref{prop-Stokes-Green} is complete. 
\end{proof}

\subsection{The Green function on $\Omega_{\sigma}$}\label{sec-Grcomplex}
The Green function constructed in Proposition \ref{prop-Stokes-Green} can be easily extended to the complex domain $\Omega_{\sigma}$ defined by 
$$
\Omega_{\sigma}= \Big\{z\in\mathbb{C}:\quad|\Im z|<\min\{\sigma|\Re z|,\sigma \}\Big\},
$$
for some small $\sigma>0$. Indeed, in view of \eqref{Gr-Mae}, the Green function involves precisely the heat kernel $G(t,z) = \frac{1}{\sqrt{4\pi t}} e^{-z^2 / 4t}$, which is extended to the complex domain. In addition, we note that for $z\in \Gamma_{\sigma}$, there holds $\Im z \le \sigma \Re z$, which implies that 
$$ |e^{-z^2 / 4t}| \le e^{- |\Re z|^2/4t + |\Im z|^2 / 4t} \le e^{- (1-\sigma^2) |\Re z|^2 / 4t}.$$
Similar estimates hold for the other terms in the Green function $G_{\alpha}(t,y;z) =  H_\alpha(t,y;z) + R_\alpha (t,y;z)$, yielding \begin{equation}\label{Stokes-Gr-complex}
\begin{aligned}
H_\alpha(t,y;z) &\lesssim \frac{1}{\sqrt{\nu t}} \Big( e^{-(1-\sigma^2)\frac{|\Re y- \Re z|^{2}}{4\nu t}} +  e^{-(1-\sigma^2)\frac{|\Re y+\Re z|^{2}}{4\nu t}} \Big) e^{-\frac{1}{8}\alpha^{2}\nu t}, 
\\R_\alpha (t,y;z) &\lesssim \mu_f e^{-\theta_0(1-\sigma)\mu_f |\Re y+\Re z|} ,
\end{aligned}\end{equation}
for $y,z \in \Gamma_{\sigma}$, and for some $\theta_0>0$ and for $\mu_f = |\alpha| + \frac{1}{\sqrt \nu}$. 

The solution $\omega_\alpha(t,z)$ to the Stokes problem can now be constructed on $\Omega_\sigma$ in the similar manner as done in \eqref{laplace-3}. Precisely, for any $z\in \Omega_\sigma$, let $\theta$ be the positive constant so that $z\in \partial \Omega_\theta$. The Duhamel principle \eqref{Duh-Stokes} then becomes 
 \begin{equation}\label{Duh-Stokes-a}
\begin{aligned}
 \omega_\alpha(t,z) &= \int_{\partial\Omega_\theta}G_\alpha(t,y;z) \omega_{0,\alpha} (y) \; dy + \int_0^t G_\alpha(t-s, 0; z) g_\alpha (s)\; ds
 \\&\quad+ \int_0^t  \int_{\partial\Omega_\theta} G_\alpha(t-s,y;z) f_\alpha (s,y)\; dyds,
\end{aligned} \end{equation}
which is well-defined for $z\in \Omega_\sigma$, having the Green function $G_\alpha(t,y;z) $ satisfies the pointwise estimates \eqref{Stokes-Gr-complex}, similar to those on the real line. For this reason, it suffices to derive convolution estimates for real values $y,z$.

\subsection{Convolution estimates}\label{sec-convS}

We now derive convolution estimates. We start with the analytic $L^1$ norms. For $k\ge 0$, we introduce   
$$ \| \omega_\alpha\|_{\mathcal{W}^{k,1}_{\sigma}} = \sum_{j=0}^k \|(\psi(z)\partial_z)^j \omega_\alpha \|_{L^1_\sigma} .$$ 
We prove the following.

\begin{proposition}\label{prop-Stokes-convL1} Let $T>0$ and let $G_{\alpha}(t,y;z)$ be the Green function of the Stokes problem \eqref{Stokes-a}, constructed in Proposition \ref{prop-Stokes-Green}. 
Then, for any $0\le s <  t\le T$ and $k\ge 0$, there is a universal constant $C_T$ so that 
$$
\begin{aligned}
\Big \| \int_0^\infty G_\alpha(t,y;\cdot) \omega_\alpha(y)\; dy \Big\|_{\mathcal{W}^{k,1}_{\sigma}} &\le C_T   \| \omega_\alpha\|_{\mathcal{W}^{k,1}_{\sigma}},
\\
\Big \| \int_0^\infty G_\alpha(t-s,y;\cdot) \omega_\alpha(y)\; dy \Big\|_{\mathcal{W}^{k,1}_{\sigma}} &\le C_T  \| \omega_\alpha\|_{\mathcal{W}^{k,1}_{\sigma}},
\end{aligned}$$
uniformly in the inviscid limit.  
\end{proposition}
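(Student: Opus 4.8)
The plan is to reduce everything to two elementary facts: first, the Green function $G_\alpha(t,y;z) = H_\alpha(t,y;z) + R_\alpha(t,y;z)$ splits into a heat part and a residual part obeying the pointwise bounds of Proposition \ref{prop-Stokes-Green}; second, convolution against an $L^1$ kernel in $z$ (uniformly in the parameters $\alpha,\nu,t$) is bounded on $L^1_\sigma$. Since the bound is claimed uniformly in the inviscid limit, I will carry constants that depend only on $T$ (and $\sigma$, $\theta_0$, $P$, $\beta$), never on $\nu$, $\alpha$, or $s,t$. By the discussion in Section \ref{sec-Grcomplex}, it suffices to work with real $y,z$, because on $\partial\Omega_\theta$ the Gaussian and exponential factors only pick up harmless $(1-\sigma^2)$ and $(1-\sigma)$ improvements; the $L^1_\sigma$ norm is a supremum over $\theta<\sigma$ of integrals over $\partial\Omega_\theta$, and for each such $\theta$ the same kernel estimates apply.

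The key steps, in order. \emph{Step 1 (the $k=0$ case).} For the heat part, $\int_0^\infty H_\alpha(t,y;z)\,dy \le \int_\RR \frac{1}{\sqrt{\nu t}} e^{-c|y-z|^2/(\nu t)}\,dy \cdot e^{-\alpha^2\nu t} \lesssim 1$ after the substitution $y\mapsto (y-z)/\sqrt{\nu t}$, and similarly integrating in $z$; by Schur's test (or Young's inequality with the kernel viewed as a convolution in the reflected variable) the map $\omega_\alpha \mapsto \int_0^\infty H_\alpha(t,y;\cdot)\omega_\alpha(y)\,dy$ is bounded on $L^1_\sigma$ with constant $O(1)$. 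For the residual part, the bound $|R_\alpha(t,y;z)| \lesssim \mu_f e^{-\theta_0\mu_f|y+z|} + (\nu t)^{-1/2} e^{-\theta_0|y+z|^2/(\nu t)} e^{-\frac18\alpha^2\nu t}$ gives $\int_0^\infty |R_\alpha(t,y;z)|\,dy \lesssim 1$ (the first term integrates to $O(1)$ since $\mu_f\int_0^\infty e^{-\theta_0\mu_f\tau}\,d\tau = \theta_0^{-1}$; the second is a rescaled Gaussian), and the same bound holds integrating in $z$, so again Schur's test yields $L^1_\sigma$-boundedness with a constant depending only on $\theta_0$. Combining, the $k=0$ estimates follow, and the $t-s$ version is identical since only $t-s\le T$ enters.

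\emph{Step 2 (commuting $(\psi(z)\partial_z)^j$ through the convolution).} For $k\ge 1$ I need to control $(\psi(z)\partial_z)^j$ applied to the convolution. The natural move is to integrate by parts in $y$ to transfer $z$-derivatives of the kernel onto $y$-derivatives of $\omega_\alpha$; concretely, using $\partial_z G_\alpha(t,y;z)$ against $\omega_\alpha(y)$ and the relation between $\partial_z$ and $\partial_y$ of the kernels (the heat kernel depends on $y\pm z$, and $R_\alpha$ on $y+z$, so a $z$-derivative is, up to sign, a $y$-derivative), one rewrites $(\psi(z)\partial_z)^j \int G_\alpha(t,y;z)\omega_\alpha(y)\,dy$ as a finite sum of terms of the form $\psi(z)^j \int (\partial_z^{a} K_\alpha)(t,y;z)\, \partial_y^{b}\omega_\alpha(y)\,dy$ with $a+b\le j$, handling the boundary term at $y=0$ via the Neumann-type structure of $H_\alpha$ and the explicit decay of $R_\alpha$; then one re-expresses $\partial_y^b$ in terms of $(\psi(y)\partial_y)^b$ plus lower-order terms (since $\psi(y)^{-1} = 1 + y^{-1}$ is only mildly singular and the kernels carry enough decay in $y$). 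The derivative bounds on $R_\alpha$ from Proposition \ref{prop-Stokes-Green}, namely $|\partial_z^k R_\alpha| \lesssim \mu_f^{k+1} e^{-\theta_0\mu_f|y+z|} + (\nu t)^{-(k+1)/2} e^{-\theta_0|y+z|^2/(\nu t)} e^{-\frac18\alpha^2\nu t}$, show that each extra $\partial_z$ costs a factor $\mu_f$ or $(\nu t)^{-1/2}$ — but this is exactly compensated by the $\psi(z)\approx z$ prefactor: on the support $|y+z|\lesssim \mu_f^{-1}$ one has $z\lesssim\mu_f^{-1}$, so $\psi(z)\mu_f \lesssim 1$, and on the Gaussian part the usual parabolic scaling $|z|(\nu t)^{-1/2}\lesssim 1$ on the bulk (with the excess absorbed by slightly shrinking the Gaussian exponent, from $\theta_0$ to $\theta_0/2$). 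After this bookkeeping, each resulting kernel is again $L^1$ in $y$ and in $z$ uniformly, Step 1's Schur argument applies term by term, and summing over $j\le k$ gives $\| \cdot \|_{\mathcal{W}^{k,1}_\sigma} \lesssim_{T,k} \| \omega_\alpha\|_{\mathcal{W}^{k,1}_\sigma}$.

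The main obstacle is Step 2: making the integration-by-parts identity precise while controlling the interplay between the singular weight $\psi(z)^{-1}$ near $z=0$, the boundary terms at $y=0$, and the fact that a $z$-derivative of the Green function is genuinely larger (by $\mu_f$ or $(\nu t)^{-1/2}$) than the function itself. The resolution is that the boundary-layer weight $\psi(z)$ in the definition of $\mathcal{W}^{k,1}_\sigma$ is precisely the right weight to cancel this growth — this is the structural reason the proposition is true — but verifying that the cancellation survives all the cross terms generated by Leibniz expansion of $(\psi(z)\partial_z)^j$ requires care, and is where the bulk of the (routine but lengthy) estimates lives.
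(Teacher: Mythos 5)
Your overall strategy coincides with the paper's: split $G_\alpha=H_\alpha+R_\alpha$, dispose of $k=0$ by the uniform $L^1$ bound on the kernel, absorb the derivative loss on $R_\alpha$ by the weight $\psi(z)\approx z$ (indeed $|z|\mu_f^2 e^{-\theta_0\mu_f|y+z|}\lesssim \mu_f e^{-\frac12\theta_0\mu_f|y+z|}$ and $|z|(\nu t)^{-1}e^{-\theta_0|y+z|^2/\nu t}\lesssim(\nu t)^{-1/2}e^{-\theta_0|y+z|^2/2\nu t}$, exactly as you say), and for $H_\alpha$ use $\partial_z H=-\partial_y H$ together with an integration by parts in $y$. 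Step 1 and the treatment of $R_\alpha$ are correct as written.

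There is, however, a concrete gap in your Step 2 for the heat part. You propose to integrate by parts over all of $(0,\infty)$, pick up a boundary term at $y=0$, and then convert $\partial_y^b\omega_\alpha$ back into $(\psi(y)\partial_y)^b\omega_\alpha$ on the grounds that ``$\psi(y)^{-1}=1+y^{-1}$ is only mildly singular and the kernels carry enough decay in $y$.'' This fails in the regime $y,z\to 0$, which is precisely the boundary-layer regime the norm is designed for: the operator with kernel $(\nu t)^{-1/2}e^{-|y-z|^2/4\nu t}\,y^{-1}$ is not uniformly bounded on $L^1$ (its Schur bound $\sup_z\int_0^1(\nu t)^{-1/2}e^{-|y-z|^2/4\nu t}y^{-1}\,dy$ diverges), and the norm $\|\omega_\alpha\|_{\mathcal{W}^{k,1}_\sigma}$ gives you no control of $\partial_y\omega_\alpha$ near $y=0$ without the vanishing weight. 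The paper's resolution is a splitting of the $y$-integral at $y=z/2$ rather than a global integration by parts: on $[z/2,\infty)$ one has $\psi(z)\le 2\psi(y)$, so after integrating by parts the transferred derivative appears already in conormal form $\psi(y)\partial_y\omega_\alpha$ and Step~1 applies verbatim; on $[0,z/2]$ one keeps the derivative on the kernel and uses $\psi(z)\le z\le 2|y-z|$ so that $\psi(z)\partial_zH\lesssim|y-z|(\nu(t-s))^{-1}e^{-|y-z|^2/8\nu(t-s)}\lesssim(\nu(t-s))^{-1/2}e^{-|y-z|^2/16\nu(t-s)}$, again a Step~1 kernel; and the resulting boundary term sits at $y=z/2$, where $|\psi(z)H(t-s,z/2;z)\omega_\alpha(z/2)|\lesssim|\omega_\alpha(z/2)|$, whose $L^1_\sigma$ norm is bounded by $\|\omega_\alpha\|_{L^1_\sigma}$ after rescaling. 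Your write-up needs this (or an equivalent) localization; without it the asserted cancellation does not go through near the corner $y=z=0$.
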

\begin{proof} We shall prove the convolution for real values $y,z$. For the complex extension, see Section \ref{sec-Grcomplex}. 
Recall from  Proposition \ref{prop-Stokes-Green} that $
G_\alpha(t,y;z) = H_\alpha(t,y;z) + R_\alpha (t,y;z), 
$
with $$
\begin{aligned}
H_\alpha(t,y;z) & = \frac{1}{\sqrt{\nu t}} \Big( e^{-\frac{|y-z|^{2}}{4\nu t}} +  e^{-\frac{|y+z|^{2}}{4\nu t}} \Big) e^{-\alpha^{2}\nu t}, 
\\ |\partial_z^k R_\alpha (t,y;z)| &\lesssim \mu^{k+1}_f e^{-\theta_0\mu_f |y+z|} + (\nu t)^{-\frac{k+1}{2}} e^{-\theta_0\frac{|y+z|^{2}}{\nu t}}  e^{-\frac18 \alpha^{2}\nu t} 
\end{aligned}$$
for $k\ge 0$. In particular, $\|G_\alpha(t,y;\cdot)\|_{L^1_\sigma} \lesssim 1$, for each fixed $y,t$. The $L^1$ convolution estimate is thus straightforward. We now check the estimates for derivatives. We estimate $$\begin{aligned}
 \psi(z)\partial_z R_\alpha (t,y;z) &\lesssim |z| \mu^2_f e^{-\theta_0\mu_f |y+z|} +  \frac{|z|}{\nu t}  e^{-\theta_0\frac{|y+z|^{2}}{\nu t}}  e^{-\frac18 \alpha^{2}\nu t}  
 \\
 &\lesssim \mu_f e^{-\frac12 \theta_0\mu_f |y+z|} +  \frac{1}{\sqrt{\nu t}}  e^{-\theta_0\frac{|y+z|^{2}}{2\nu t}}  e^{-\frac18 \alpha^{2}\nu t}  .
 \end{aligned}$$
That is, $ \psi(z)\partial_z R_\alpha (t,y;z) $ obeys essentially the same bound as does $R_\alpha(t,y;z)$. The convolution estimates for derivatives of $R_\alpha(t,y;z)$ follow. 

Next, we treat the integral involving $H_\alpha(t,y;z)$. Precisely, we set $H(t,y;z) = \frac{1}{\sqrt{\nu t}} e^{-\frac{|y-z|^{2}}{4\nu t}} $. 
Note that $\partial_z H(t,y;z) = - \partial_y H(t,y;z)$. Hence, we compute 
$$\begin{aligned}
&\int_0^\infty \psi(z)\partial_zH(t-s,y;\cdot) \omega_\alpha(y)\; dy 
\\&= \int_0^{z/2} \psi(z)\partial_zH(t-s,y;\cdot) \omega_\alpha(y)\; dy
- \psi(z) H(t-s,z/2;z) \omega_\alpha(z/2)
\\&\quad + \int_{z/2}^\infty \psi(z)H(t-s,y;\cdot) \partial_y\omega_\alpha(y)\; dy  .
\end{aligned}$$
We now estimate each term on the right. Since $\psi(z) \le 2 \psi(y)$ for $y\ge z/2$, the last integral on the right is already estimated in the previous case with $\omega_\alpha(y)$ replaced by $\psi(y)\partial_y \omega_\alpha(y)$. As for the first integral, since $y\le z/2$, we compute  
$$
\begin{aligned}\psi(z)\partial_z H(t-s,y;z) &\lesssim \frac{z}{1+z}(\nu (t-s))^{-1} e^{-\frac{|y-z|^2}{8\nu (t-s)}} 
\\&\lesssim |y-z|(\nu (t-s))^{-1} e^{-\frac{|y-z|^2}{8\nu (t-s)}}
\\& \lesssim (\nu (t-s))^{-1/2} e^{-\frac{|y-z|^2}{16\nu (t-s)}}.
\end{aligned}$$ 
Thus, the integral over $[0,z/2]$ is again already estimated in the previous case. Finally, we compute 
$$
\begin{aligned}
| \psi(z) H(t-s,z/2;z) \omega_\alpha(z/2)| 
&\lesssim z (\nu (t-s))^{-1/2} e^{-\frac{|z|^2}{16\nu (t-s)}}  |\omega_\alpha(z/2)|  \lesssim  |\omega_\alpha(z/2)| ,
\end{aligned}$$
whose $L^1_\sigma$ norm is clearly bounded by $\|\omega_\alpha \|_{L^1_\sigma}$. 
\end{proof}

\subsection{Convolution estimates with boundary layer behaviors}
In this section, we provide the convolution estimates of the Green function against functions in the boundary layer spaces, whose norm is defined by 
\begin{equation}\label{def-blnormFull123}
\| \omega_\alpha\|_{\sigma,\delta(t)}  = \sup_{z\in \Omega_\sigma} | \omega_\alpha(z) | e^{\beta \Re z}
\Bigl( 1 + \delta_t^{-1} \phi_{P} (\delta_t^{-1} z)  +  \delta^{-1} \phi_{P} (\delta^{-1} z)  \Bigr)^{-1} ,
\end{equation}
for $t>0$ and $\beta>0$, in which the boundary thicknesses are $\delta_t = \sqrt{\nu t}$ and $\delta = \sqrt \nu $ and for boundary layer weight $\phi_P(z) = \frac{1}{1+|\Re z|^P}$, $P>1$. We also introduce the boundary norm for derivatives: 
$$ \| \omega_\alpha\|_{\sigma,\delta(t),k} = \sum_{j=0}^k \|(\psi(z)\partial_z)^j \omega_\alpha \|_{\sigma,\delta(t)} $$ 
for $k\ge 0$. In the case $t=0$, the norm $\|\cdot \|_{\sigma,\delta(0)}$ is defined to consist of precisely one boundary layer with thickness $\delta = \sqrt \nu$.

We prove the following.

\begin{proposition}\label{prop-Stokes-conv} Let $T>0$ and let $G_{\alpha}(t,y;z)$ be the Green function of the Stokes problem \eqref{Stokes-a}, constructed in Proposition \ref{prop-Stokes-Green}. 
Then, for any $0\le s <  t\le T$ and $k\ge 0$, there is a universal constant $C_T$ so that 
$$
\begin{aligned}
\Big \| \int_0^\infty G_\alpha(t,y;\cdot) \omega_\alpha(y)\; dy \Big\|_{\sigma, \delta(t),k} &\le C_T   \| \omega_\alpha\|_{\sigma, \delta(0),k},
\\
\Big \| \int_0^\infty G_\alpha(t-s,y;\cdot) \omega_\alpha(y)\; dy \Big\|_{\sigma, \delta(t),k} &\le C_T \sqrt{\frac ts}  \| \omega_\alpha\|_{\sigma, \delta(s),k},
\end{aligned}$$
uniformly in the inviscid limit.  
\end{proposition}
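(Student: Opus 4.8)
The plan is to reduce to the real-line convolution (the complex extension to $\Omega_\sigma$ being handled exactly as in Section \ref{sec-Grcomplex}), and then split the Green function $G_\alpha = H_\alpha + R_\alpha$ into the heat part and the residual part, estimating each against the boundary layer weight $w_t(z) := 1 + \delta_t^{-1}\phi_P(\delta_t^{-1} z) + \delta^{-1}\phi_P(\delta^{-1} z)$. The key algebraic fact to exploit is that $w_t$ is, up to constants, monotone-like in the sense that its convolution against a Gaussian at scale $\sqrt{\nu t}$ — which is \emph{the} scale of the new boundary layer $\delta_t$ — reproduces $w_t$ with a loss of at most $\sqrt{t/s}$ when the initial data carries the weight $w_s$. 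So the first step is to write, for $\omega_\alpha$ with $|\omega_\alpha(y)| \le \|\omega_\alpha\|_{\sigma,\delta(s)} e^{-\beta y} w_s(y)$,
\[
\Big| \int_0^\infty H_\alpha(t-s,y;z)\omega_\alpha(y)\,dy \Big| \le \|\omega_\alpha\|_{\sigma,\delta(s)} \int_0^\infty \frac{1}{\sqrt{\nu(t-s)}}\Big(e^{-\frac{|y-z|^2}{4\nu(t-s)}} + e^{-\frac{|y+z|^2}{4\nu(t-s)}}\Big) e^{-\alpha^2\nu(t-s)} e^{-\beta y} w_s(y)\,dy,
\]
and to verify the pointwise bound $e^{\beta z}$ times this is $\lesssim \sqrt{t/s}\, w_t(z)$. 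For the $1$ inside $w_s$ this is just mass conservation of the heat kernel (with the $e^{-\beta y}$ absorbing a constant after completing the square, using $t \le T$); for the $\delta^{-1}\phi_P(\delta^{-1}y)$ term the Gaussian at scale $\sqrt{\nu(t-s)} \le \sqrt{\nu t}$ is no wider than $\delta_t$, so convolving the bump of width $\delta = \sqrt\nu$ against it gives back $\lesssim \delta^{-1}\phi_P(\delta^{-1}z) + \delta_t^{-1}\phi_P(\delta_t^{-1}z)$ after elementary pointwise Gaussian-versus-algebraic estimates; the factor $\sqrt{t/s}$ arises precisely from comparing $\delta_s^{-1}$ (present in $w_s$) to $\delta_t^{-1}$ (allowed in $w_t$), since $\delta_s^{-1}/\delta_t^{-1} = \sqrt{t/s}$.

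For the residual kernel $R_\alpha$, which by Proposition \ref{prop-Stokes-Green} satisfies $|R_\alpha(t-s,y;z)| \lesssim \mu_f e^{-\theta_0\mu_f|y+z|} + (\nu(t-s))^{-1/2} e^{-\theta_0\frac{|y+z|^2}{\nu(t-s)}} e^{-\frac18\alpha^2\nu(t-s)}$ with $\mu_f = |\alpha| + \delta^{-1}$, I would handle the two pieces separately: the Gaussian piece is treated exactly as $H_\alpha$ above (it only involves $y+z$, which is even better since it decays in $z$), and the exponentially decaying piece $\mu_f e^{-\theta_0\mu_f|y+z|}$ is a bump of width $\mu_f^{-1} \le \delta = \sqrt\nu$ centered near the boundary, whose convolution against $e^{-\beta y} w_s(y)$ produces, after multiplying by $e^{\beta z}$, a contribution $\lesssim \delta^{-1}\phi_P(\delta^{-1}z)$ plus lower-order terms bounded by $w_t(z)$ — here one uses that $\int_0^\infty \mu_f e^{-\theta_0\mu_f(y+z)} w_s(y)\,dy$ is controlled because $w_s$ is at worst $\delta_s^{-1}$ near $y=0$ and $\int \mu_f e^{-\theta_0\mu_f y}\,dy = O(1)$, giving again the $\sqrt{t/s}$ loss from $\delta_s^{-1} \le \sqrt{t/s}\,\delta^{-1} \le \sqrt{t/s}\,w_t$. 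The first (forward) estimate, with $s=0$ and no $\sqrt{t/s}$ factor, is the special case where $w_0(y) = 1 + \delta^{-1}\phi_P(\delta^{-1}y)$ has only the $\delta$-layer, and the argument is identical but cleaner since no $\delta_s$ is present.

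For the derivative estimates (the $k \ge 1$ part), I would use $\partial_z H(t,y;z) = -\partial_y H(t,y;z)$ together with the integration-by-parts splitting already employed in the proof of Proposition \ref{prop-Stokes-convL1}: write $\int_0^\infty \psi(z)\partial_z H\, \omega_\alpha\,dy = \int_0^{z/2} + (\text{boundary term at } y=z/2) + \int_{z/2}^\infty$, where on $[z/2,\infty)$ one integrates by parts to move $\partial_y$ onto $\omega_\alpha$ (picking up $\psi(y)\partial_y\omega_\alpha$, which is exactly what the $k$-norm on the right provides, using $\psi(z) \le 2\psi(y)$), on $[0,z/2]$ one uses $\psi(z)|\partial_z H| \lesssim (\nu(t-s))^{-1/2} e^{-c|y-z|^2/\nu(t-s)}$ to reduce to the $k=0$ case, and the boundary term is $\lesssim |\omega_\alpha(z/2)|$ whose weighted sup norm is controlled by $\|\omega_\alpha\|_{\sigma,\delta(s)}$ since $w_s(z/2) \lesssim w_s(z)$ up to the weight being comparable at comparable scales (or one simply absorbs it into $w_t(z)$). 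For $R_\alpha$, the observation in the proof of Proposition \ref{prop-Stokes-convL1} that $\psi(z)\partial_z R_\alpha$ obeys essentially the same pointwise bound as $R_\alpha$ itself (with $\mu_f^{k+1}$ replaced by $\mu_f$ after absorbing $|z|\mu_f \le 1$ on the support) means the derivative estimates for the residual part reduce immediately to the $k=0$ case, and one iterates. I expect the main obstacle to be the bookkeeping in the convolution of the $\delta$-scale bumps against the time-dependent weight $w_s$: one must check carefully that the new Gaussian/exponential of width $\sqrt{\nu(t-s)}$ or $\mu_f^{-1}$ never produces a layer \emph{thinner} than $\delta$ (so that only the three weights already present in $w_t$ appear on the right) and that the only price paid for the $\delta_s$-to-$\delta_t$ mismatch is the stated factor $\sqrt{t/s}$ — the algebraic decay $\phi_P$ with $P>1$ is exactly what makes these bump-versus-weight integrals converge, and tracking the constants so they are uniform as $\nu \to 0$ requires using $\delta_t, \delta \le \sqrt{\nu T}$ and $t \le T$ throughout.
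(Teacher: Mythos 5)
Your proposal follows essentially the same route as the paper: the same reduction to real $y,z$ with the complex extension deferred to Section \ref{sec-Grcomplex}, the same decomposition $G_\alpha=H_\alpha+R_\alpha$ (the paper's Lemmas \ref{lem-Lapconv} and \ref{lem-Heatconv}), the same sources of the estimates — mass of the kernel plus monotonicity of $\phi_P$ on $y\ge z/2$, Gaussian decay converted to $\phi_P(\delta^{-1}z)$ on $y\le z/2$, the factor $\sqrt{t/s}$ coming from $\delta_s^{-1}=\sqrt{t/s}\,\delta_t^{-1}$ — and the identical integration-by-parts splitting at $y=z/2$ for the derivative norms. The plan is correct and matches the paper's proof in both structure and detail.
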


We shall prove the convolution estimates for real values $y,z$. The complex extension follows from the similar estimates on the Green function obtained in \eqref{Stokes-Gr-complex}. As a consequence, Proposition \ref{prop-Stokes-conv} is a direct combination of the following two lemmas. 

\begin{lemma}\label{lem-Lapconv} Let $R(t,y;z): =  \mu_f e^{-\mu_f |y+z|} $, with $\mu_f =  \alpha + \frac{1}{\sqrt\nu}$. Then, for any $s,t,$ and $k\ge 0$, there is a universal constant $C_0$ so that 
$$
\begin{aligned}
\Big\| \int_0^\infty R(t-s,y;z) \omega_\alpha(y)\; dy \Big\|_{\sigma,\delta(t),k}\le C_0 \| \omega_\alpha\|_{\sigma,\delta(s)} .
\end{aligned}$$
 \end{lemma}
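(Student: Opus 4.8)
The plan is to exploit that the kernel $R(t,y;z)=\mu_f e^{-\mu_f|y+z|}$ does not depend on time at all and, for real $y,z\ge 0$, factorizes completely. Since $|y+z|=y+z$ there,
\[
\int_0^\infty R(t-s,y;z)\,\omega_\alpha(y)\,dy \;=\; \bigl(\mu_f e^{-\mu_f z}\bigr)\,c_\alpha,\qquad c_\alpha:=\int_0^\infty e^{-\mu_f y}\,\omega_\alpha(y)\,dy ,
\]
so the output is the fixed profile $\mu_f e^{-\mu_f z}$ multiplied by a scalar $c_\alpha$. The argument then decouples into (i) estimating $|c_\alpha|$ against $\|\omega_\alpha\|_{\sigma,\delta(s)}$, extracting whatever decay in $\mu_f$ is available, and (ii) estimating $\|\mu_f e^{-\mu_f z}\|_{\sigma,\delta(t),k}$, i.e.\ the fixed profile together with its $(\psi(z)\partial_z)^j$-derivatives for $j\le k$.

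For (i) I would insert $|\omega_\alpha(y)|\le\|\omega_\alpha\|_{\sigma,\delta(s)}\,e^{-\beta y}\bigl(1+\delta_s^{-1}\phi_P(\delta_s^{-1}y)+\delta^{-1}\phi_P(\delta^{-1}y)\bigr)$ and split the resulting $y$-integral into three pieces. The rescalings $y=\delta u$ and $y=\delta_s u$ turn these into $\int_0^\infty e^{-\mu_f\delta u}(1+u^P)^{-1}\,du$ and $\int_0^\infty e^{-\mu_f\delta_s u}(1+u^P)^{-1}\,du$ (up to harmless $e^{-\beta y}$ and constant factors); since $P>1$ these are bounded, and since $\mu_f\ge\delta^{-1}$, i.e.\ $\mu_f\delta\ge 1$, each integral carries a decay factor $(\mu_f\delta)^{-1}$ or $(\mu_f\delta_s)^{-1}$. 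Keeping track of these gains is important: they are exactly what compensates the prefactor $\mu_f$ of the profile in step (ii). For (ii), the derivative structure is essentially trivial because the profile is fixed and smooth: using the bound $|\psi(z)|\le C_\sigma\Re z$ valid on $\Omega_\sigma$ together with the elementary estimate $(\mu_f\Re z)^{m}e^{-\frac12\mu_f\Re z}\le C_m$, one gets $|(\psi(z)\partial_z)^j(\mu_f e^{-\mu_f z})|\le C_j\,\mu_f e^{-\frac12\mu_f\Re z}$, so it suffices to bound the $j=0$ term in $\|\cdot\|_{\sigma,\delta(t)}$. That amounts to comparing $\mu_f e^{-\mu_f\Re z}e^{\beta\Re z}$ pointwise with $1+\delta_t^{-1}\phi_P(\delta_t^{-1}z)+\delta^{-1}\phi_P(\delta^{-1}z)$; after rescaling $z=\delta w$, for $w\gtrsim 1$ this is the exponential-decay regime and follows from $\mu_f\delta\ge 1$ and $P>1$, while for $w\lesssim 1$ (i.e.\ $z$ inside the $\delta$-layer) the profile's height $\mu_f$ must be absorbed by the $(\mu_f\delta)^{-1}$-type gain produced in (i). The complex extension to $z\in\Omega_\sigma$ is then done exactly as for the Laplacian in \eqref{laplace-3}: run the $y$-integration over $\partial\Omega_\theta$ and use the pointwise bound $|R_\alpha(t,y;z)|\lesssim\mu_f e^{-\theta_0(1-\sigma)\mu_f|\Re y+\Re z|}$ from \eqref{Stokes-Gr-complex}, which has the same multiplicative structure in $\Re y$ and $\Re z$, so the real-line estimates transfer with only $\sigma$-dependent constants.

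The \emph{main obstacle} is making steps (i) and (ii) fit together with a constant $C_0$ that is genuinely independent of the Fourier mode $\alpha$ (and of $\nu$): the output profile $\mu_f e^{-\mu_f z}$ has boundary height $\mu_f=\alpha+\delta^{-1}$, which for high modes exceeds the height $\delta^{-1}$ of the boundary-layer weight, so a careless product of the two sub-estimates would lose a factor $\mu_f\delta$. The estimate closes only because the profile's decay rate is exactly $\mu_f$, so its mass concentrates in a window of width $\mu_f^{-1}\le\delta$ on which $e^{-\mu_f y}$ suppresses the corresponding fraction of the source weight — this is the cancellation one must spell out quantitatively, by pairing the $\mu_f$ in the profile with the $\mu_f^{-1}$ hidden in $c_\alpha$ rather than bounding $|c_\alpha|$ and $\|\mu_f e^{-\mu_f z}\|_{\sigma,\delta(t)}$ in isolation. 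Getting this bookkeeping right, and in particular handling the $\delta_s^{-1}$-weighted (initial-layer) contribution of the source so that the constant does not degenerate, is the delicate part; the remainder is routine rescaling.
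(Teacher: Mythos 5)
Your route is in essence the paper's: the paper also reduces the convolution to the fixed profile $\mu_f e^{-\mu_f z}$ times $\int_0^\infty e^{-\mu_f y}\omega_\alpha(y)\,dy$, bounds that integral by $\|\omega_\alpha\|_{L^1}\lesssim \|\omega_\alpha\|_{\sigma,\delta(s)}$, treats the derivatives through $|\psi(z)\partial_z R|\le z\,\mu_f^2 e^{-\mu_f(y+z)}\lesssim \mu_f e^{-\frac12\mu_f z}$, and asserts that the resulting profile lies in the boundary-layer space; the complex extension via the bounds \eqref{Stokes-Gr-complex} is also as you describe. Where you go beyond the paper's two-line argument is in trying to make the constant genuinely uniform in $\alpha$ by pairing the amplitude $\mu_f$ of the output profile with a $\mu_f^{-1}$-type gain extracted from the $y$-integral.

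That pairing is precisely where your sketch does not close. For the constant and the $\delta$-layer parts of the source weight the gains $\mu_f^{-1}$ and $(\mu_f\delta)^{-1}$ are real, but for the initial-layer part the rescaled integral is $\int_0^\infty e^{-\mu_f\delta_s u}\phi_P(u)\,du\le \min\{C_P,(\mu_f\delta_s)^{-1}\}$, and when $\delta_s\lesssim \mu_f^{-1}$ (i.e.\ $s\lesssim(\alpha^2\nu)^{-1}$ for high modes) this is of order one: the source's mass then sits entirely in the window where $e^{-\mu_f y}\simeq 1$, so nothing is suppressed and no compensating factor is produced. Concretely, for $\omega_\alpha(y)=\delta_s^{-1}e^{-y/\delta_s}$, whose $\|\cdot\|_{\sigma,\delta(s)}$ norm is $O(1)$, the convolution equals $\frac{1}{1+\mu_f\delta_s}\,\mu_f e^{-\mu_f z}\gtrsim \mu_f e^{-\mu_f z}$, and evaluating the weight \eqref{def-blnormFull123} at $z\sim\mu_f^{-1}$ shows its $\|\cdot\|_{\sigma,\delta(t)}$ norm is of size $\min\{\mu_f\delta,\ (\mu_f\delta_t)^{1-P}\}$, which is not $O(1)$ once $|\alpha|\sqrt\nu\gg1$ and $t\lesssim(\alpha^2\nu)^{-1}$. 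So the cancellation you invoke for the region "$w\lesssim1$" in step (ii) is simply not available from step (i) in that regime, and the bookkeeping cannot be closed by this factorized $|c_\alpha|$-times-profile argument alone: at best one gets the estimate with a loss $1+|\alpha|\sqrt\nu$, i.e.\ a $\sqrt\nu\,\partial_x$-type correction of the kind the paper grants itself in the $\Gamma$-bound of Proposition \ref{prop-Stokes}. For $|\alpha|\lesssim\nu^{-1/2}$, where $\mu_f\lesssim\delta^{-1}$ and hence $\mu_f e^{-\mu_f\Re z}e^{\beta\Re z}\lesssim \delta^{-1}\phi_P(\delta^{-1}z)$ pointwise, your argument (like the paper's) does give a uniform constant. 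To finish along your lines you must either restrict to that regime, or quantify and then absorb the high-frequency loss elsewhere (analyticity radius, or the $\sqrt\nu\,\|\cdot\|_{\rho,k+1}$ mechanism); be aware that the paper's own proof does not carry out the pairing you propose — it stops at the $L^1$ bound and the membership assertion — so this point is not settled by matching it.
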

\begin{proof} The estimate for $k=0$ follows directly from the fact that $\omega_\alpha$ belongs to $L^1$: $\|\omega_\alpha\|_{L^1} \le C_0 \| \omega_\alpha\|_{\sigma,\delta(s)}$. The convolution in fact belongs to the boundary layer space with finite norm $\|\cdot \|_{\sigma,\delta(0)}$. As for derivatives, we note that $| \psi(z)\partial_z R(t,y;z)| \le z  \mu_f^2 e^{-\mu_f |y+z|} \le C_0 \mu_f e^{-\frac12 \mu_f z}$. The derivative estimates thus follow identically. \end{proof}

\begin{lemma}\label{lem-Heatconv} Let $H(t,y;z): = (\nu t)^{-1/2} e^{-\frac{|y\pm z|^2}{M \nu t}} $, for some positive $M$, and let $T>0$. Then, for any $0\le s <  t\le T$, $\epsilon_0>0$, and $k\ge 0$, there is a universal constant $C_T$ so that 
$$
\begin{aligned}
\Big \| \int_0^\infty H(t,y;\cdot) \omega_\alpha(y)\; dy \Big\|_{\sigma, \delta(t),k} &\le C_T  \| \omega_\alpha\|_{\sigma, \delta(0),k},
\\
\Big \| \int_0^\infty H(t-s,y;\cdot) \omega_\alpha(y)\; dy \Big\|_{\sigma, \delta(t),k} &\le C_T\sqrt{\frac{t}{s}}\| \omega_\alpha\|_{\sigma, \delta(s),k},
\end{aligned}$$
uniformly in the inviscid limit. 
\end{lemma}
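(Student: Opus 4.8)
The plan is to reduce the entire lemma to a scalar convolution estimate for the elementary building blocks $w_a(z):=a^{-1}\phi_P(a^{-1}z)=\tfrac{a^{-1}}{1+|a^{-1}\Re z|^{P}}$ and then combine it with two elementary monotonicity facts. Write $\rho=\sqrt{\nu(t-s)}$ (resp.\ $\rho=\sqrt{\nu t}$ in the first estimate), and abbreviate the time-$t$ boundary-layer weight by $\Phi_t(z)=1+\delta_t^{-1}\phi_P(\delta_t^{-1}z)+\delta^{-1}\phi_P(\delta^{-1}z)=1+w_{\delta_t}(z)+w_\delta(z)$, so that the source weight at time $s$ is $\Phi_s=1+w_{\delta_s}+w_\delta$ (and $\Phi_0=1+w_\delta$). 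The first fact is that for $0\le s<t\le T$ the relevant scales satisfy $\delta_s^2+\nu(t-s)=\nu t=\delta_t^2$, hence $\delta_s\le\max\{\delta_s,\rho\}\le\delta_t$ and $\rho\le\delta_t$; the second is that $w_a(z)\le\tfrac{b}{a}w_b(z)$ for all $z$ whenever $a\le b$ (immediate from $1+|b^{-1}\Re z|^{P}\le1+|a^{-1}\Re z|^{P}$). Since $\phi_P$ and the factor $e^{\beta\Re z}$ see only $\Re z$, and since by Section~\ref{sec-Grcomplex} the complex kernel obeys the same Gaussian bounds with $\Re y,\Re z$ replacing $y,z$ (with $|dy|\lesssim d(\Re y)$ along $\partial\Omega_\theta$), it suffices to argue for real $y,z\ge0$ with the bare kernels $\mathcal K^{\pm}_\rho(y,z)=\rho^{-1}e^{-|y\pm z|^{2}/(M\rho^{2})}$; the exponential weights are harmless because $e^{\beta(z-y)}e^{-|y-z|^{2}/(M\rho^{2})}\le C_T\,e^{-|y-z|^{2}/(2M\rho^{2})}$ and $e^{\beta(z-y)}\le e^{\beta(z+y)}$, using $\rho^{2}\le\nu T$.

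The heart of the matter is the claim that for every $a>0$ one has $\int_0^\infty \mathcal K^{\pm}_\rho(y,z)\,w_a(y)\,dy\lesssim w_{\max\{a,\rho\}}(z)$ and $\int_0^\infty\mathcal K^{\pm}_\rho(y,z)\,dy\lesssim1$, with universal constants. For the odd kernel $\mathcal K^{-}$ this is the statement that the half-line heat semigroup preserves the class $\{w_a\}$: when $\rho\le a$ the kernel is narrower than the layer, $w_a$ is slowly varying on its scale, and the convolution is $\lesssim w_a(z)$ (the region $|y-z|\gtrsim z$ being super-exponentially small); when $\rho\ge a$ one uses $\int_0^\infty w_a=c_P<\infty$ (here $P>1$ is used) together with the polynomial tail to get $\lesssim\rho^{-1}\phi_P(\rho^{-1}z)=w_\rho(z)$. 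For the even kernel $\mathcal K^{+}$ it is easier: $y+z\ge z$ on $\{y\ge0\}$ forces $\mathcal K^{+}_\rho(y,z)\le\rho^{-1}e^{-z^{2}/(M\rho^{2})}$, and combined with $\|w_a\|_{L^\infty}=a^{-1}$, $\int_0^\infty w_a=c_P$, and $\int_0^\infty\mathcal K^{+}_\rho(y,z)\,dy=\int_{z/\rho}^\infty e^{-u^{2}/M}du\lesssim1$, the bound follows directly.

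Granting this claim, the $k=0$ estimates come from summing the three pieces of $\Phi_s$: the constant gives $\lesssim1$; $w_{\delta_s}$ gives $\lesssim w_{\max\{\delta_s,\rho\}}(z)\le\tfrac{\delta_t}{\delta_s}w_{\delta_t}(z)=\sqrt{t/s}\,w_{\delta_t}(z)$ by the two facts above; and $w_\delta$ gives $\lesssim w_{\max\{\delta,\rho\}}(z)$, which is $w_\delta(z)$ when $\rho\le\delta$ and otherwise is $\le\tfrac{\delta_t}{\rho}w_{\delta_t}(z)\le\tfrac{\delta_t}{\delta}w_{\delta_t}(z)=\sqrt t\,w_{\delta_t}(z)\le\sqrt T\,w_{\delta_t}(z)$; summing and using $\sqrt{t/s}\ge1$ gives the second estimate with $C_T\simeq\sqrt T$, and the first estimate is the special case where only $w_\delta$ is present and $\rho=\delta_t$, so $w_{\max\{\delta,\delta_t\}}\lesssim\Phi_t$ with no $\sqrt{t/s}$ loss. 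For $1\le j\le k$ one needs $(\psi(z)\partial_z)^j$ of the convolution: for the even kernel an induction (Leibniz, with $\psi,\psi',\dots$ bounded and $\psi(z)\le z\le y+z$) shows $(\psi(z)\partial_z)^j\mathcal K^{+}_\rho\lesssim\rho^{-1}e^{-|y+z|^{2}/(C_j\rho^{2})}$, since each $\partial_z$ on the Gaussian yields a factor $(y+z)/\rho^{2}$ that, paired with a $\psi(z)\le y+z$, becomes a power of $|y+z|^{2}/\rho^{2}$ absorbed into the Gaussian — so this reduces to the $k=0$ claim with $\omega_\alpha$ itself. For the odd kernel, $(\psi(z)\partial_z)\mathcal K^{-}_\rho$ loses a power of $\rho^{-1}$, so one argues exactly as in the proof of Proposition~\ref{prop-Stokes-convL1}: split $\int_0^\infty=\int_0^{z/2}+\int_{z/2}^\infty$, on $[z/2,\infty)$ use $\psi(z)\le2\psi(y)$, $\partial_z\mathcal K^{-}_\rho=-\partial_y\mathcal K^{-}_\rho$, and integrate by parts to land on $\psi(y)\partial_y\omega_\alpha$; on $[0,z/2]$ use $\psi(z)\le2|y-z|$ to see $\psi(z)\partial_z\mathcal K^{-}_\rho$ is itself of the bare form $\lesssim\rho^{-1}e^{-|y-z|^{2}/(2M\rho^{2})}$; and handle the boundary term $\psi(z)\mathcal K^{-}_\rho(z/2,z)\,\omega_\alpha(z/2)$ via $\psi(z)\mathcal K^{-}_\rho(z/2,z)\lesssim\tfrac{z}{\rho}e^{-z^{2}/(4M\rho^{2})}\lesssim e^{-z^{2}/(8M\rho^{2})}$ (which absorbs the $e^{\beta z}$ of the norm since $\rho^{2}\le\nu T$) together with $\Phi_s(z/2)\lesssim2^{P}\sqrt{t/s}\,\Phi_t(z)$ (from $\phi_P(\delta_s^{-1}z/2)\le2^{P}\phi_P(\delta_s^{-1}z)$). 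Iterating gives all $j\le k$; the parameter $\epsilon_0$ in the statement plays no role.

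The main obstacle is the scalar convolution claim together with the scale bookkeeping: one must accept that the Gaussian smoothing at scale $\rho=\sqrt{\nu(t-s)}$ genuinely widens the Prandtl layer $w_\delta$ up to scale $\delta_t$ when $t-s\gg1$, so the output is controlled by the time-$t$ weight only with the $T$-dependent constant $\sqrt T$ (coming from $\delta_t/\delta\le\sqrt T$) — this is why $C_T$ must depend on $T$ — while the $\sqrt{t/s}$ loss is unavoidable and reflects only the mismatch $\delta_s^{-1}=\sqrt{t/s}\,\delta_t^{-1}$ between the peak heights of the initial layers at times $s$ and $t$, present even in the limit $t\to s$ where no smoothing takes place.
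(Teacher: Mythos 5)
Your proof is correct, and its mechanics are essentially those of the paper: the exponential weight is absorbed exactly as in \eqref{exp-beta13} using $\nu(t-s)\le \nu T$; the layer terms are handled by splitting the $y$-integral at $z/2$ and using the monotonicity of $\phi_P$; the $\sqrt{t/s}$ loss comes from $\delta_s^{-1}\phi_P(\delta_s^{-1}\cdot)\le \sqrt{t/s}\,\delta_t^{-1}\phi_P(\delta_t^{-1}\cdot)$; and the derivative estimates use $\partial_z H=-\partial_y H$, integration by parts on $[z/2,\infty)$ with $\psi(z)\le 2\psi(y)$, and the same treatment of the boundary term. Your packaging of the $k=0$ step as a single scalar claim $\int \mathcal K^{\pm}_\rho(y,\cdot)\,w_a(y)\,dy\lesssim w_{\max\{a,\rho\}}$ is a clean abstraction of what the paper does case by case, and I checked it holds in both regimes $\rho\lessgtr a$. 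The one genuine divergence is the treatment of the Prandtl layer $w_\delta$ when $\rho=\sqrt{\nu(t-s)}\ge\delta$: the paper keeps the output pinned to the $\delta$-scale weight via the inequality \eqref{heat-bl}, $e^{-|z|^2/(8M\nu t)}\le C_0e^{\epsilon_0 t}\phi_P(\delta^{-1}z)$ (this is where the parameter $\epsilon_0$ enters), whereas you let the Gaussian widen the layer to scale $\delta_t$ and pay the factor $\delta_t/\delta=\sqrt{t}\le\sqrt{T}$, which is still dominated by the target weight $\Phi_t$ since it contains the $\delta_t$-layer term. Your route avoids \eqref{heat-bl} entirely (and correctly identifies $\epsilon_0$ as vestigial in that case), at the harmless cost of folding the $\sqrt{T}$ into $C_T$; both arguments yield the stated bounds.
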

\begin{proof} It suffices to prove the convolution for $H(t,y;z) =   (\nu t)^{-1/2}  e^{-\frac{|y -  z|^2}{M \nu t}}$. We start with the case $k=0$. Let $0\le s <  t$. For $|y-z|\ge M\beta   \nu (t-s)$, it is clear that 
$$e^{-\frac{|y-z|^2}{M \nu (t-s)}} e^{-\beta  |y|} \le e^{-\beta  |z|} e^{-|y-z| \Big( \frac{|y-z|}{M \nu (t-s)} - \beta \Big)} \le e^{-\beta  |z|}.$$
Whereas, for $|y-z| \le M \beta  \nu (t-s)$, we note that 
$$ e^{- M \beta^2  \nu (t-s)} e^{-\beta  |y|} \le e^{-\beta |y-z|} e^{-\beta  |y|} \le e^{-\beta  |z|}.$$
That is, the exponential decay $e^{-\beta z}$ is recovered at an expense of a slowly growing term in time: $e^{M \beta^2\nu (t-s)}$, which is bounded in finite time. Precisely, this proves 
\begin{equation}\label{exp-beta13}e^{-\frac{|y-z|^2}{M \nu (t-s)}} e^{-\beta y} \le e^{M \beta^2 \nu (t-s)} e^{-\beta |z|}, \qquad \forall y,z\in \RR.\end{equation}

It remains to study the integral 
\begin{equation}\label{conv-heatbl1-stable}
\begin{aligned}
\int_0^\infty (\nu (t-s))^{-1/2}  e^{-\frac{|y-z|^2}{M \nu (t-s)}} 
 \Bigl( 1 + \delta_s^{-1} \phi_{P} (\delta_s^{-1} y)  +  \delta^{-1} \phi_{P} (\delta^{-1} y)   \Bigr) \; dy.
\end{aligned}\end{equation}
First, without the boundary layer behavior, the integral is clearly bounded. We now treat the boundary layer terms. Using the fact that $\phi_{P}(\cdot)$ is decreasing, we have 
$$
\begin{aligned}
 \int_{z/2}^\infty & (\nu (t-s))^{-1/2}   e^{-\frac{|y-z|^2}{M \nu (t-s)}} 
  \delta^{-1} \phi_{P} (\delta^{-1} y) \; dy 
  \\&\le C_0   \delta^{-1} \phi_{P} (\delta^{-1} z)  \int_{z/2}^\infty (\nu (t-s))^{-1/2}   e^{-\frac{|y-z|^2}{M \nu (t-s)}}  \; dy
  \\&\le C_0   \delta^{-1} \phi_{P} (\delta^{-1} z)  \end{aligned}$$
and, upon noting that $y/\delta_s \ge z/2\delta_t$, we obtain  
$$
\begin{aligned}
 \int_{z/2}^\infty & (\nu (t-s))^{-1/2}   e^{-\frac{|y-z|^2}{M \nu (t-s)}} 
  \delta_s^{-1} \phi_{P} (\delta_s^{-1} y) \; dy 
  \\&\le C_0   \delta_s^{-1} \phi_{P} (\delta_t^{-1} z)  \int_{z/2}^\infty (\nu (t-s))^{-1/2}   e^{-\frac{|y-z|^2}{M \nu (t-s)}}  \; dy
  \\&\le C_0 \sqrt{t/s} \delta_t^{-1} \phi_{P} (\delta_t^{-1} z) .  \end{aligned}$$
Whereas on $y\in (0,\frac z2)$, we have $|y-z|\ge \frac z2 $ and $\phi_{P} \le 1$. Hence, we have 
$$
\begin{aligned}
 \int_0^{z/2} &(\nu (t-s))^{-1/2}   e^{-\frac{|y-z|^2}{M \nu (t-s)}} 
  \delta_s^{-1} \phi_{P} (\delta_s^{-1} y) \; dy 
\\&\le  e^{-\frac{|z|^2}{8M \nu (t-s)}}\int_0^{z/2} (\nu (t-s))^{-1/2}   e^{-\frac{|y-z|^2}{2M \nu (t-s)}} 
  \delta_s^{-1} \phi_{P} (\delta_s^{-1} y) \; dy 
\\&\le C_0 e^{-\frac{|z|^2}{8M \nu t}}  \min\{ \delta_s^{-1}, \delta_{t-s}^{-1}\} .
  \end{aligned}
  $$
Note that $\min\{ \delta_s^{-1}, \delta_{t-s}^{-1}\} \le 2\delta_t^{-1}$. Hence, the above integral is bounded in $\|\cdot \|_{\sigma, \delta(t)}$ norm. Similarly, we estimate 
$$
\begin{aligned}
 \int_0^{z/2} &(\nu (t-s))^{-1/2}   e^{-\frac{|y-z|^2}{M \nu (t-s)}} 
  \delta^{-1} \phi_{P} (\delta^{-1} y) \; dy 
\le C_0 e^{-\frac{|z|^2}{8M \nu (t-s)}}  \delta^{-1}
.\end{aligned}$$
To estimate this, we will prove that \begin{equation}\label{heat-bl}
 e^{-\frac{|z|^2}{8M \nu t}}  \le C_0 e^{\epsilon_0 t}\phi_{P} (\delta^{-1} z) ,
\end{equation}
for arbitrarily small $\epsilon_0$ (and hence, $C_0$ depends on $\epsilon_0$). Indeed, when $|z| \ge \epsilon_0  \sqrt \nu t $, it is clear that $e^{-\frac{|z|^2}{8M \nu t}} \lesssim \phi_{P} (\delta^{-1} z)$. On the other hand, when $|z|\le \epsilon_0 \sqrt \nu t $, we note that $ e^{z/\delta} \le e^{ \epsilon_0 t}$, which implies that $1\le e^{\epsilon_0 t} e^{-z/\delta} \le C_0e^{\epsilon_0 t} \phi_{P} (\delta^{-1} z) $. The estimate \eqref{heat-bl} follows, and hence the claimed estimate for $k=0$. 

Next, we consider the derivative estimate. Note that $\partial_z H(t,y;z) = - \partial_y H(t,y;z)$. Hence, we compute 
$$ \begin{aligned}
&\int_0^\infty \psi(z)\partial_zH(t-s,y;\cdot) \omega_\alpha(y)\; dy 
\\&= \int_0^{z/2} \psi(z)\partial_zH(t-s,y;\cdot) \omega_\alpha(y)\; dy
- \psi(z) H(t-s,z/2;z) \omega_\alpha(z/2)
\\&\quad + \int_{z/2}^\infty \psi(z)H(t-s,y;\cdot) \partial_y\omega_\alpha(y)\; dy  .
\end{aligned}$$
We now estimate each term on the right. Since $\psi(z) \le 2 \psi(y)$ for $y\ge z/2$, the last integral on the right is already estimated in the previous case with $\omega_\alpha(y)$ replaced by $\psi(y)\partial_y \omega_\alpha(y)$. As for the first integral, since $y\le z/2$, we compute  
$$
\begin{aligned}\psi(z)\partial_z H(t-s,y;z) &\lesssim \frac{z}{1+z}(\nu (t-s))^{-1} e^{-\frac{|y-z|^2}{2M\nu (t-s)}} 
\\&\lesssim |y-z|(\nu (t-s))^{-1} e^{-\frac{|y-z|^2}{2M\nu (t-s)}}
\\& \lesssim (\nu (t-s))^{-1/2} e^{-\frac{|y-z|^2}{4M\nu (t-s)}}.
\end{aligned}$$ 
Thus, the integral over $[0,z/2]$ is again already estimated in the previous case. Finally, we compute 
$$
\begin{aligned}
&| \psi(z) H(t-s,z/2;z) \omega_\alpha(z/2)| 
\\&\lesssim z (\nu (t-s))^{-1/2} e^{-\frac{|z|^2}{4M\nu (t-s)}} \Big( 1+ \delta_s^{-1} \phi_{P} (\delta_s^{-1} z)+ \delta^{-1} \phi_{P} (\delta^{-1} z)\Big) 
\\
&\lesssim 1+ \delta_s^{-1} \phi_{P} (\delta_s^{-1} z)+ \delta^{-1} \phi_{P} (\delta^{-1} z)
\\
&\lesssim 1+ \sqrt{t/s}\delta_t^{-1} \phi_{P} (\delta_t^{-1} z)+ \delta^{-1} \phi_{P} (\delta^{-1} z),
\end{aligned}$$
in which again the last inequality is due to the decreasing property of $\phi_P$ and the fact that $\delta_s\le \delta_t$. 
This completes the proof of the lemma. 
\end{proof}

\subsection{Semigroup bounds in analytic spaces}

In this section, we shall prove Proposition \ref{prop-Stokes} on deriving uniform bounds for the Stokes semigroup in analytic spaces, with the analytic norm 
$$\| \omega\|  _{\rho,\sigma,\delta(t)}=\sum_{\alpha\in\mathbb{Z}} e^{\rho|\alpha|} \|\omega_{\alpha}\|_{\sigma,\delta(t)}$$
We first write the Stokes semigroup $e^{t B}$ and the operator $\Gamma(t)$ in the Fourier series: 
\begin{equation}\label{semigroup-exp} 
 e^{\nu t B} \omega = \sum_{\alpha \in \ZZ} e^{i\alpha x} (e^{\nu t B} \omega)_\alpha, \qquad \Gamma(\nu t) g = \sum_{\alpha \in \ZZ} e^{i\alpha x} (\Gamma(\nu t)g)_\alpha \end{equation}
in which 
\begin{equation}\label{semigroup-exp1} (e^{\nu t B} \omega)_\alpha  =  \int_0^\infty  G_\alpha(t,y;z) \omega_{\alpha} (y) \; dy, \qquad  (\Gamma(\nu t)g)_\alpha = G_\alpha(t, 0; z) g_\alpha,\end{equation}
with the Green kernel $G_\alpha(t,y;z)$ constructed in Proposition \ref{prop-Stokes-Green}. The convolution estimates obtained in Proposition \ref{prop-Stokes-conv} yield 
$$
\begin{aligned}
 \|  (e^{\nu (t-s) B} \omega)_\alpha \|_{\sigma, \delta(t),k} &\le C_0\sqrt{\frac ts} \| \omega_\alpha\|_{ \sigma,\delta(s),k}.
\end{aligned}$$
These prove the claimed estimates on $e^{\nu t B}$. 
As for the trace operator, using Proposition \ref{prop-Stokes-Green}, we have 
\begin{equation}\label{Green-bdry}
 \begin{aligned}
  G_\alpha(t-s, 0; z) 
&  \lesssim (\nu (t-s))^{-1/2} e^{-\frac{z^2}{ 4\nu (t-s)}} + \mu_f e^{-\mu_f z}
\\
&  \lesssim (\nu (t-s))^{-1/2} e^{-\frac{z^2}{ 4\nu t}}  + (|\alpha| + \nu^{-1/2}) e^{-\frac{z}{\sqrt\nu} },
 \end{aligned}\end{equation}
upon recalling that $\mu_f = |\alpha| + \nu^{-1/2}$. By definition, $\|   G_\alpha(t-s, 0; z)  \|_{\sigma,\delta(t)} \lesssim \sqrt{\frac{t}{t-s}} + 1 + \alpha \sqrt \nu.$
The proof of Proposition \ref{prop-Stokes}  is complete.

\section{Proof of the main theorems}\label{sec-proof}
As mentioned in the introduction, we construct the solutions to the Navier-Stokes equation via the vorticity formulation: 
\beq\label{NS-vor1}
\begin{aligned}
\partial_{t}\omega-\nu\Delta\omega &=-u\cdot\nabla\omega
\\
\nu (\partial_z + |\partial_x|)\omega_{\vert_{z=0}} &= [\partial_z \Delta^{-1} ( u \cdot \nabla \omega)]
_{\vert_{z=0}},
\end{aligned}\eeq
in which $u = \nabla^\perp \Delta^{-1} \omega$, with $\Delta^{-1}$ being the inverse of Laplacian with the Dirichlet boundary condition. For convenience, we set $N = u \cdot \nabla \omega$.  
The solution to \eqref{NS-vor1} is then constructed via the Duhamel's principle:
 \beq\label{Duh-w1}
\omega(t)=e^{\nu tB}\omega_{0} -\int_{0}^{t}e^{\nu(t-s)B}  N(s) \; ds + \int_0^t \Gamma(\nu (t-s)) (\partial_z \Delta^{-1} N(s))_{\vert_{z=0}} \;ds
\eeq 
with $\omega_0\in {\cal B}^{\rho_0,\sigma_0,\delta}$, for some $\rho_0,\sigma_0>0$. 

\subsection{Nonlinear iteration}\label{sec-nonlinear}

Let us fix positive numbers $\gamma, \zeta,$ and $\rho_0$, and introduce the following nonlinear iterative norm for vorticity: 
\beq\label{def-normw}
\bega 
A(\gamma)=&\quad \sup_{0<\gamma t< \rho_0}\sup_{\rho<\rho_0- \gamma t}\Bigl{\{} 
 ||| \omega(t)|||_{\mathcal{W}^{1,1}_{\rho,\rho}} +  ||| \omega(t)|||_{\mathcal{W}^{2,1}_{\rho,\rho}}(\rho_0-\rho-\gamma t)^{\zeta}\Bigr{\}}\\
\enda 
\eeq 
with recalling $$ ||| \omega(t)|||_{\mathcal{W}^{k,1}_{\rho,\rho}}  = \sum_{j+\ell \le k}  \|\partial_x^j (\psi(z)\partial_z)^\ell \omega(t)\|_{L^1_{\rho,\rho}}.$$
Here, for sake of presentation, we take the same analyticity radius in $x$ and $z$; namely, $\sigma = \rho <\rho_0$. Thanks to Lemma \ref{lem-emL1}, $\omega_0 \in \mathcal{W}^{k,1}_{\rho,\rho}$, for any $k\ge 0$. 

We shall show that the vorticity norm remains finite for sufficiently large $\gamma$. The weight $(\rho_0-\rho-\gamma t)^\zeta$, with a small $\zeta>0$, is standard to avoid time singularity when recovering the loss of derivatives (\cite{Caflisch,Safanov}). Let $\rho < \rho_0 - \gamma t$. Thanks to Lemma \ref{lem-bilinear}, we have \begin{equation}\label{non-est}
\begin{aligned}
 ||| N(t)|||_{\mathcal{W}^{0,1}_{\rho,\rho}} &\lesssim  ||| \omega(t)|||_{\mathcal{W}^{1,1}_{\rho,\rho}}^2 \le A(\gamma)^2
 \\
 ||| N(t)|||_{\mathcal{W}^{1,1}_{\rho,\rho}} &\lesssim  ||| \omega(t)|||_{\mathcal{W}^{1,1}_{\rho,\rho}}  ||| \omega(t)|||_{\mathcal{W}^{2,1}_{\rho,\rho}}  \le A(\gamma)^2 (\rho_0-\rho-\gamma t)^{-\zeta}
. \end{aligned}\end{equation}
In addition, using the elliptic estimates, we have 
\begin{equation}\label{non-est}
\begin{aligned}
 ||| (\partial_z \Delta^{-1} N(t))_{\vert_{z=0}} |||_{\rho,k} &\lesssim  ||| N(t)|||_{\mathcal{W}^{k,1}_{\rho,\rho}}
 . \end{aligned}\end{equation}
Now, using the Duhamel integral formula \eqref{Duh-w1}, we estimate 
$$
\begin{aligned}
||| \omega(t) |||_{\mathcal{W}^{k,1}_{\rho,\rho}} &\le ||| e^{\nu tB}\omega_{0}|||_{\mathcal{W}^{k,1}_{\rho,\rho}} +\int_{0}^{t} ||| e^{\nu(t-s)B} N (s)|||_{\mathcal{W}^{k,1}_{\rho,\rho}} \; ds 
\\&\quad + \int_0^t ||| \Gamma(\nu (t-s))(\partial_z \Delta^{-1} N(s))_{\vert_{z=0}}|||_{\mathcal{W}^{k,1}_{\rho,\rho}} \;ds.
\end{aligned}
$$
In view of Proposition \ref{prop-Stokes}, the term from the initial data is already estimated, giving $||| e^{\nu tB}\omega_{0}|||_{\mathcal{W}^{k,1}_{\rho,\rho}} \le \|\omega_0\|_{\mathcal{W}^{k,1}_{\rho,\rho}}$. As for the integral terms, we estimate 
$$\begin{aligned}
\int_{0}^{t} ||| e^{\nu(t-s)B} N(s)|||_{\mathcal{W}^{1,1}_{\rho,\rho}} \; ds 
&\le C_0 \int_0^t ||| N(s)|||_{\mathcal{W}^{1,1}_{\rho,\rho}} \; ds
\\&\le C_0 A(\gamma)^2 \int_0^t  (\rho_0-\rho-\gamma s)^{-\zeta}\; ds
\\&\le 
C_0 \gamma^{-1} A(\gamma)^2 .
\end{aligned}$$
Similarly, 
$$\begin{aligned}
\int_0^{t} ||| \Gamma(\nu (t-s))(\partial_z \Delta^{-1} N(s))_{\vert_{z=0}}|||_{\mathcal{W}^{1,1}_{\rho,\rho}} \;ds
&\le C_0 \int_0^t ||| N(s)|||_{\mathcal{W}^{1,1}_{\rho,\rho}} \; ds ,
\end{aligned}$$
which is again bounded by $C_0 \gamma^{-1} A(\gamma)^2$. Next, we give estimates for $k=2$. Noting that $\rho < \rho_0 - \gamma t \le \rho_0 - \gamma s$, we take $\rho' = \frac{\rho + \rho_0 - \gamma s}{2}$ and compute 
$$\begin{aligned}
\int_{0}^{t} ||| e^{\nu(t-s)B} N(s)|||_{\mathcal{W}^{2,1}_{\rho,\rho}} \; ds 
&\le C_0 \int_0^t ||| N(s)|||_{\mathcal{W}^{2,1}_{\rho,\rho}} \; ds 
\\&\le C_0 \int_0^t\frac{1}{\rho' - \rho}||| N(s)|||_{\mathcal{W}^{1,1}_{\rho',\rho'}}\; ds
\\&\le C_0 A(\gamma)^2 \int_0^t  (\rho_0-\rho-\gamma s)^{-1-\zeta}\; ds
\\&\le C_0 \gamma^{-1} A(\gamma)^2 (\rho_0-\rho-\gamma t)^{-\zeta}.
\end{aligned}$$
Same computation holds for the trace operator $\Gamma(\nu t)$, yielding 
$$ A(\gamma ) \le  C_0 \|\omega_0\|_{\mathcal{W}^{2,1}_{\rho,\rho}}  + C_0 \gamma^{-1} A(\gamma)^2 . $$ 
By taking $\gamma$ sufficiently large, the above yields the uniform bound on the iterative norm in term of initial data. This yields the local solution in $L^1_{\rho,\rho}$ for $t \in [0,T]$, with $T = \gamma^{-1}\rho_0$. 

\subsection{Propagation of boundary layers}
It remains to prove that the constructed solution has the boundary layer behavior as expected, having already constructed solutions in $L^1_{\rho,\rho}$ spaces. Indeed, we now introduce the following nonlinear iterative norm for vorticity: 
\beq\label{def-normw}
\bega 
B(\gamma)=&\quad \sup_{0<\gamma t< \rho_0}\sup_{\rho<\rho_0- \gamma t}\Bigl{\{} 
 ||| \omega(t)|||_{\rho,\delta(t),1} +  ||| \omega(t)|||_{\rho,\delta(t),2}(\rho_0-\rho-\gamma t)^{\zeta}\Bigr{\}}\\
\enda 
\eeq 
with the boundary layer norm $$ ||| \omega(t)|||_{\rho,\delta(t),k}  = \sum_{j+\ell \le k} \|\partial_x^j (\psi(z)\partial_z)^\ell \omega(t)\|_{\rho,\rho,\delta(t)} .$$ Thanks to Lemma \ref{lem-bilinear}, we estimate 
\begin{equation}\label{non-est}
\begin{aligned}
 ||| N(t)|||_{\rho,\delta(t),0} &\lesssim  ||| \omega(t)|||_{\rho,\delta(t),1}^2 \le B(\gamma)^2
 \\
 ||| N(t)|||_{\rho,\delta(t),1} &\lesssim  ||| \omega(t)|||_{\rho,\delta(t),1}  ||| \omega(t)|||_{\rho,\delta(t),2}  \le B(\gamma)^2 (\rho_0-\rho-\gamma t)^{-\zeta}
. \end{aligned}\end{equation}
Now, using the Duhamel integral formula \eqref{Duh-w1}, we estimate 
$$
\begin{aligned}
||| \omega(t) |||_{\rho,\delta(t),k} &\le ||| e^{\nu tB}\omega_{0}|||_{\rho,\delta(t),k} +\int_{0}^{t} ||| e^{\nu(t-s)B} N (s)|||_{\rho,\delta(t),k} \; ds 
\\&\quad + \int_0^t ||| \Gamma(\nu (t-s))(\partial_z \Delta^{-1} N(s))_{\vert_{z=0}}|||_{\rho,\delta(t),k} \;ds.
\end{aligned}
$$
In view of Proposition \ref{prop-Stokes}, the term from the initial data is already estimated, giving $||| e^{\nu tB}\omega_{0}|||_{\rho,\delta(t),k} \le \|\omega_0\|_{\rho,\delta(0),k}$. In addition, using the estimates on $\Gamma(\nu t)$ from Proposition \ref{prop-Stokes} and \eqref{non-est}, we have 
$$
\begin{aligned}
&\int_0^t ||| \Gamma(\nu (t-s))(\partial_z \Delta^{-1} N(s))_{\vert_{z=0}}|||_{\rho,\delta(t),k}ds 
\\&\lesssim \int_0^t \sqrt t(t-s)^{-1/2} \| (\partial_z \Delta^{-1} N(s))_{\vert_{z=0}}\|_{\rho,k} \; ds + \sqrt \nu \int_0^t \| (\partial_z \Delta^{-1} N(s))_{\vert_{z=0}}\|_{\rho,k+1} \; ds
 \\&\lesssim \int_0^t \sqrt t(t-s)^{-1/2}  ||| N(s)|||_{\mathcal{W}^{k,1}_{\rho,\rho}} \; ds + \sqrt \nu \int_0^t ||| N(s)|||_{\mathcal{W}^{k+1,1}_{\rho,\rho}} \; ds
  \\&\lesssim t \sup_{0\le s\le t} ||| N(s)|||_{\mathcal{W}^{k+1,1}_{\rho,\rho}} 
  \end{aligned}$$
in which $\|N(s)\|_{\mathcal{W}^{k+1,1}_{\rho,\rho}}$ is bounded, thanks to the (independent) iteration obtained in the previous subsection. It remains to estimate 
$$\begin{aligned}
&\int_{0}^{t} ||| e^{\nu(t-s)B} N(s)|||_{\rho,\delta(t),1} \; ds 
\\&\le C_0 \int_0^t \sqrt{\frac{t}{s}}||| N(s)|||_{\rho,\delta(s),1} \; ds
\le C_0 B(\gamma)^2 \int_0^t \sqrt{\frac{t}{s}}  (\rho_0-\rho-\gamma s)^{-\zeta}\; ds
\\
&\le C_0 B(\gamma)^2 \Big( \int_0^{t/2}+ \int_{t/2}^t \Big) \sqrt{\frac{t}{s}}  (\rho_0-\rho-\gamma s)^{-\zeta}\; ds
\\
&\le C_0 B(\gamma)^2 \Big( t (\rho_0 - \rho - \frac12 \gamma t)^{-\zeta} + \frac1\gamma (\rho_0-\rho-\frac12\gamma t)^{1-\zeta}\Big)
\\
&\le C_0 \gamma^{-1} B(\gamma)^2 (\rho_0 - \rho)^{-\zeta} ,
\end{aligned}$$
in which we used $\gamma t\le \rho_0$ and $\gamma t < \rho_0 - \rho$. Next, noting that  $\rho < \rho_0 - \gamma t \le \rho_0 - \gamma s$, we take $\rho' = \frac{\rho + \rho_0 - \gamma s}{2}$ and compute 
$$\begin{aligned}
&\int_{0}^{t} ||| e^{\nu(t-s)B} N(s)|||_{\rho,\delta(t),2} \; ds 
\\&\le 
C_0 \int_0^t \sqrt{\frac{t}{s}}||| N(s)|||_{\rho,\delta(s),2} \; ds \le C_0 \int_0^t \sqrt{\frac{t}{s}} \frac{1}{\rho' - \rho}||| N(s)|||_{\rho',\delta(s),1}\; ds
\\&\le C_0 B(\gamma)^2 \int_0^t \sqrt{\frac{t}{s}}  (\rho_0-\rho-\gamma s)^{-1-\zeta}\; ds
\\
&\le C_0 B(\gamma)^2 \Big( \int_0^{t/2}+ \int_{t/2}^t \Big) \sqrt{\frac{t}{s}}  (\rho_0-\rho-\gamma s)^{-1-\zeta}\; ds
\\
&\le C_0 B(\gamma)^2 \Big( t (\rho_0 - \rho - \frac12 \gamma t)^{-1-\zeta} + \frac1\gamma (\rho_0-\rho-\gamma t)^{-\zeta}\Big)
\\
&\le C_0 \gamma^{-1} B(\gamma)^2 (\rho_0-\rho-\gamma t)^{-\zeta}.
\end{aligned}$$
This proves the boundedness of the iterative norm $B(\gamma)$, and hence the propagation of the boundary layer behaviors. Theorem \ref{theo-main} follows. 

\subsection{The inviscid limit}
Let us now prove Theorem \ref{theo-limit}. Since $\omega_0^\nu \in {\cal B}^{\rho_0,\sigma_0,\delta}$, the velocity $u_0^\nu$ and its conormal derivatives $\partial_x^k u_0^\nu, (\psi(z)\partial_z)^j u_0^\nu$, for $k,j,\ge 0$ and $\psi = \frac{z}{1+z}$, are all uniformly bounded on $\TT \times \RR_+$, thanks to the elliptic estimates; see Proposition \ref{inverseLaplace}. In particular, the limit of $u_0^\nu$ exists in the classical sense: $u^0(0) = \lim_{\nu\to 0}u_0^\nu$. From Theorem \ref{theo-main}, we check the validity of  the Kato's condition:
$$
\begin{aligned} 
&\nu \int_0^T\iint_{\TT \times \RR_+} |\nabla u(t,x,z)|^2 \; dxdz dt 
\\& = \nu \int_0^T\iint_{\TT \times \RR_+} |\omega(t,x,z)|^2 \; dxdz dt 
\\&\le C \nu \int_0^T\iint_{\TT \times \RR_+} e^{-2\beta z} \Bigl( 1 + \delta_t^{-1} \phi_{P} (\delta_t^{-1} z)  +  \delta^{-1} \phi_{P} (\delta^{-1} z)  \Bigr)^2\; dxdz dt 
\\&\le C \nu \int_0^T \Bigl( 1 + \delta_t^{-1} +  \delta^{-1} \Bigr) dt \le C_T \sqrt \nu
\end{aligned}$$ 
which tends to zero as $\nu \to 0$. This proves that $\sup_{t\in [0,T]}\| u^\nu(t) - u^0(t)\|_{L^2} \to 0$. The $L^p$ convergence follows from the interpolation between $L^2$ and $L^\infty$ norms and the fact that $u^\nu$ is bounded in $L^\infty$ thanks to the elliptic estimate \eqref{laplace-5}. 

\section{Compliance with Ethical Standards}

{\bf Conflict of Interest:} {\em The authors declare that they have no conflict of interest. This research does not involve Human Participants and/or Animals. The manuscript complies to the Ethical Rules applicable for the Archive for Rational Mechanics and Analysis journal. }

~\\
{\bf Acknowledgement.} {\em The authors are indebted to Claude Bardos and Emmanuel Grenier for their many fruitful discussions. 
The research was supported in part by the NSF under grants DMS-1405728} \& {\em DMS 1764119.}

\bibliographystyle{abbrv}

\begin{thebibliography}{10}

\bibitem{Alex}
R.~Alexandre, Y.-G. Wang, C.-J. Xu, and T.~Yang.
\newblock Well-posedness of the {P}randtl equation in {S}obolev spaces.
\newblock {\em J. Amer. Math. Soc.}, 28(3):745--784, 2015.

\bibitem{Anderson}
C.~R. Anderson.
\newblock Vorticity boundary conditions and boundary vorticity generation for
  two-dimensional viscous incompressible flows.
\newblock {\em J. Comput. Phys.}, 80(1):72--97, 1989.

\bibitem{BT}
C.~W. Bardos and E.~S. Titi.
\newblock Mathematics and turbulence: where do we stand?
\newblock {\em J. Turbul.}, 14(3):42--76, 2013.

\bibitem{Caflisch}
R.~E. Caflisch.
\newblock A simplified version of the abstract {C}auchy-{K}owalewski theorem
  with weak singularities.
\newblock {\em Bull. Amer. Math. Soc. (N.S.)}, 23(2):495--500, 1990.

\bibitem{ConstVV}
P.~Constantin, T.~Elgindi, M.~Ignatova, and V.~Vicol.
\newblock Remarks on the inviscid limit for the {N}avier-{S}tokes equations for
  uniformly bounded velocity fields.
\newblock {\em SIAM J. Math. Anal.}, 49(3):1932--1946, 2017.

\bibitem{CKV}
P.~Constantin, I.~Kukavica, and V.~Vicol.
\newblock On the inviscid limit of the {N}avier-{S}tokes equations.
\newblock {\em Proc. Amer. Math. Soc.}, 143(7):3075--3090, 2015.

\bibitem{ConVV17}
P.~Constantin and V.~Vicol.
\newblock Remarks on high reynolds numbers hydrodynamics and the inviscid
  limit.
\newblock {\em  Journal of Nonlinear Science} 28(2): 711-724, 2018.

\bibitem{GVDormy}
D.~G{\'e}rard-Varet and E.~Dormy.
\newblock On the ill-posedness of the {P}randtl equation.
\newblock {\em J. Amer. Math. Soc.}, 23(2):591--609, 2010.

\bibitem{Rugo}
D.~Gerard-Varet, C.~Lacave, T.~T. Nguyen, and F.~Rousset.
\newblock The vanishing viscosity limit for 2d navier-stokes in a rough domain.
\newblock {\em J. Math. Pures et Appliqu\'ees, to appear.}, 2017.

\bibitem{GVMM}
D.~Gerard-Varet, Y.~Maekawa, and N.~Masmoudi.
\newblock Gevrey stability of Prandtl expansions for 2d Navier-Stokes.
\newblock {\em Preprint 2016. arXiv:1607.06434}.

\bibitem{GVMasmoudi15}
D.~Gerard-Varet and N.~Masmoudi.
\newblock Well-posedness for the {P}randtl system without analyticity or
  monotonicity.
\newblock {\em Ann. Sci. \'Ec. Norm. Sup\'er. (4)}, 48(6):1273--1325, 2015.

\bibitem{GVN1}
D.~G\'erard-Varet and T.~T. Nguyen.
\newblock Remarks on the ill-posedness of the {P}randtl equation.
\newblock {\em Asymptot. Anal.}, 77(1-2):71--88, 2012.

\bibitem{Maz5}
G.-M. Gie, J.~Kelliher, M.~C. Lopes~Filho, A.~Mazzucato, and H.~J.
  Nussenzveig~Lopes.
\newblock The vanishing viscosity limit for some symmetric flows.
\newblock {\em arXiv preprint, 2017}.

\bibitem{Grenier00CPAM}
E.~Grenier.
\newblock On the nonlinear instability of {E}uler and {P}randtl equations.
\newblock {\em Comm. Pure Appl. Math.}, 53(9):1067--1091, 2000.

\bibitem{GrN4}
E.~Grenier and T.~T. Nguyen.
\newblock Green function for linearized Navier-Stokes around a boundary layer
  profile: near critical layers.
\newblock {\em arXiv:1705.05323.}, 2017.

\bibitem{GrN5}
E.~Grenier and T.~T. Nguyen.
\newblock On nonlinear instability of {P}randtl's boundary layers: the case of
  {R}ayleigh's stable shear flows.
\newblock {\em arXiv:1706.01282.}, 2017.

\bibitem{GrN2}
E.~Grenier and T.~T. Nguyen.
\newblock Sharp bounds on linear semigroup of Navier-Stokes with boundary layer
  norms.
\newblock {\em arXiv:1703.00881.}, 2017.

\bibitem{GrNbook}
E.~Grenier and T.~T. Nguyen.
\newblock {\em The stability of {P}randtl's boundary layers}.
\newblock 2017.
\newblock In preparation.

\bibitem{GrN3}
E.~Grenier and T.~T. Nguyen.
\newblock Sublayer of prandtl boundary layers.
\newblock {\em Arch. Ration. Mech. Anal.}, to appear 2018.

\bibitem{GN1}
Y.~Guo and T.~T. Nguyen.
\newblock A note on {P}randtl boundary layers.
\newblock {\em Comm. Pure Appl. Math.}, 64(10):1416--1438, 2011.

\bibitem{IftimieSueur}
D.~Iftimie and F.~Sueur.
\newblock Viscous boundary layers for the {N}avier-{S}tokes equations with the
  {N}avier slip conditions.
\newblock {\em Arch. Ration. Mech. Anal.}, 199(1):145--175, 2011.

\bibitem{VV}
M.~Ignatova and V.~Vicol.
\newblock Almost global existence for the {P}randtl boundary layer equations.
\newblock {\em Arch. Ration. Mech. Anal.}, 220(2):809--848, 2016.

\bibitem{Kato}
T.~Kato.
\newblock Remarks on zero viscosity limit for nonstationary {N}avier-{S}tokes
  flows with boundary.
\newblock In {\em Seminar on nonlinear partial differential equations
  ({B}erkeley, {C}alif., 1983)}, volume~2 of {\em Math. Sci. Res. Inst. Publ.},
  pages 85--98. Springer, New York, 1984.

\bibitem{Kel}
J.~P. Kelliher.
\newblock On {K}ato's conditions for vanishing viscosity.
\newblock {\em Indiana Univ. Math. J.}, 56(4):1711--1721, 2007.

\bibitem{Maekawa1}
Y.~Maekawa.
\newblock Solution formula for the vorticity equations in the half plane with
  application to high vorticity creation at zero viscosity limit.
\newblock {\em Adv. Differential Equations}, 18(1-2):101--146, 2013.

\bibitem{Maekawa2}
Y.~Maekawa.
\newblock On the inviscid limit problem of the vorticity equations for viscous
  incompressible flows in the half-plane.
\newblock {\em Comm. Pure Appl. Math.}, 67(7):1045--1128, 2014.

\bibitem{MaeM}
Y.~Maekawa and A.~Mazzucato.
\newblock The inviscid limit and boundary layers for navier-stokes flows.
\newblock In {\em Handbook of Mathematical Analysis in Mechanics of Viscous
  Fluids}, pages 1--48. Springer, 2017.
\newblock Edited by Antonin Novotny and Yoshikazu Giga.

\bibitem{MasRou1}
N.~Masmoudi and F.~Rousset.
\newblock Uniform regularity for the {N}avier-{S}tokes equation with {N}avier
  boundary condition.
\newblock {\em Arch. Ration. Mech. Anal.}, 203(2):529--575, 2012.

\bibitem{MW}
N.~Masmoudi and T.~K. Wong.
\newblock Local-in-time existence and uniqueness of solutions to the {P}randtl
  equations by energy methods.
\newblock {\em Comm. Pure Appl. Math.}, 68(10):1683--1741, 2015.

\bibitem{Maz1}
A.~Mazzucato and M.~Taylor.
\newblock Vanishing viscosity plane parallel channel flow and related singular
  perturbation problems.
\newblock {\em Anal. PDE}, 1(1):35--93, 2008.

\bibitem{Oleinik}
O.~A. Ole\u\i{n}ik.
\newblock On the mathematical theory of boundary layer for an unsteady flow of
  incompressible fluid.
\newblock {\em J. Appl. Math. Mech.}, 30:951--974 (1967), 1966.

\bibitem{Oleinikbook}
O.~A. Ole\u\i{n}ik and V.~N. Samokhin.
\newblock {\em Mathematical models in boundary layer theory}, volume~15 of {\em
  Applied Mathematics and Mathematical Computation}.
\newblock Chapman \& Hall/CRC, Boca Raton, FL, 1999.

\bibitem{Pra:1904}
L.~Prandtl.
\newblock Uber fl\"ussigkeits-bewegung bei sehr kleiner reibung.
\newblock pages 484--491, 1904.

\bibitem{Safanov}
M.~V. Safonov.
\newblock The abstract {C}auchy-{K}ovalevskaya theorem in a weighted {B}anach
  space.
\newblock {\em Comm. Pure Appl. Math.}, 48(6):629--637, 1995.

\bibitem{SammartinoCaflisch1}
M.~Sammartino and R.~E. Caflisch.
\newblock Zero viscosity limit for analytic solutions, of the {N}avier-{S}tokes
  equation on a half-space. {I}. {E}xistence for {E}uler and {P}randtl
  equations.
\newblock {\em Comm. Math. Phys.}, 192(2):433--461, 1998.

\bibitem{SammartinoCaflisch2}
M.~Sammartino and R.~E. Caflisch.
\newblock Zero viscosity limit for analytic solutions of the {N}avier-{S}tokes
  equation on a half-space. {II}. {C}onstruction of the {N}avier-{S}tokes
  solution.
\newblock {\em Comm. Math. Phys.}, 192(2):463--491, 1998.

\bibitem{Wang}
C.~Wang, Y.~Wang, and Z.~Zhang.
\newblock Zero-viscosity limit of the {N}avier-{S}tokes equations in the
  analytic setting.
\newblock {\em Arch. Ration. Mech. Anal.}, 224(2):555--595, 2017.

\end{thebibliography}

\def\cprime{$'$} \def\cprime{$'$}

\end{document}